\documentclass[11pt]{article}

\usepackage[letterpaper,margin=1in]{geometry}
\usepackage{setspace}
\usepackage{amsmath,amsfonts,amssymb,amsthm}
\usepackage{mathtools}
\allowdisplaybreaks
\usepackage{bm}
\usepackage[inline]{enumitem}
\usepackage[table]{xcolor}
\usepackage{graphicx}
\usepackage{multirow}
\usepackage{booktabs}
\usepackage{microtype}
\usepackage{algorithm}
\usepackage{algpseudocode}
\usepackage{tikz}
\usepackage{natbib}
\usepackage{bibunits}
\usepackage[colorlinks=true,linkcolor=blue,citecolor=blue,urlcolor=blue]{hyperref}
\hypersetup{
  pdftitle={From Frequentist to Bayesian Contextual Optimization},
  pdfauthor={Zhuojun Xie, Adam Abdin, and Yiping Fang},
  pdfkeywords={Prescriptive Analytics; Contextual Stochastic Optimization; Decision-Focused Learning; Generalized Bayesian Inference; Gibbs Posterior}
}

\bibpunct[, ]{(}{)}{,}{a}{}{,}
\newtheorem{theorem}{Theorem}
\newtheorem{lemma}{Lemma}
\newtheorem{proposition}{Proposition}
\newtheorem{corollary}{Corollary}
\newtheorem{assumption}{Assumption}
\newtheorem{condition}{Condition}
\theoremstyle{definition}
\newtheorem{definition}{Definition}
\newtheorem{example}{Example}
\theoremstyle{remark}
\newtheorem{remark}{Remark}

\makeatletter
\renewenvironment{proof}[1]{%
  \par\pushQED{\qed}\normalfont
  \topsep6\p@\@plus6\p@\relax
  \trivlist
  \item[\hskip\labelsep\itshape #1\@addpunct{.}]\ignorespaces
}{%
  \popQED\endtrivlist\@endpefalse
}
\makeatother
\newcommand{\Halmos}{}
\newcommand{\halmos}{}

\newcommand{\SingleSpacedXI}{\singlespacing}
\newcommand{\OneAndAHalfSpacedXI}{\onehalfspacing}
\newcommand{\ECHead}[1]{%
  \begin{center}\Large\bfseries #1\end{center}\medskip
}

\newcommand{\R}{\mathbb{R}}
\newcommand{\E}{\mathbb{E}}
\newcommand{\W}{\mathcal{W}}
\renewcommand{\P}{\mathbb{P}}

\newcommand{\Q}{\mathbb{Q}}
\newcommand{\Gibbs}{\rho_{n}}

\newcommand{\lra}[1]{\left( #1 \right)}
\newcommand{\lrb}[1]{\left[ #1 \right]}
\newcommand{\lrc}[1]{\left\{ #1 \right\}}

\newcommand{\norm}[1]{\left\lVert #1 \right\rVert}
\newcommand{\abs}[1]{\left\lvert #1 \right\rvert}
\newcommand{\KL}[2]{\mathbb{D}_{\mathrm{KL}}\lra{#1 \, \| \, #2}}
\newcommand{\bcohl}[1]{\cellcolor{gray!15}{#1}}
\DeclareMathOperator*{\argmin}{arg\,min}
\DeclareMathOperator*{\argmax}{arg\,max}

\title{From Frequentist to Bayesian Contextual Optimization}
\author{%
  Zhuojun Xie \quad Adam Abdin \quad Yiping Fang\thanks{Corresponding author.}\\[0.4em]
  \small Laboratoire G\'enie Industriel, CentraleSup\'elec, Universit\'e Paris-Saclay, France\\[-0.1em]
  \small \texttt{xie.zhuojun@centralesupelec.fr} \quad
  \texttt{adam.abdin@centralesupelec.fr}\\[-0.1em]
  \small \texttt{yiping.fang@centralesupelec.fr}
}
\date{}

\begin{document}
\maketitle

\begin{abstract}
In data-driven contextual stochastic optimization, existing approaches are predominantly frequentist: they commit to a single predictive model and treat it as ground truth, yielding prescriptions that are fragile to model uncertainty and sampling variability in small- and moderate-sample regimes.
We propose Bayesian contextual optimization (BCO), a framework that maintains a Gibbs posterior over the parameter space. This decision-focused posterior weights candidate models by their empirical decision quality rather than statistical fit, thereby avoiding commitment to a potentially misspecified likelihood while encoding the full distribution of plausible models consistent with the data.
A Bayesian contextual policy is then derived by minimizing the expected decision cost under the posterior predictive distribution, hedging prescriptions against model uncertainty by aggregating over the posterior.
We establish three theoretical guarantees: (i) the Gibbs posterior concentrates exponentially fast around the frequentist best-in-class parameter set; (ii) BCO can strictly improve over the frequentist best-in-class policy when model uncertainty is non-negligible; and (iii) BCO attains an $O(n^{-1/2})$ excess risk rate against the oracle over all probability measures up to a misspecification term and an oracle aggregate gap.
Computationally, we tailor a variational inference scheme that has the same per-iteration cost as frequentist alternatives and a gradient-free Metropolis--Hastings algorithm that handles nondifferentiable problems.
Numerical experiments on two-stage shipment planning, contextual newsvendor, and return-constrained portfolio problems confirm that BCO consistently reduces out-of-sample cost and variance relative to kernel estimators and decision-focused baselines, with the most pronounced gains in small- and moderate-sample regimes under substantial model uncertainty.
\end{abstract}

\noindent\textbf{Keywords:} Prescriptive Analytics; Contextual Stochastic Optimization; Decision-Focused Learning; Generalized Bayesian Inference; Gibbs Posterior

\begin{bibunit}[plainnat]
\section{Introduction}\label{sec:1}
Data-driven contextual stochastic optimization (CSO)~\citep{Bertsimas-2020-MS} prescribes decisions under uncertainty by exploiting the statistical relationship between observable context and uncertain outcomes. 
Two complementary paradigms have emerged: decision-rule optimization~\citep{Qi-2023-MS}, which directly learns a context-to-decision mapping, and estimate-then-optimize (ETO), which first constructs a predictive model of the uncertainty and then optimizes against it~\citep{EJORReview}. Both paradigms offer frameworks that demonstrate strong empirical performance on large samples and provide out-of-sample guarantees~\citep{Bertsimas-2020-MS, Ban-2019-OR}. This paper focuses on ETO, which supports explicit uncertainty modeling, enables risk-averse objectives, and naturally accommodates uncertain constraints.

Within ETO, the emerging \emph{integrated estimate-optimize}~\citep[IEO]{Elmachtoub-2023-ARXIV}, also referred to as \emph{decision-focused learning} (DFL) ~\citep{DFLreview} or \emph{smart predict-then-optimize} (SPO)~\citep{SPO}, replaces conventional statistical losses with downstream decision objectives, thereby improving consistency between the estimation and optimization~\citep{Ho-Nguyen-2022-MS}. Yet, a fundamental statistical limitation persists across all ETO variants, including IEO: existing frameworks are predominantly frequentist, treating the fitted predictive model as the truth once trained. As such, there is no principled mechanism for quantifying uncertainty about which estimated model best represents the data-generating process. As we show in Figure~\ref{fig:intro}, this limitation has significant implications for decision-making in finite-sample regimes that arise routinely in OR/MS practice.

The figure illustrates two concrete consequences of the frequentist ETO paradigms. The first is \emph{decision instability}: because frequentist estimators are random functions of the observed sample, they can deviate markedly from the population risk minimizer in small-to-moderate sample regimes~\citep{Spokoiny-2012-AOS}. For example, at sample size $n \in \lrc{256,\,512,\, 1024}$, the optimality gaps of IEO, a decision-focused method that trains the predictor by minimizing downstream decision errors, vary substantially across independent training sets. While regularization and cross-validation can reduce this variability, they offer no principled mechanism for propagating the remaining estimation uncertainty into downstream decisions. 
The second consequence is the \emph{neglect of model uncertainty}. By committing to a single fitted model, current ETO methods, ranging from nonparametric kernel methods to distributional estimators, discard the full set of models plausible under the data. Models not selected due to empirical suboptimality may still carry prescriptive information, and ignoring them prevents the induced policy from reaching its full potential. In Figure~\ref{fig:intro}, even at $n = 4096$, IEO exhibits a higher mean cost and greater variability than the Bayesian framework we propose. 
Beyond their frequentist nature, existing IEO methods are heavily tailored to specific problem classes. Incorporating decision feedback into the learning pipeline introduces problem-specific challenges~\citep{DFLreview}, confining their applicability to a narrow subset of OR/MS problems.

\begin{figure}[t]
    \centering
    \caption{A Comparison of Frequentist IEO and BCO in a Contextual Newsvendor Problem}\label{fig:intro}
    \includegraphics[width=1.\linewidth]{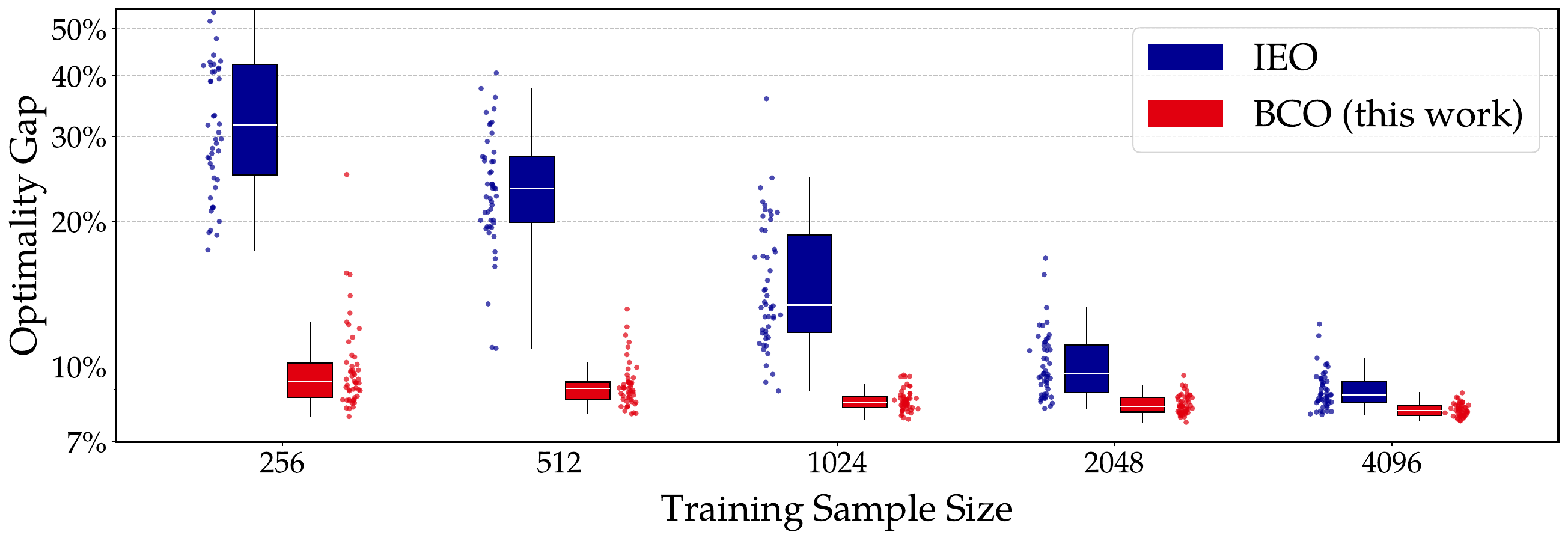}
    \begin{minipage}{\textwidth}
    \vskip 0.1cm
    \small
    \textit{Notes}. Data-generating process follows the second model in Section~\ref{sec:6_2} with $\tau = 0.75$. Optimality gap computed on a $2^{15}$-sample test set. Predictive model: fully-connected network with 8-dimensional input, two hidden layers of width 128 (ReLU), scalar output. \textbf{IEO}: integrated estimate-optimize (equivalent to decision-rule optimization here); \textbf{BCO}: Bayesian contextual optimization proposed in this paper. Results from 50 independent runs per sample size; each run uses a validation set for early stopping and model selection. See Section~\ref{sec:6_2} for details.
    \end{minipage}
\end{figure}


To bridge the gaps, we propose \emph{Bayesian contextual optimization} (BCO) for \emph{offline} data-driven CSO. 
Throughout this paper, \emph{model uncertainty} refers to the epistemic uncertainty about which parameter of the predictive model incurs the lowest population decision risk.
Rather than committing to a single estimator, BCO maintains a belief distribution over the parameter space, updated according to an \emph{operational likelihood} (see Equation~\eqref{oplike}) that scores each candidate model by the empirical decision quality incurred by its predictions.
The resulting posterior, known as the \textit{Gibbs posterior}~\citep{Wenxin-2008-AS}, reflects model uncertainty through the lens of decision quality rather than statistical fit, and therefore mitigates the impact of model misspecification. This construction unifies DFL and Bayesian statistics within a data-driven CSO framework. 

Building on this foundation, we study the theoretical properties and practical implementations of BCO. A structural advantage of the Gibbs posterior is that its density is fully specified by pointwise evaluation of the decision loss, requiring no problem-specific gradient or optimality conditions. Standard Bayesian inference routines can therefore be applied directly across the breadth of OR/MS problem classes, resolving the applicability  gap that limits existing decision-focused frameworks.

Our contributions are summarized as follows.
\begin{itemize}
    \item We propose a BCO framework built on a decision-focused Gibbs posterior: a data- and problem-dependent belief over the parameter space, updated in proportion to empirical decision quality rather than statistical likelihood. To the best of our knowledge, BCO is the first data-driven CSO framework that simultaneously \textup{(i)} quantifies model uncertainty through a probabilistic belief; \textup{(ii)} aligns belief updating with the downstream decision objective; and \textup{(iii)} hedges decisions against model uncertainty.
    
    \item We establish nonasymptotic concentration of the Gibbs posterior around the best-in-class parameter set, with an exponential rate in sample size under mild assumptions (Theorem~\ref{thm:1}), confirming that uncertainty quantification in finite-sample regimes does not sacrifice asymptotic consistency. Corollaries~\ref{cor:1} and~\ref{cor:2} instantiate this result for canonical OR/MS problems, e.g., linear programs, mixed-integer linear programs, newsvendor, and mean-variance portfolio models, with explicit, computable concentration rates. 
    
    \item Aligned with Bayes decision theory, we derive a Bayesian contextual policy (BCP) that minimizes the expected cost under the posterior predictive distribution, so model uncertainty propagates into the objective function against which decisions are selected. Two nonasymptotic out-of-sample guarantees are established for this policy. The first (Proposition~\ref{prop:1}) decomposes a Bayesian–frequentist gap into three interpretable factors (model misspecification, model uncertainty, and posterior concentration), and shows BCP can strictly outperform the frequentist best-in-class policy when model uncertainty is nonnegligible. The second (Theorem~\ref{thm:2}) establishes an oracle-type bound with an $O(n^{-1/2})$ excess risk rate against the oracle measure over \emph{all} probability measures up to a misspecification term and an oracle aggregate gap.

    \item We demonstrate that BCO is readily implementable with standard Bayesian inference algorithms for three OR/MS problems. Variational inference applies in differentiable settings with computational costs comparable to frequentist decision-focused alternatives, while gradient-free Metropolis–Hastings sampling extends BCO to nondifferentiable problems. Numerical experiments on two-stage shipment planning, contextual newsvendor, and return-constrained portfolio problems corroborate that BCO improves out-of-sample cost and decision stability relative to kernel-estimator and frequentist decision-focused baselines, with the most pronounced gains in small- and moderate-sample regimes under substantial model uncertainty.
    
\end{itemize}

\subsection{Related Literature}\label{sec:review}
We review three strands of the CSO literature most relevant to our framework: (i) statistical and machine learning approaches for conditional uncertainty estimation, (ii) Bayesian frameworks for decision-making under uncertainty, and (iii) DFL methods that incorporate optimization feedback. BCO sits at the intersection of all three; the review below identifies each strand's contributions and the gaps that motivate our framework. For a comprehensive overview of data-driven CSO, we refer readers to the survey by~\cite{EJORReview}.

\cite{Bertsimas-2020-MS} introduced the foundational idea of using local machine learning methods, including $k$-nearest neighbors ($k$NN), Nadaraya-Watson estimators, trees, and random forests, to estimate conditional distributions for unseen scenarios by exploiting contextual affinity between new and historical observations.
These local methods were subsequently extended to problems with uncertainty in the constraints~\citep{Rahimian-2023-SIAMOPT}.
Several remedies sharpen local methods in small-sample regimes: globally optimal weighting via a reproducing kernel Hilbert space approach~\citep{Bertsimas-2022-OR}, distributionally robust bootstrap~\citep{Bertsimas-2022-MP}, and variance-regularized formulations~\citep{Wang-2026-MSOM}.


A structural limitation shared by all nonparametric methods is their vulnerability to the curse of dimensionality: sample complexity grows rapidly with the dimension of the context, hampering applications with high-dimensional features~\citep{Wainwright-2019-Book}.
To circumvent this,~\cite{Kannan-2025-OR} proposed a residual-based approach that assumes a context-dependent heteroscedastic Gaussian noise structure, approximating the conditional distribution by scaling historical residuals of a fitted parametric regressor with a jackknife bias correction.~\cite{Sim-2025-OR} further combined residual-based estimation with a robust satisficing model to address decision \emph{fragility} under distributional ambiguity.
Despite their technical diversity, these approaches share two limitations that restrict prescriptive performance: overlooking model uncertainty~\citep{Rockova-2020-AOS} and decoupling from optimization~\citep{SPO}. Our framework addresses both limitations simultaneously through a decision-focused belief and a Bayesian aggregate policy.

Bayesian methodologies remain underutilized in data-driven CSO, primarily because of the tension between parametric density specification and the data-driven principle, and in part because of computational difficulties.
Belief updating under Bayes' rule requires explicit specification of a likelihood function, or at least a summary statistic.
To see this, consider the example of Bayesian inventory management, where parametric densities such as normal, lognormal, and Weibull are often assumed~\citep{Azoury-2009-MS, Harrison-2012-MS, Luo-2023-MSOM} to facilitate tractable belief updating.
Beyond inventory,~\cite{Wu-2018-SIAMOPT} studied the theoretical properties of Bayesian posteriors in data-driven optimization with general objective functions under various risk functionals, assuming the likelihood is well-specified; that is, the true data-generating distribution lies within the assumed likelihood family (the M-closed assumption).~\cite{Shapiro-2023-SIAMOPT} relaxed this M-closed assumption with a distributionally robust formulation whose ambiguity set is defined via Kullback–Leibler (KL) divergence to address model uncertainty under misspecification.

Manual specification of a likelihood is difficult to justify with finite samples in a purely data-driven scenario; under likelihood misspecification, the consequent posterior can yield unreliable conclusions~\citep{Nott-2023-ARSA}.
BCO addresses this limitation by substituting a parametric likelihood with an operational likelihood, analogous to the generalized Bayesian updating principle of~\cite{Bissiri-2016-JRSSB}. 
The resulting Gibbs posterior captures model uncertainty through prescriptive performance rather than statistical fit and is therefore robust to misspecification. 
As a special case, when the downstream problem is the classical newsvendor, our framework recovers Bayesian inference for the conditional $\tau$-quantile (critical fractile) under the asymmetric Laplace likelihood, connecting BCO to the classical Bayesian inventory and statistical literature.

A number of decision-focused frameworks have been proposed for data-driven CSO to incorporate downstream optimization feedback into the learning process. For contextual linear programs,~\cite{SPO} introduced the \texttt{SPO+} loss, a surrogate of decision regret, to train point-prediction models and established its advantage under considerable model misspecification.~\cite{Sun-2023-ICML} exploited the optimality margin of a linear program to learn a point predictor via inverse optimization;~\cite{Dias-2025-OR} leveraged the Karush–Kuhn–Tucker (KKT) conditions to learn an application-oriented forecaster for two-stage linear programs with recourse. Beyond linear programs,~\cite{Kallus-2023-MS} developed a decision-focused splitting criterion for random forests with a perturbation-based approximation to manage computational intractability;~\cite{Rafaela-2026-EJOR} further tailored decision-focused tree construction to two-stage linear programs.
On the parametric distributional side,~\cite{Qi-2025-OR} proposed an integrated estimate-optimize framework for problems with convex objective functions, constructing the conditional distribution estimator over a discrete support by directly minimizing decision errors. 

Most of this work is tailored to specific problem classes. Furthermore, none of these frameworks quantify finite-sample model uncertainty.
Our framework closes both gaps: the decision-focused posterior quantifies model uncertainty, and the induced policy propagates this uncertainty into the decision objective.
In addition, the Gibbs posterior depends on the data only through pointwise evaluations of the empirical decision risk.
This pointwise dependence further enables gradient-free MCMC samplers, extending DFL to nondifferentiable problems for which existing gradient-based and optimality-based methods cannot apply.

The remainder of the paper is organized as follows. Section~\ref{sec:2} introduces notation and the problem setting. Section~\ref{sec:3} presents the main components of BCO. Section~\ref{sec:4} studies its theoretical properties, including posterior concentration and nonasymptotic guarantees for our policy. Section~\ref{sec:5} develops approximate inference algorithms for the Gibbs posterior. Section~\ref{sec:6} reports numerical experiments on three OR/MS problems. Section~\ref{sec:7} concludes.
\section{Preliminaries}\label{sec:2}

\subsection{Contextual Stochastic Optimization}
Throughout this paper, let $\mathcal{X} \subseteq \R^{d_x}$ and $\mathcal{Y} \subseteq \R^{d_y}$ be Borel sets equipped with the Borel $\sigma$-algebras $\mathcal{B}(\mathcal{X})$ and $\mathcal{B}(\mathcal{Y})$ induced by their subspace topologies, and let $\mathcal{P}(\mathcal{Y})$ be the set of all probability measures on $(\mathcal{Y}, \mathcal{B}(\mathcal{Y}))$. The contextual information and uncertainty of interest are represented by random variables $X$ and $Y$, supported on $\mathcal{X}$ and $\mathcal{Y}$, respectively, with joint distribution $\P\in \mathcal{P}(\mathcal{X} \times \mathcal{Y})$. Since $\mathcal{X}$ and $\mathcal{Y}$ are standard Borel spaces, a regular conditional distribution $\P_{Y \mid X}$ exists, with $\P_{Y \mid X = x }\in \mathcal{P}(\mathcal{Y}) $ for $\P_X$-a.e. $x\in\mathcal{X}$; here $\P_X$ denotes the marginal distribution of $X$. Unless explicitly stated otherwise, every probability measure considered in this paper is assumed to have a finite first moment, and we use $\mathcal{W}_1(\mu_1, \mu_2)$ to denote the 1-Wasserstein distance between two probability measures $\mu_1$ and $\mu_2$.

The decision-maker observes the context $X=x$ prior to choosing a decision $z\in \mathcal{Z}$, where $\mathcal{Z} \subseteq \R^{d_z}$, while $Y$ is unobserved at decision time. For each $y\in\mathcal{Y}$, the objective function $c(\cdot;y): \mathcal{Z} \rightarrow \R$ is the cost incurred when $Y = y$. In the main analysis, we assume $Y$ only affects the objective, with $\mathcal{Z}$ fixed and independent of $Y$. In Section~\ref{sec:6_3}, we provide an empirical extension of our framework to problems with uncertainty in the constraints.

To leverage the statistical dependence between $X$ and $Y$ captured by $\P_{Y \mid X=x}$, CSO prescribes decisions by solving the following conditional stochastic optimization for each $x\in\mathcal{X}$~\citep{Bertsimas-2020-MS}:
\begin{equation}
    \pi^\star(x) \in \argmin_{z \in \mathcal{Z}} \,\mathbb{E}_{\P_{Y|X}}[c(z;Y) \mid X = x], \label{CSO}
\end{equation} 
where the expectation is taken with respect to the nominal conditional distribution $\P_{Y|X}$, hereafter referred to as the \textit{conditional}. Problem \eqref{CSO} prescribes the optimal decision for every realization $x\in\mathcal{X}$, yielding an optimal policy $\pi^\star:\mathcal{X}\mapsto \mathcal{Z}$.
We defer assumptions on the geometry of $\mathcal{X}, \, \mathcal{Y}$, and $\mathcal{Z}$ and the regularity of the function $c$ to Section~\ref{sec:4}.

\subsection{Data-Driven Frequentist Frameworks}\label{sec22}
In practice, the joint distribution $\P$ is unknown to the decision-maker, who instead has access to a logged dataset $\mathcal{D}_n = \lrc{ \lra{x_i, y_i} }^n_{i=1}$ of $n$ observations drawn from $\P$. The ultimate goal of data-driven CSO is to construct a data-driven policy $\pi$ that performs well out-of-sample using only $\mathcal{D}_n$. 
In this work, we adopt the parametric ETO approach. Compared to decision-rule optimization and nonparametric approaches, this approach ensures feasibility in constrained problems, offers flexibility for DFL, accommodates high-dimensional features, and integrates readily with Bayesian modeling. 

In parametric ETO, the decision-maker specifies a parametric model $m(\cdot;\theta):\mathcal{X}\mapsto \mathcal{P}(\mathcal{Y})$, drawn from a model class $\mathcal{M}\coloneqq \lrc{m(\cdot;\theta):\mathcal{X} \rightarrow \mathcal{P}(\mathcal{Y}) \mid \theta \in \Theta \subseteq \R^d}$, to approximate the conditional $\P_{Y|X}$. The codomain of $\mathcal{M}$ is defined as $\mathcal{P}(\mathcal{Y})$ rather than $\mathcal{Y}$ to include both point predictors, which produce Dirac measures on $\mathcal{B}(\mathcal{Y})$, and distributional estimators.

Given a parameter vector $\theta \in \Theta$, the frequentist ETO derives a policy for all $x$:
\begin{equation}
     \pi_\theta(x) \in \argmin_{z \in \mathcal{Z}} V(z, m(x;\theta)) = \argmin_{z \in \mathcal{Z}} \mathbb{E}_{m(x;\theta)}\lrb{ c(z;Y) },\label{freq:policy}
\end{equation}
where $V(z,m(x;\theta))$ is a function of the decision $z$ and the probability measure $m(x;\theta)$, specified as the expectation in this work. Note that although the original target of CSO is to minimize the expectation as defined in Problem~\eqref{CSO}, data-driven policies may incorporate additional regularization because of estimation errors. Throughout this work, all Bayesian and frequentist policies $\pi$ are assumed to apply a measurable selection, e.g., the lexicographical minimum, if multiple optimal decisions exist for the objective $V$.

The prescriptive performance of $\pi_\theta$ depends critically on the conditional estimate $m(x;\theta)$, and hence on the choice of $\theta$. Within ETO, two popular strategies have been studied for inferring $\theta$ from finite samples in a frequentist manner: maximum likelihood estimation (MLE) and integrated estimate-optimize (IEO). Both can be formulated as a regularized empirical risk minimization problem:
\begin{equation}
    \hat{\theta}_{M} \coloneqq \argmin_{\theta \in \Theta} \; \frac{1}{n}\sum_{i=1}^n \ell_M(y_i, m(x_i;\theta)) + \gamma \Vert \theta \Vert, \quad M \in \{\text{mle}, \text{ieo}\},
    \label{erm-g}
\end{equation}
where $\gamma \geq 0$ is the regularization strength, and the loss functions $\ell_M$ are defined as:
\begin{equation*}
    \begin{aligned}
        \ell_{\text{mle}}(y_i,m(x_i;\theta)) &= -\log f_Y(y_i \mid x_i,\theta), \\
        \ell_{\text{ieo}}(y_i,m(x_i;\theta)) &= -\log e^{-c(\pi_\theta(x_i);y_i)} = c\lra{\pi_\theta(x_i);y_i},
    \end{aligned}
\end{equation*}
where $f_Y(\cdot \mid x,\theta)$ denotes the density function of the parametric model $m(x;\theta)$. The MLE loss $\ell_{\text{mle}}$ measures the statistical fit of the model to the observed data, while the IEO loss $\ell_{\text{ieo}}$ directly measures the decision cost incurred by $\theta$ under the realized uncertainty $y_i$.
$\ell_\mathrm{ieo}$ is deliberately written in a negative-log form to mirror $\ell_\mathrm{mle}$: the realized decision cost can act as a log-density and will constitute our operational likelihood in Section~\ref{sec:31}.

While the IEO approach has been shown to be superior to MLE under model misspecification for generic optimization problems~\citep{Elmachtoub-2023-ARXIV, Elmachtoub-2025-AISTATS}, it inevitably introduces computational difficulties due to the bilevel structure of Problem~\eqref{erm-g}.~\cite{DFLreview} reviewed recent solution strategies for IEO.

\subsection{Bayesian Inference and Model Uncertainty}\label{sec23}

Both MLE and IEO produce a single point estimate of $\theta$, discarding uncertainty over the model parameter space.
We now recall the conventional Bayesian framework, which maintains a belief distribution over $\Theta$ and motivates the development of our BCO in Section~\ref{sec:3}. Within a prescribed parametric family $\mathcal M$, each parameter value $\theta$ indexes a candidate model. Hence, a probability measure over $\Theta$ induces a probability measure over the corresponding models in $\mathcal M$. In this sense, Bayesian parameter uncertainty provides a tractable within-family representation of model uncertainty if the model class $\mathcal{M}$ is sufficiently expressive, e.g., deep neural networks.

Formally, let $\mathcal P(\Theta)$ be the set of all probability measures on the measurable parameter space $(\Theta, \mathcal{B}(\Theta))$. Bayesian frameworks represent the decision-maker's belief on the unknown parameter through a prior measure $\rho_0\in\mathcal P(\Theta)$. Upon observing $\mathcal D_n$, this belief is updated via Bayes' rule:
\begin{equation}
        \rho_n(\text{d} \theta) = \frac{L(\mathcal{D}_n\mid \theta)\rho_0(\text{d}\theta)}{\int_{\Theta} L(\mathcal{D}_n\mid \nu)\rho_0(\text{d}\nu)} \propto L(\mathcal{D}_n\mid \theta)\rho_0(\text{d}\theta). \label{bayes}
\end{equation}
Given that $\mathcal{D}_n$ consists of i.i.d. observations and the parametric density $f_Y(\cdot \mid x,\theta)$ of model $m$, the likelihood $L(\mathcal{D}_n \mid \theta)$ decomposes as:
\begin{equation}
    L(\mathcal{D}_n \mid \theta) \coloneqq \prod_{i=1}^{n} f_Y(y_i \mid x_i,\theta).
    \label{likelihood}
\end{equation}
Conjugate priors with respect to the parametric densities are commonly adopted to derive a tractable closed-form posterior.
When no closed-form posterior exists, approximate Bayesian inference techniques may be employed; see~\cite{Chen-2020-OR} and references therein for applications in OR/MS.

\section{Bayesian Contextual Optimization Framework}\label{sec:3}

\subsection{Decision-Focused Belief Updating}\label{sec:31}
To address model misspecification in data-driven CSO while accounting for model uncertainty overlooked by frequentist approaches, we propose a decision-focused belief updating mechanism, which is a Bayesian generalization of the frequentist IEO approach.

We begin with the definition of the frequentist empirical decision risk associated with a policy $\pi_\theta$:
\begin{equation}
    R_n(\theta) \coloneqq \frac{1}{n} \sum_{i=1}^{n} c\lra{\pi_\theta(x_i);y_i}, \label{emprisk}
\end{equation}
and we denote by $R(\theta)$ its population risk. Throughout this belief-updating procedure, we adopt the policy $\pi_\theta$ defined in Equation~\eqref{freq:policy} which degenerates to a deterministic problem when $m(\cdot;\theta)$ is a Dirac measure, i.e., the model is a point predictor. 

Central to our framework is the notion of an \textit{operational likelihood}, which encodes the decision-maker's preference over candidate models without a parametric density. 
Following the rewriting of $\ell_{\mathrm{ieo}}$ in negative-log form (Section~\ref{sec22}), we define the operational likelihood as follows:
\begin{equation}
    L_\lambda(\mathcal{D}_n\mid \theta) \coloneqq 
    \prod_{i=1}^{n} e^{- \lambda c\lra{\pi_\theta(x_i);y_i}} = 
    \exp\lra{- \lambda n R_n(\theta)} \label{oplike}, 
\end{equation}
where $\lambda > 0$ is an inverse temperature parameter, also referred to as the \textit{learning rate} in Bayesian statistics, that controls the relative weight placed on the information carried by the data versus the prior. A larger $\lambda$ sharpens the likelihood around parameters with lower empirical decision risk. 
We justify a theoretical order $\lambda = C_\lambda n^{-0.5}$ for some constant $C_\lambda > 0$ in Theorem~\ref{thm:2}, and tune $C_\lambda$ for the practical performance of our policy derived in Section~\ref{sec:32}; see~\cite{Wu-2023-BA} for alternative $\lambda$-selection strategies for different purposes.

Replacing the parametric likelihood $L(\cdot \mid \theta)$ in Equation~\eqref{bayes} with the operational likelihood $L_\lambda(\cdot\mid \theta)$ in Equation~\eqref{oplike}, we derive the following decision-focused belief updating mechanism: 
\begin{equation}
        \Gibbs(\text{d} \theta) \coloneqq \frac{L_\lambda(\mathcal{D}_n\mid \theta)\rho_0(\text{d}\theta)}{\int_{\Theta} L_\lambda(\mathcal{D}_n\mid \nu)\rho_0(\text{d}\nu)} \propto L_\lambda(\mathcal{D}_n\mid \theta)\rho_0(\text{d}\theta). \label{da-bayes}
\end{equation}
Although $\Gibbs$ depends on $\lambda,\,\mathcal{D}_n$, and $\pi_\theta$, we suppress this dependence in the notation. This problem-dependent posterior is also known as the \textit{Gibbs} posterior. By construction, $\Gibbs$ assigns greater posterior density to parameter vectors that yield lower empirical decision risk. Parameters with higher (but finite) decision loss still receive nonzero weight, encoding the remaining model uncertainty. In the following Example~\ref{examp:newsvendor}, we demonstrate how our belief updating mechanism for a newsvendor problem recovers the behavior of Bayesian quantile regression using the asymmetric Laplace distribution.

\begin{example}\label{examp:newsvendor}
Consider a newsvendor problem $c(z;y) = c_o(z - y)^+ + c_u (y - z)^+$ with unit overage cost $c_o > 0$ and underage cost $c_u > 0$. Define the critical fractile $\tau = c_u / (c_u + c_o)$ and the pinball loss $\ell_\tau(y) = y(\tau - \mathbb{I}\{y < 0\})$, so that $c(z;y) = (c_u + c_o)\ell_\tau(y - z)$. Given a point predictor $f_\theta: \mathcal{X} \rightarrow \mathcal{Y}$, the Gibbs posterior becomes:
\begin{equation*}
    \Gibbs(\theta)\propto \exp\!\left(-\lambda(c_u+c_o)\sum_{i=1}^n \ell_\tau(y_i - f_\theta(x_i))\right)\rho_0(\theta).
\end{equation*}
This coincides exactly with the posterior arising from Bayesian quantile regression~\citep{Yu-2001-SPL} under an asymmetric Laplace likelihood. In particular, the Bayesian posterior under this likelihood is:
\begin{equation*}
    \rho_n^{\mathrm{AL}}(\theta)
    \;\propto\; \exp\!\left(-\frac{1}{\sigma}\sum_{i=1}^n \ell_\tau(y_i - f_\theta(x_i))\right)\rho_0(\theta).
\end{equation*}
The two posteriors $\Gibbs$ and $\rho_n^{\mathrm{AL}}$ coincide under the specification $\sigma = 1/(\lambda(c_u+c_o))$. The consistency of $\rho_n^{\mathrm{AL}}$ under misspecification (the true density is not asymmetric Laplace) has been established in~\cite{Sriram-2013-BA}.
\end{example}

The operational likelihood mirrors the MLE-to-IEO transition from the frequentist setting into the Bayesian setting: Equation \eqref{oplike} replaces a parametric density with empirical decision risk as the criterion for belief updating. 
This is particularly valuable in data-driven settings with multivariate uncertainty for which specifying a parametric density that faithfully represents the data-generating process is nontrivial. The theoretical properties of the decision-focused posterior $\Gibbs$ are established in Section~\ref{sec:4}.

\subsection{Bayesian Contextual Policy}\label{sec:32}
Our goal is not to infer the posterior itself but to translate it into a data-driven policy. Grounded in Bayes decision theory, the proposed Bayesian contextual policy minimizes expected decision loss under the posterior predictive distribution (PPD); see definition below.

\begin{definition}[Posterior Predictive Distribution, PPD]\label{def:1}
Let $m(\cdot;\theta):\mathcal X \to \mathcal P(\mathcal Y)$ be a predictive model that generates a probability measure for each context $x\in\mathcal X$ and parameter $\theta\in\Theta$. Let $\rho\in\mathcal P(\Theta)$ be a posterior measure. The PPD $m(\cdot;\rho):\mathcal X\to\mathcal P(\mathcal Y)$ is defined for any $x\in\mathcal X$ by:
\[
m(x;\rho)(\mathcal{A})
\coloneqq
\int_\Theta m(x;\theta)(\mathcal{A})\,\rho(\mathrm{d}\theta),
\quad \forall \mathcal{A} \in \mathcal B(\mathcal Y).
\]
That is, $m(x;\rho)$ is the mixture of measures $\{m(x;\theta)\}_{\theta\sim\rho}$ weighted by $\rho$. In the point predictor case where $m(x;\theta)=\delta(f_\theta(x))$ is a Dirac measure for some measurable $f_\theta:\mathcal{X} \rightarrow \mathcal{Y}$, the PPD corresponds to the pushforward of $\rho$ under the map $\theta \mapsto f_\theta(x)$.
\end{definition}

In this work, we focus on the case where the model $m(\cdot;\theta)$ is a point predictor for three reasons. First, point predictors are likelihood-free: no parametric family for the law of $Y|X$ must be specified. Second, they are more flexible than discrete-support modeling for high-dimensional support $\mathcal{Y}$. 
Most importantly, unlike frequentist point predictors, the PPD $m(x;\rho)$ is inherently a mixture measure on $\mathcal B(\mathcal Y)$, and is therefore compatible with risk-averse optimization models even when each $m(\cdot;\theta)$ is a point predictor. For example, if a linear model $m(x;\theta) = \theta x$ is specified and the posterior $\rho = \mathcal{N}(\mu, \sigma^2)$ is Gaussian, then $m(x;\rho) = \mathcal{N}(x\mu, x^2 \sigma^2)$.

\begin{definition}[Bayesian Contextual Policy, BCP]\label{def:2}
Let $\rho \in \mathcal{P}(\Theta)$ be a posterior measure and $m(x;\rho)$ be the posterior predictive distribution given by Definition~\ref{def:1}. The Bayesian contextual policy $\pi_\rho$ is defined for any $x \in \mathcal{X}$ as:
\begin{equation*}
    \pi_\rho(x) \in \argmin_{z \in \mathcal{Z}} \mathbb{E}_{m(x;\rho)}[c(z;Y)] 
    = \argmin_{z \in \mathcal{Z}} \int_{\mathcal{Y}} c(z;y) m(x;\rho)(\mathrm{d}y).
\end{equation*}
\end{definition}

BCP $\pi_\rho$ shares the same feasible region $\mathcal Z$ and objective structure as Problem~\eqref{CSO}, differing only in the distribution under which the expectation is taken: the unknown $\mathbb{P}_{Y|X=x}$ is replaced by the PPD $m(x;\rho)$. This is a deliberate design: by adding no regularization, the policy preserves the computational tractability of the original stochastic problem, so standard techniques, e.g., sample average approximation (SAA), can directly apply.

Our framework exhibits a \textit{separate-learning-aggregate-optimization} structure. During belief updating, each parameter $\theta$ participates individually to evaluate its operational likelihood $L_\lambda(\mathcal{D}_n \mid \theta)$. At the prescribing stage, the policy $\pi_\rho$ aggregates the individual predictions $m(x;\theta)$ into PPD $m(x;\rho)$, thereby propagating model uncertainty into the decision objective.
Figure~\ref{fig:pipeline-v3} contrasts the gradient-based IEO and BCO pipelines across their training and prescribing stages, highlighting how each handles model uncertainty and incorporates optimization feedback.

Beyond $\pi_\rho$, Bayes decision theory also admits policies that serve different purposes. To illustrate what distinguishes our policy from alternatives in handling model uncertainty, we introduce the Thompson policy, i.e., posterior sampling, and the maximum a posteriori (MAP) policy~\citep{Murphy-2012-book}.

\begin{definition}[Thompson and MAP policies]\label{def:3}
Let $\rho \in \mathcal{P}(\Theta)$ be a posterior measure and $m(x;\rho)$ be a posterior predictive distribution. The Thompson policy $\pi^{\mathrm{T}}_\rho$ and MAP policy $\pi^{\mathrm{M}}_\rho$ are defined for any $x \in \mathcal{X}$ as:
\begin{align*}
    \pi^{\mathrm{T}}_\rho(x)  &\in \argmin_{z \in \mathcal{Z}} \mathbb{E}_{m(x;\tilde{\theta})}[c(z;Y)], \quad \tilde{\theta} \sim \rho, \\
    \pi^{\mathrm{M}}_\rho(x) &\in \argmin_{z \in \mathcal{Z}} \mathbb{E}_{m(x;\theta^\dagger)}[c(z;Y)],  \quad \theta^\dagger \in \argmax_{\theta \in \Theta}  \rho(\theta),
\end{align*}
where $\tilde{\theta} \sim \rho$ denotes a random sample $\tilde{\theta}$ drawn from the posterior $\rho$, and $\argmax_{\theta \in \Theta} \rho(\theta)$ is the set of posterior modes.
\end{definition}

Both the Thompson and MAP policies discard the PPD and therefore ignore model uncertainty at the decision stage. The Thompson policy draws a single $\tilde{\theta} \sim \rho$ to construct the predictive distribution for each $x$, making it an inherently stochastic policy suited to balancing exploration and exploitation in online environments~\citep{Agrawal-2025-MOR}. 
The MAP policy uses a posterior mode as the parameter estimate, yielding a deterministic policy.
In contrast, $\pi_\rho$ integrates predictions over the entire posterior, providing a deterministic and more robust prescription when model uncertainty is substantial. The relative prescriptive performance of $\pi_\rho,\pi^{\text{T}}_\rho$, and $\pi^{\text{M}}_\rho$ derived from the same Gibbs posterior is examined in the numerical experiments.

\begin{figure}[ht]
\centering
\caption{Learning and prescribing pipelines of gradient-based IEO (upper panel) and BCO (lower panel).}
\resizebox{\textwidth}{!}{%
\begin{tikzpicture}[
    >=latex,
    sbox/.style={draw, rounded corners=2.5pt, minimum height=10mm, minimum width=17mm,
                inner sep=3pt, font=\small, align=center},
    pbox/.style={sbox, fill=orange!12, draw=orange!80!black},
    lbox/.style={sbox, , fill=gray!5, draw=gray!55!black},
    bcobox/.style={sbox, fill=teal!10},
    stackshadow/.style={sbox, fill=gray!10, draw=gray!25, text opacity=0},
    stackshadowinfer/.style={sbox, fill=orange!10, draw=orange!25, text opacity=0},
    parr/.style={->, semithick, orange!80!black},
    larr/.style={->, semithick, gray!55!black},
    fbarr/.style={->, semithick, gray!55!black},
    trnarr/.style={->, thin, densely dashed, orange!65!black},
    plbl/.style={font=\footnotesize, text=orange!80!black},
    llbl/.style={font=\footnotesize},
    fblbl/.style={font=\footnotesize},
    trnlbl/.style={font=\footnotesize},
    seclbl/.style={font=\footnotesize\bfseries},
    rowlbl/.style={font=\footnotesize\itshape},
]

\fill[gray!1, rounded corners=8pt] (-0.3, 1.8) rectangle (17.5, -2.5);
\draw[gray!10, rounded corners=8pt] (-0.3, 1.8) rectangle (17.5, -2.5);
\node[seclbl, fill=white, inner sep=2pt] at (0.5, 1.8) {\texttt{IEO}};

\node[rowlbl, anchor=west, text=orange!80!black] at (0.1, -1.5) {Prescribing};
\node[pbox] (pp1) at (5, -1.5) {Predictor\\[-1pt]{\footnotesize $m(\cdot;\hat\theta)$}};
\node[pbox] (op1) at (9.5, -1.5) {Optimizer\\[-1pt]{\footnotesize $\argmin_z V(z;\cdot)$}};
\draw[parr] (2.2, -1.5) -- (pp1) node[midway, above, plbl] {\footnotesize $x$} node[midway, below, plbl] {Context};
\draw[parr] (pp1) -- (op1) node[midway, above, plbl] {$m(x;\hat\theta)$};
\draw[parr] (op1.east) -- ++(1.3,0) node[at end, right, plbl] {$\pi_{\hat\theta}(x)$};

\node[rowlbl, anchor=west] at (0.1, 0.2) {Learning};
\node[lbox] (pl1) at (5, 0.2) {Predictor\\[-1pt]{\footnotesize $m(\cdot;\theta)$}};
\node[lbox] (ol1) at (9.5, 0.2) {Optimizer\\[-1pt]{\footnotesize $\argmin_z V(z;\cdot)$}};
\node[lbox] (el1) at (14, 0.2) {Evaluator\\[-1pt]{\footnotesize $c(\cdot;y)$}};
\draw[larr] (2.2, 0.2) -- (pl1) node[midway, above, llbl] {\footnotesize $(x,y)$} node[midway, below, llbl] {Dataset};
\draw[larr] (pl1) -- (ol1) node[midway, above, llbl] {$m(x;\theta)$};
\draw[larr] (ol1) -- (el1) node[midway, above, llbl] {$\pi_\theta(x)$};

\draw[trnarr] (pl1.south) -- (pp1.north) node[midway, right, trnlbl] {$\hat\theta$};

\draw[fbarr] (el1.north) -- ++(0,0.5) -| (pl1.north);
\node[fblbl] at (9.5, 1.45) {$\nabla_\theta\,c(\pi_\theta(x);\,y)$};

\def\by{-5.5}
\fill[teal!1, rounded corners=8pt] (-0.3, \by+2.3) rectangle (17.5, \by-3.5);
\draw[teal!10, rounded corners=8pt] (-0.3, \by+2.3) rectangle (17.5, \by-3.5);
\node[seclbl, fill=white, inner sep=2pt] at (0.5, \by+2.3) {\texttt{BCO}};

\node[rowlbl, anchor=west, text=orange!80!black] at (0.1, \by-2.5) {Prescribing};
\node[stackshadowinfer] at (5.2, \by-2.25) {Predictor\\[-1pt]{\footnotesize $\{m(\cdot;\theta)\}_{\theta\sim\Gibbs}$}};
\node[stackshadowinfer] at (5.1, \by-2.38) {Predictor\\[-1pt]{\footnotesize $\{m(\cdot;\theta)\}_{\theta\sim\Gibbs}$}};
\node[pbox] (pp2) at (5, \by-2.5) {Predictor\\[-1pt]{\footnotesize $\{m(\cdot;\theta)\}_{\theta\sim\Gibbs}$}};
\node[pbox] (ag2) at (9.5, \by-2.5) {Aggregator\\[-1pt]{\footnotesize $\int_\Theta m\,\Gibbs(\mathrm{d}\theta)$}};
\node[pbox] (op2) at (13.5, \by-2.5) {Optimizer\\[-1pt]{\footnotesize $\argmin_z V(z;\cdot)$}};
\draw[parr] (2.2, \by-2.5) -- (pp2) node[midway, above, plbl] {\footnotesize $x$} node[midway, below, plbl] {Context};
\draw[parr] (pp2) -- (ag2) node[midway, above, plbl] {$\{m(x;\theta)\}_{\theta\sim\Gibbs}$};
\draw[parr] (ag2) -- (op2) node[midway, above, plbl] {$m(x;\Gibbs)$};
\draw[parr] (op2.east) -- ++(1.3,0) node[at end, right, plbl] {$\pi_{\Gibbs}(x)$};

\node[rowlbl, anchor=west] at (0.1, \by+1.0) {Learning};
\node[stackshadow] at (5.2, \by+1.25) {Predictor\\[-1pt]{\footnotesize $\{m(\cdot;\theta)\}_{\theta\sim\rho}$}};
\node[stackshadow] at (5.1, \by+1.12) {Predictor\\[-1pt]{\footnotesize $\{m(\cdot;\theta)\}_{\theta\sim\rho}$}};
\node[lbox] (pl2) at (5, \by+1.0) {Predictor\\[-1pt]{\footnotesize $\{m(\cdot;\theta)\}_{\theta\sim\rho}$}};
\node[lbox] (ol2) at (9.5, \by+1.0) {Optimizer\\[-1pt]{\footnotesize $\argmin_z V(z;\cdot)$}};
\node[lbox] (el2) at (14, \by+1.0) {Evaluator\\[-1pt]{\footnotesize $c(\cdot;y)$}};
\draw[larr] (2.2, \by+1.0) -- (pl2) node[midway, above, llbl] {\footnotesize $(x,y)$} node[midway, below, llbl] {Dataset};
\draw[larr] (pl2) -- (ol2) node[midway, above, llbl] {$\{m(x;\theta)\}_{\theta\sim\rho}$};
\draw[larr] (ol2) -- (el2) node[midway, above, llbl] {$\{\pi_\theta(x)\}_{\theta\sim\rho}$};

\node[lbox] (po2) at (5, \by-0.6) {Posterior\\[-1pt]{\footnotesize $\rho \propto L_\lambda \rho_0$}};

\draw[trnarr] (po2.south) -- (pp2.north) node[midway, right, trnlbl] {$ \Gibbs$};

\draw[trnarr] (po2.south east) to [out=-30, in=120] node[midway, sloped, above, trnlbl] {$ \Gibbs$} (ag2.north west);

\draw[fbarr] (el2.south) |- (po2.east);
\node[fblbl] at (10.5, \by-0.3) {$\{c(\pi_\theta(x);y)\}_{\theta\sim\rho} \rightarrow \{L_\lambda(\mathcal{D}_n|\theta)\}_{\theta\sim\rho}$};
\draw[fbarr] (po2.north) -- (pl2.south) node[midway,right, fblbl] {$\rho$};

\node[anchor=north, font=\footnotesize] at (8.5, \by-3.6) {%
  \tikz[baseline=-0.5ex]{\draw[->, semithick, gray!55!black] (0,0) -- (0.7,0);}~Learning\quad
  \tikz[baseline=-0.5ex]{\draw[->, thin, densely dashed, orange!65!black] (0,0) -- (0.7,0);}~Learned model \quad
  \tikz[baseline=-0.5ex]{\draw[->, semithick, orange!80!black] (0,0) -- (0.7,0);}~Prescribing
};

\end{tikzpicture}%
}

\begin{minipage}{\textwidth}
\vskip 0.2cm
\small
\textit{Notes}. \textbf{IEO}: during learning, labeled data $(x,y)$ traverse the full pipeline through the Evaluator, and the gradient $\nabla_\theta c(\pi_\theta(x);y)$ feeds back to update~$\theta$; the resulting $\hat\theta$ is then transferred (dashed orange) to the Predictor for prescription. At prescribing time, the trained model $m(\cdot;\hat\theta)$ maps context~$x$ to a prediction, and the Optimizer yields decision $\pi_{\hat\theta}(x)$. \textbf{BCO}: during learning, an ensemble of predictors (stacked Predictor blocks, shaded) is maintained; each individual prediction is optimized directly, and the resulting costs inform the operational likelihood $L_\lambda(\mathcal{D}_n\mid\theta)$, which updates the posterior~$\rho$ used to generate the ensemble. The learned posterior~$\Gibbs$ is then deployed at prescribing time, where ensemble predictions are aggregated into the posterior predictive distribution $m(x;\Gibbs)=\int_\Theta m(x;\theta)\,\Gibbs(\mathrm{d}\theta)$ before optimization, yielding a single decision~$\pi_{\Gibbs}(x)$.
\end{minipage}
\label{fig:pipeline-v3}
\end{figure}

\subsection{Connections with Bayesian Statistics and PAC-Bayes Learning}\label{sec:33}
We now situate our decision-focused belief updating mechanism within the broader landscape of Bayesian statistics and statistical learning, motivating its theoretical foundations through two complementary perspectives: generalized Bayesian updating~\citep{Bissiri-2016-JRSSB} and probably approximately correct (PAC)-Bayes learning~\citep{Alquier-2024-BOOK}. 

Under standard regularity, the Gibbs posterior $\rho_n$ in Equation~\eqref{da-bayes} is the unique minimizer over $\mathcal{P}(\Theta)$ of the KL divergence-regularized risk functional~\citep{Bissiri-2016-JRSSB}:
\begin{equation}
    \inf_{\rho \in \mathcal{P}(\Theta) }\mathcal{J}(\rho) = \E_{\rho}\lrb{R_n(\theta)} + \frac{1}{\lambda n }\KL{\rho}{\rho_0}, \label{bissiri_eq}
\end{equation}
and is also the unique minimizer of a type of PAC-Bayes upper bound on the population risk $\E_{\rho}\lrb{R(\theta)}$; see Lemma~\ref{lem:1} and~\ref{lem:2} for formal statements.

\cite{Bissiri-2016-JRSSB} arrived at the Gibbs measure from a decision-theoretic axiomatization of coherent belief updating in the presence of a loss function; the loss is treated as a primitive of the inference problem. We arrive at the same posterior by replacing the parametric likelihood in classical Bayesian inference with the operational likelihood induced by the downstream decision cost, that is, as a direct Bayesian generalization of frequentist IEO. The two derivations yield the same expression but support different modeling intuitions: the former justifies the Gibbs posterior as a coherent update; we motivate it as the natural Bayesian counterpart of a decision-focused estimator. More importantly, both the generalized Bayesian updating and PAC-Bayes results bound the risk of a randomized predictor $\theta \sim \rho$, i.e., the Thompson policy $\pi_\rho^\mathrm{T}$. 
Neither addresses the risk of the aggregate policy $\pi_\rho$ that we propose.
Closing this gap, and quantifying when $\pi_\rho$ strictly outperforms its IEO counterpart $\pi_{\theta}$, requires a separate analysis which we provide in Section~\ref{sec:42}.

\section{Theoretical Analysis}\label{sec:4}

\subsection{Posterior Concentration} \label{sec:41}
Posterior concentration is a crucial property for Bayesian frameworks to guarantee the asymptotic consistency of the belief updating as data accumulates. If $\Theta^\star$ denotes the set of best-in-class parameters under a certain criterion, posterior concentration can be expressed as:
\begin{equation}
    \forall \varepsilon > 0, \quad \rho_n\lra{ \lrc{\theta: \varphi(\theta, \Theta^\star) \geq \varepsilon} } \stackrel{n \rightarrow \infty}{\longrightarrow} 0, \label{concentration}
\end{equation}
where $\varphi(\theta,\Theta^\star)$ denotes the divergence between an arbitrary $\theta$ and the set $\Theta^\star$. 
Different choices of $\Theta^\star$ and $\varphi$ lead to substantially different targets and notions of consistency for the Bayesian update.
Our framework contrasts with existing work by considering the best-in-class decision risk minimizer as the learning target to preserve the estimation-optimization consistency. To establish the concentration result, we first define the decision risk for the frequentist policy as follows:
\begin{equation}
    R(\theta) \coloneqq \E_\P[c(\pi_\theta(X);Y)], \quad R^\star \coloneqq \inf_{\theta \in \Theta} R(\theta), 
\end{equation}
whose empirical counterpart $R_n$ was previously defined in Equation~\eqref{emprisk}.
Accordingly, the optimal parameter vectors and the distance function $\varphi$ are defined as:
\begin{equation*}
    \theta^\star\in\Theta^\star \coloneqq \{\theta \mid R(\theta) = R^\star \}, 
    \quad \varphi(\theta,\Theta^\star) \coloneqq R(\theta) - R^\star.
\end{equation*}

With these quantities defined, the following technical assumptions are leveraged to establish the concentration of the Gibbs posterior defined in Equation~\eqref{da-bayes}.
\begin{assumption}[I.I.D. Bounded Loss]\label{A1}
The dataset $\mathcal{D}_n$ consists of i.i.d. observations drawn from a distribution $\P$. In addition, the cost function $0 \leq c(z;y) \leq 1$ is bounded uniformly over $\mathcal{Z} \times \mathcal{Y}$.
\end{assumption}

\begin{assumption}[Regularity]\label{A2}
(i) The parameter space $\Theta \subset \R^d$ is compact with finite diameter $D \coloneqq \max_{\theta,\nu \in \Theta}\norm{\theta - \nu} < \infty$, and $\Theta^\star \cap \mathrm{int}(\Theta) \neq \emptyset$. Fix a point $\theta^\star \in \Theta^\star \cap \mathrm{int}(\Theta)$ and let $r_0 \coloneqq \mathrm{dist}(\theta^\star, \partial \Theta) > 0$. (ii) The prior $\rho_0$ admits a Lebesgue density on $\Theta$ bounded below by $l_0 >0$.
\end{assumption}

\begin{assumption}[Model Complexity]\label{A3}
Let $\mathcal{F}_\Theta \coloneqq \lrc{ (x,y) \mapsto c(\pi_\theta(x);y):\theta \in \Theta }$. There exist constants $d_\mathcal{F} \geq 1$ and $A_\mathcal{F} \geq e$ such that for every probability measure $\mathbb{Q}$ on $\mathcal{X} \times \mathcal{Y}$ and every $r \in (0,1]$,
\begin{equation*}
    \log \mathcal{N}\lra{ r, \mathcal{F}_\Theta, L_2(\mathbb{Q}) } \leq d_\mathcal{F} \log \frac{A_\mathcal{F}}{r},
\end{equation*}
where $\mathcal{N}\lra{ r, \mathcal{F}_\Theta, L_2(\mathbb{Q}) }$ is the $r$-covering number of $\mathcal{F}_\Theta$ in $L_2(\Q)$.
\end{assumption}

\begin{assumption}[Local Risk Stability]\label{A4}
    Let $\theta^\star$ be the point fixed in Assumption~\ref{A2}(i). There exist $\bar{r} > 0$ and a non-decreasing function $\psi: [0, \bar{r}] \rightarrow \R_+$ with $\psi(r) \downarrow 0$ as $r \downarrow 0$ such that
    \begin{equation*}
        R(\theta) - R^\star \leq \psi \lra{\norm{\theta - \theta^\star}}, \quad \forall \, \theta \in \mathbb{B}_{\bar{r}}(\theta^\star) \cap \Theta,
    \end{equation*}
    where $\mathbb{B}_{\bar{r}}(\theta^\star) = \lrc{\theta: \norm{\theta - \theta^\star} \leq \bar{r}}$.
\end{assumption}
Assumptions~\ref{A1}--\ref{A4} separate the statistical and geometric ingredients needed for posterior concentration. Assumption~\ref{A1} ensures that the empirical decision risk \(R_n(\theta)\) concentrates uniformly around its population counterpart \(R(\theta)\); Assumption~\ref{A2} guarantees that the prior \(\rho_0\) assigns non-negligible mass to neighborhoods of interior best-in-class parameters, so the Gibbs posterior has sufficient denominator mass near the optimum; Assumption~\ref{A3} controls the complexity of the induced loss class $\mathcal F_\Theta$ through a uniform \(L_2\)-metric entropy bound; Assumption~\ref{A4} imposes only local stability of the population decision risk near an optimal parameter; it is not a global smoothness requirement. We state Assumptions~\ref{A3}--\ref{A4} in generic forms for the sake of Theorem~\ref{thm:1} and verify them for canonical OR/MS problems in Corollaries~\ref{cor:1} and~\ref{cor:2}.

\begin{theorem}[Posterior Concentration]\label{thm:1}
    Suppose Assumptions~\ref{A1}--\ref{A4} hold. Let $\Gibbs$ be the Gibbs posterior defined in Equation~\eqref{da-bayes} with fixed $\lambda > 0$ and $\mathcal{D}_n$. For any $\varepsilon > 0$, define
    \begin{equation*}
        B_\varepsilon \coloneqq \{ \theta \in \Theta: R(\theta) - R^\star \geq \varepsilon \}.
    \end{equation*}
    By Assumption~\ref{A4}, the set $\{r\in (0, \bar{r}]: \psi(r) \leq \varepsilon / 6\}$ is nonempty; fix any $s_\varepsilon$ in this set and let  $r_\varepsilon\coloneqq \min\{r_0/2, s_\varepsilon\}$. 
    
    There exist universal constants $C_0,  C_1 >0$ such that, for any $\eta \in (0,1)$ and any sample size satisfying
    \begin{equation*}
        n \geq \frac{C_0}{\varepsilon^2}\lra{d_\mathcal{F} \log (C_1 A_\mathcal{F}) + \log\frac{1}{\eta}},
    \end{equation*}
    with probability at least $1-\eta$ over the draw of $\mathcal{D}_n$,
    \begin{equation*}
        \Gibbs(B_\varepsilon) \leq \frac{\exp\lra{-\lambda n \varepsilon / 2}}{l_0 \mathbb{V}_d r_\varepsilon^d} ,
    \end{equation*}
    where $\mathbb{V}_d$ is the Lebesgue volume of the Euclidean unit ball in $\R^d$.
\end{theorem}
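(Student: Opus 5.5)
The plan has two parts. First, a uniform concentration event shows that on a high-probability set, empirical risks track population risks to within a fraction of $\varepsilon$. Second, on that event, a standard ratio argument bounds the numerator and denominator of $\Gibbs(B_\varepsilon)$ separately.

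\textbf{Step 1: uniform deviation.} Let
\[
\Delta_n \coloneqq \sup_{\theta\in\Theta}\abs{R_n(\theta)-R(\theta)}.
\]
Under Assumption~\ref{A1}, the class $\mathcal F_\Theta$ takes values in $[0,1]$. By symmetrization and Dudley's entropy integral with the uniform entropy bound of Assumption~\ref{A3}, $\E\Delta_n \le C\sqrt{d_\mathcal{F}\log(C_1A_\mathcal{F})/n}$. Here the integral $\int_0^1\sqrt{d_\mathcal F\log(A_\mathcal F/r)}\,dr$ is bounded by a constant times $\sqrt{d_\mathcal F\log A_\mathcal F}$, using $A_\mathcal F\ge e$. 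McDiarmid's bounded differences inequality then gives, with probability at least $1-\eta$,
\[
\Delta_n \le \E\Delta_n+\sqrt{\log(1/\eta)/(2n)}.
\]
Choosing $C_0$ large makes the stated sample size condition imply $\Delta_n\le \varepsilon/6$ on this event $\Omega_\eta$. The step needing the most care is tracking universal constants through Dudley's bound so that a single pair $(C_0,C_1)$ works. This is routine, but I would handle it by absorbing everything into $C_1A_\mathcal F$ and $C_0$.

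\textbf{Step 2: numerator.} On $\Omega_\eta$, every $\theta\in B_\varepsilon$ has
\[
R_n(\theta)\ge R(\theta)-\varepsilon/6\ge R^\star+5\varepsilon/6 .
\]
Hence
\[
\int_{B_\varepsilon}e^{-\lambda nR_n}\,d\rho_0\le e^{-\lambda n(R^\star+5\varepsilon/6)}.
\]

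\textbf{Step 3: denominator.} Take the ball $\mathbb{B}_{r_\varepsilon}(\theta^\star)$. Since $r_\varepsilon\le r_0/2$, it lies inside $\Theta$. Since $r_\varepsilon\le s_\varepsilon\le\bar r$ and $\psi$ is non-decreasing, Assumption~\ref{A4} gives $R(\theta)-R^\star\le\psi(r_\varepsilon)\le\psi(s_\varepsilon)\le\varepsilon/6$ on the ball. On $\Omega_\eta$ this yields $R_n(\theta)\le R^\star+\varepsilon/3$ there. By Assumption~\ref{A2}(ii), $\rho_0$ of the ball is at least $l_0\mathbb V_d r_\varepsilon^d$. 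Therefore
\[
\int_\Theta e^{-\lambda nR_n}\,d\rho_0\ge l_0\mathbb V_d r_\varepsilon^d\, e^{-\lambda n(R^\star+\varepsilon/3)} .
\]

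\textbf{Step 4: ratio.} Dividing the bound of Step 2 by that of Step 3, the common factor $e^{-\lambda nR^\star}$ cancels. This leaves
\[
\Gibbs(B_\varepsilon)\le \frac{e^{-\lambda n(5/6-1/3)\varepsilon}}{l_0\mathbb V_d r_\varepsilon^d}=\frac{e^{-\lambda n\varepsilon/2}}{l_0\mathbb V_d r_\varepsilon^d},
\]
which is exactly the claimed bound, holding on $\Omega_\eta$. This explains the $\varepsilon/6$ threshold: a $\varepsilon/6$ deviation on each side plus a $\varepsilon/6$ local-stability slack leaves exactly the exponent $\varepsilon/2$.
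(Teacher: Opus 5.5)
Your proposal is correct and follows the paper's proof essentially step for step. The paper likewise works on the uniform deviation event $\{\sup_\theta |R_n(\theta)-R(\theta)|\le \varepsilon/6\}$, obtained from an entropy-integral bound on the expected supremum plus McDiarmid, and then uses the same numerator bound $e^{-\lambda n(R^\star+5\varepsilon/6)}$ and the same denominator bound $l_0\mathbb{V}_d r_\varepsilon^d e^{-\lambda n(R^\star+\varepsilon/3)}$ on $\mathbb{B}_{r_\varepsilon}(\theta^\star)$. The only cosmetic difference is that you derive the expectation bound via symmetrization and Dudley's integral, whereas the paper cites the uniform-entropy maximal inequality (van der Vaart--Wellner, Theorem 2.14.1), which packages the same argument.
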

\proof{Proof of Theorem~\ref{thm:1}} The  proof of Theorem~\ref{thm:1} is provided in the e-companion to this paper. \halmos\endproof

Theorem~\ref{thm:1} establishes a nonasymptotic concentration result for the Gibbs posterior, demonstrating that it contracts around the target set at an exponential rate in the sample size $n$, under the generic conditions specified in Assumptions~\ref{A3}--\ref{A4}. These conditions constrain the complexity of the policy-induced loss function class and the local geometric properties of the associated risk function. 

Assumptions~\ref{A3}--\ref{A4} can be verified for canonical OR/MS problems with specific structures. In this work, we instantiate Theorem~\ref{thm:1} for two important problem classes, summarized in Condition~\ref{cond:1} and Condition~\ref{cond:2}, distinguished by the smoothness of the estimate-then-optimize pipeline. To keep this verification tractable and align with practice, we adopt two further practical assumptions, on the predictive model and on the cost function, respectively. Assumption~\ref{A5} restricts attention to regular point predictors, a parametric class broad enough to cover bounded-weight neural networks. Assumption~\ref{A6} imposes minimal regularity on the cost function.

\begin{assumption}[Regular Point Predictor]\label{A5}
(i) The parametric model $m(\cdot;\theta)$ is a point predictor, i.e., $m(x;\theta) = \delta(f_\theta(x))$
for a measurable function $f_\theta: \mathcal{X} \to \mathcal{Y}$ that is $L_f$-Lipschitz in $\theta$ uniformly over $\mathcal{X}$.
(ii) For every fixed affine functional $y \mapsto a^\top y + b$, the scalar class $\mathcal{S} = \lrc{ x \mapsto a^\top f_\theta(x) + b : \theta \in \Theta}$ has pseudo-dimension at most $d_\mathcal{S} < \infty$. 
\end{assumption}

\begin{assumption}[Lipschitz Cost in Uncertainty]\label{A6}
There exists $L_y < \infty$ such that 
\begin{equation*}
    \abs{ c(z;y) - c(z;y')} \leq L_y \norm{y - y'}, \quad \forall \, z \in \mathcal{Z}, \; \forall \, y, y' \in \mathcal{Y}.
\end{equation*}
\end{assumption}

We first study the nonsmooth problems described in Condition~\ref{cond:1} where the optimal decision may jump among finitely many feasible decisions. This class covers many instances of linear programs (LPs), mixed-integer linear programs (MILPs), and two-stage linear programs (TSLPs).
\begin{condition}[Nonsmooth Finite Problems]\label{cond:1}
The downstream problem has the following properties:
\begin{enumerate}[label=\textup{(\roman*)}]
    \item \textup{(Finite Effective Decisions)} There is a finite set $\mathcal{V} = \{z_1, \dots, z_K\} \subseteq \mathcal{Z}$ with $\abs{\mathcal{V}} \geq 2$ such that for every $\theta \in \Theta$ and $x \in \mathcal{X}$, the policy output satisfies $\pi_\theta(x) = z_k, \; k = \zeta_\theta(x)$ for a multiclass map $\zeta_\theta:\mathcal{X} \rightarrow \{1,\dots,K\}$ obtained by minimizing $c(z_k;f_\theta(x))$ over $k$ with a fixed selection rule. Also, $\max_{z \in \mathcal{V}} \sup_{y \in \mathcal{Y}} c(z;y) \leq 1$.
    \item \textup{(Piecewise-Affine Pairwise Comparisons)} For every pair $j \neq k$, the region
    \begin{equation*}
        \lrc{
        y \in \mathcal{Y} : c(z_k;y) \leq c(z_j;y)
        }    
    \end{equation*}
    can be represented by a Boolean formula involving at most $M$ affine inequalities in $y$ of the form $a^\top y + b \leq 0$.
    \item \textup{(Nondegeneracy)} For the interior $\theta^\star$ in Assumption~\ref{A2}, let $z^\star(x) = \pi_{\theta^\star}(x)$, and
    \begin{equation*}
        \Delta^\star(x) = \min_{z \in \mathcal{V} \setminus \{z^\star(x)\} } \lrb{ c(z;f_{\theta^\star}(x)) - c(z^\star(x);f_{\theta^\star}(x)) }.
    \end{equation*}
    There exist constants $C_\Delta < \infty$ and $\beta > 0$ such that
    \begin{equation*}
        \P_X\lra{ 0 < \Delta^\star(X) \leq t } \leq C_\Delta t^\beta, \quad \forall t > 0,
    \end{equation*}
    and $\P_X(\Delta^\star(X) = 0) = 0$.
\end{enumerate}
\end{condition}
\begin{remark}
    Condition~\ref{cond:1} is met by many instances of LPs, MILPs, and TSLPs. Specifically, Condition~\ref{cond:1}(i)-(ii) covers linear objective coefficients when $M=1$, bounded LP or MILP deterministic problems with finite extreme-point representations, and two-stage linear recourse functions on regions where the relevant finite basis or dual-extreme-point representation applies. Condition~\ref{cond:1}(iii) parallels the noise condition in~\citep[Assumption 2]{Hu-2022-MS} and rules out pathological cases.
\end{remark}

The following Corollary~\ref{cor:1} verifies generic Assumptions~\ref{A3}--\ref{A4} for problems satisfying Condition~\ref{cond:1} using practical Assumptions~\ref{A5}--\ref{A6}.
\begin{corollary}[Posterior Concentration of Nonsmooth Pipelines]\label{cor:1}
Suppose Assumptions~\ref{A1}--\ref{A2},~\ref{A5}--\ref{A6} hold, and Condition~\ref{cond:1} holds. Then Assumptions~\ref{A3}--\ref{A4} hold with 
\begin{equation*}
    d_\mathcal{F} \leq K^2 M d_\mathcal{S}, \quad A_\mathcal{F} = C_{c1} K\sqrt{M}, \quad \psi(r) = C_\Delta(2 L_y L_f r)^\beta, \quad \bar{r} = D,
\end{equation*}
for a universal constant $C_{c1}>0$, and Theorem~\ref{thm:1} applies accordingly.
\end{corollary}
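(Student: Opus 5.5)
The proof splits into two independent verifications: the entropy bound of Assumption~\ref{A3} and the local stability bound of Assumption~\ref{A4}. Once both hold with the stated constants, Theorem~\ref{thm:1} applies verbatim.

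\textbf{Assumption~\ref{A3}.} Under Condition~\ref{cond:1}(i), each loss function factors as
\[
(x,y)\mapsto c(z_{\zeta_\theta(x)};y)=\sum_{k=1}^K \mathbb{I}\{\zeta_\theta(x)=k\}\,c(z_k;y).
\]
So $\mathcal{F}_\Theta$ is the image of the multiclass class $\{\zeta_\theta\}$ under a fixed map. Covering $\mathcal{F}_\Theta$ in $L_2(\Q)$ therefore reduces to covering the $K$ indicator classes, since $c\le 1$.

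The event $\{\zeta_\theta(x)=k\}$ is a Boolean combination, over $j\neq k$ and with the fixed tie-breaking rule, of the events $\{c(z_k;f_\theta(x))\le c(z_j;f_\theta(x))\}$. By Condition~\ref{cond:1}(ii), each such event is a Boolean formula in at most $M$ threshold events $\{a^\top f_\theta(x)+b\le 0\}$. By Assumption~\ref{A5}(ii), each threshold class is a subgraph class of the scalar class $\mathcal{S}$, so its VC dimension is at most $d_\mathcal{S}$.

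Composing at most $K^2M$ such classes through a fixed Boolean function gives growth function at most $\prod(\text{Sauer bounds})$. This yields VC-type dimension $O(K^2 M d_\mathcal{S})$, up to a logarithmic factor that I would absorb into the constant $A_\mathcal{F}$. Haussler's packing bound then gives
\[
\log\mathcal{N}\lra{r,\mathcal{F}_\Theta,L_2(\Q)}\le K^2Md_\mathcal{S}\log\frac{C_{c1}K\sqrt{M}}{r}.
\]
The factor $K$ in $A_\mathcal{F}$ comes from summing $K$ indicator pieces, each needing accuracy $r/K$. The $\sqrt{M}$, and the bookkeeping that avoids an extra $\log(KM)$ in $d_\mathcal{F}$, is the fussy part. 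I expect this to be the main obstacle, and I would handle it via the standard bound on the VC dimension of $s$-fold Boolean compositions, $O(s\,d\log s)$, pushing the $\log s$ into $A_\mathcal{F}$.

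\textbf{Assumption~\ref{A4}.} Take $\bar r=D$ and fix $\theta$ with $\norm{\theta-\theta^\star}\le r$. Assumption~\ref{A5}(i) gives $\norm{f_\theta(x)-f_{\theta^\star}(x)}\le L_fr$. Combined with Assumption~\ref{A6}, every $z\in\mathcal{V}$ satisfies
\[
\abs{c(z;f_\theta(x))-c(z;f_{\theta^\star}(x))}\le L_yL_fr.
\]
Hence if $\Delta^\star(x)>2L_yL_fr$, the minimizer stays at $z^\star(x)$ and $\pi_\theta(x)=\pi_{\theta^\star}(x)$. Ties at $\Delta^\star=0$ are a null set by Condition~\ref{cond:1}(iii).

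So $\pi_\theta$ and $\pi_{\theta^\star}$ differ only on $\{0<\Delta^\star(X)\le 2L_yL_fr\}$. On this set the cost difference is at most $1$, by the bound in Condition~\ref{cond:1}(i). Therefore
\[
R(\theta)-R^\star\le \P_X\lra{0<\Delta^\star(X)\le 2L_yL_fr}\le C_\Delta(2L_yL_fr)^\beta=\psi(r),
\]
and $\psi$ is non-decreasing with $\psi(r)\downarrow 0$.

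Combining the two verifications, Theorem~\ref{thm:1} applies directly.
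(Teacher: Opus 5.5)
Your verification of Assumption~\ref{A4} is correct and is the same margin argument the paper uses. The gap is in Assumption~\ref{A3}.

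Going through the VC dimension of the composed class cannot deliver $d_\mathcal{F}\le K^2Md_\mathcal{S}$. The composition bound is $O(s\,d_\mathcal{S}\log s)$ with $s\asymp K^2M$, and the $\log s$ is unavoidable for VC dimensions of compositions in general. Haussler's bound then gives $\log\mathcal{N}(r,\mathcal{F}_\Theta,L_2(\Q))\lesssim s\,d_\mathcal{S}\log s\cdot\log(C/r)$. Assumption~\ref{A3} must hold for every $r\in(0,1]$, and $d_\mathcal{F}$ is exactly the coefficient of $\log(1/r)$. So a multiplicative $\log s$ cannot be pushed into $A_\mathcal{F}$: that would need $\log s\cdot\log(C/r)\le\log(A_\mathcal{F}/r)$ as $r\downarrow0$, which fails once $\log s>1$.

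Covering the $K$ indicator classes $\{\zeta_\theta=k\}$ separately at accuracy $r/K$ makes things worse. Each piece re-covers threshold classes shared with the other pieces, and their log-covering numbers add, which inflates $d_\mathcal{F}$ rather than $A_\mathcal{F}$. Nothing in this route produces the $\sqrt M$ either. Your argument does give a valid Assumption~\ref{A3} with a $\log(KM)$-inflated $d_\mathcal{F}$, so Theorem~\ref{thm:1} still applies qualitatively, but not with the stated constants.

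The missing idea is to skip the composite's VC dimension altogether. Instead, cover the $T\le K^2M/2$ base classes $\mathcal{H}_t=\{x\mapsto\mathbb{I}\{a_t^\top f_\theta(x)+b_t\le0\}:\theta\in\Theta\}$ jointly by a product net. The loss depends on $(\theta,x)$ only through the sign vector $\sigma_\theta(x)=(h_1(x;\theta),\dots,h_T(x;\theta))\in\{0,1\}^T$. Costs lie in $[0,1]$ and each $h_t$ is $\{0,1\}$-valued, so $\norm{h-g}_{L_2(\Q_X)}^2=\Q_X(h\neq g)$, and therefore
\[
\norm{c(\pi_{\theta_1}(\cdot);\cdot)-c(\pi_{\theta_2}(\cdot);\cdot)}_{L_2(\Q)}^2\le\Q_X(\sigma_{\theta_1}\neq\sigma_{\theta_2})\le\sum_{t=1}^T\norm{h_t(\cdot;\theta_1)-h_t(\cdot;\theta_2)}_{L_2(\Q_X)}^2 .
\]
Take internal, $\{0,1\}$-valued $r/\sqrt T$-nets of each $\mathcal{H}_t$ and feed them coordinatewise through the fixed selection rule. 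This gives an $r$-net of $\mathcal{F}_\Theta$ with
$\log\mathcal{N}(r,\mathcal{F}_\Theta,L_2(\Q))\le\sum_t\log\mathcal{N}(r/\sqrt T,\mathcal{H}_t,L_2(\Q_X))\le 2Td_\mathcal{S}\log(2e\sqrt T/r)$,
using Haussler's bound with $\mathrm{VCdim}(\mathcal{H}_t)\le d_\mathcal{S}$.

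Because log-covering numbers add, $T$ enters the dimension linearly and enters the logarithm only through $\sqrt T$. Then $2T\le K^2M$ and $2e\sqrt T\le\sqrt2\,eK\sqrt M$ give exactly $d_\mathcal{F}\le K^2Md_\mathcal{S}$ and $A_\mathcal{F}=C_{c1}K\sqrt M$. Both the $K$ and the $\sqrt M$ come from this squared-norm union bound, which sets the per-coordinate scale at $r/\sqrt T$ rather than $r/T$.
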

\proof{Proof of Corollary~\ref{cor:1}} The proof of Corollary~\ref{cor:1} is provided in the e-companion to this paper. \halmos\endproof
\begin{remark}
    The pseudo-dimension $d_\mathcal{S}$ in Corollary~\ref{cor:1} is finite for many standard predictor classes; we record explicit $L_f$ and $d_\mathcal{S}$ for three of them. In each case $\varphi$ is a fixed feature map with $\sup_x \norm{\varphi(x)} \leq B_\varphi < \infty$. (i) \textbf{Affine or fixed-basis models}: for $f_\theta(x) = W\varphi(x) + b$ with $\theta = (\mathrm{vec}(W),b) \in \R^d$, the map $\theta \mapsto f_\theta(x)$ is affine, so $L_f \leq \sqrt{B^2_\varphi + 1}$. The scalar class $\mathcal{S}$ lies in a $(d+1)$-dimensional vector space of functions, hence $d_\mathcal{S} \leq d + 1$~\citep[Theorem 11.4]{anthony-1999-book}.
    (ii) \textbf{Generalized linear models}: for $f_\theta(x) = g(W\varphi(x) + b)$ with a fixed $L_g$-Lipschitz link $g$, the same argument gives $L_f\leq L_g \sqrt{B_\varphi^2 + 1}$. When $g$ is monotone, the thresholds defining $\mathcal{S}$ reduce to thresholds of the affine predictor, so $d_\mathcal{S} = O(d)$ as in (i); for general semi-algebraic or piecewise-polynomial links, $d_\mathcal{S}$ remains polynomial in $d$~\citep{Goldberg-1995-ML}.
    (iii) \textbf{Bounded-weight ReLU networks}: let $f_\theta$ be an $L$-layer feedforward ReLU network with $d$ trainable parameters, input bound $\sup_x \norm{x} \leq B_x$, and layer-wise operator norm at most $B_\theta$. COmposing the Lipschitz layers gives $L_f \leq L B_\theta^{L-1}\max\lrc{B_x,1}$, while the piecewise-linear pseudo-dimension bound yields $d_\mathcal{S} \leq C_{nn} L d \log d $ for a universal constant $C_{nn}$~\citep{Bartlett-2019-JMLR}.
\end{remark}

Next, we focus on another class of problems where the estimate-then-optimize pipeline is sufficiently smooth. In OR/MS, newsvendor and mean-variance portfolio problems are canonical examples of this case.

\begin{condition}[Smooth Problems]\label{cond:2}
    The downstream problem has the following properties:
    \begin{enumerate}[label=\textup{(\roman*)}]
    \item \textup{(Lipschitz Cost in Decision)} The objective function is $L_{z}$-Lipschitz continuous in $z$ uniformly over $\mathcal{Y}$.
    \item \textup{(Lipschitz Policy in Parameter)} The policy $\pi_\theta$ is $L_\theta$-Lipschitz continuous in $\theta$ uniformly over $\mathcal{X}$.
    \end{enumerate}
\end{condition}
\begin{remark}
    Given the Lipschitz constant $L_f$ of a model, for a newsvendor problem defined in Example~\ref{examp:newsvendor}, Condition~\ref{cond:2} is met with $L_z = \max\{c_u, c_o\}$ and $L_\theta = L_f$. For a mean-variance portfolio: $c(z;y) = -y^\top z + \frac{\kappa}{2} z^\top \Sigma z$ with $\mathcal{Z} = \{z \geq 0: \mathbf{1}^\top z\leq 1 \},\,\Sigma \succ 0$, and $\kappa > 0$, if $\mathcal{Y}$ is bounded with radius $B_y$, then $L_z=B_y + \kappa \norm{\Sigma}$, and $L_\theta = L_f / \lra{\kappa \lambda_{\mathrm{min}}}$ where $\lambda_{\mathrm{min}}$ is the smallest eigenvalue of $\Sigma$.
\end{remark}

Condition~\ref{cond:2}(ii) is what most distinguishes smooth from nonsmooth problems in Condition~\ref{cond:1} by the Lipschitz continuity of policy $\pi_\theta$ in $\theta$. The following Corollary~\ref{cor:2} verifies generic Assumptions~\ref{A3}--\ref{A4} directly for problems satisfying Condition~\ref{cond:2}.

\begin{corollary}[Posterior Concentration of Smooth Pipelines]\label{cor:2}
Suppose Assumptions~\ref{A1}--\ref{A2} hold, and Condition~\ref{cond:2} holds. Then Assumptions~\ref{A3}--\ref{A4} hold with 
\begin{equation*}
    d_\mathcal{F} = d, \quad A_\mathcal{F} = \max\lrc{3 D L_z L_\theta, e}, \quad \psi(r) = L_z L_\theta r, \quad \bar{r} = D,
\end{equation*}
and Theorem~\ref{thm:1} applies accordingly.
\end{corollary}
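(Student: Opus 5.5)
The plan is to verify Assumptions~\ref{A3} and~\ref{A4} directly from the Lipschitz structure of Condition~\ref{cond:2}. The starting point is a single pointwise inequality: for any $\theta,\nu\in\Theta$ and any $(x,y)$,
\begin{equation*}
\abs{c(\pi_\theta(x);y) - c(\pi_\nu(x);y)} \leq L_z \norm{\pi_\theta(x) - \pi_\nu(x)} \leq L_z L_\theta \norm{\theta - \nu}.
\end{equation*}
The first inequality is Condition~\ref{cond:2}(i) and the second is Condition~\ref{cond:2}(ii). The bound is uniform in $(x,y)$, so the map $\theta\mapsto c(\pi_\theta(\cdot);\cdot)$ from $\Theta$ into $\mathcal{F}_\Theta$ is $L_zL_\theta$-Lipschitz in the sup norm. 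It is therefore $L_zL_\theta$-Lipschitz in $L_2(\Q)$ for every probability measure $\Q$, with a constant that does not depend on $\Q$.

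For Assumption~\ref{A4}, take $\theta^\star$ from Assumption~\ref{A2}(i). Integrating the pointwise inequality against $\P$ gives
\begin{equation*}
R(\theta)-R^\star = \E_\P\lrb{c(\pi_\theta(X);Y)-c(\pi_{\theta^\star}(X);Y)} \leq L_zL_\theta\norm{\theta-\theta^\star}.
\end{equation*}
This holds for every $\theta\in\Theta$. Since $\Theta\subseteq\mathbb{B}_D(\theta^\star)$, we may take $\bar r = D$ and $\psi(r)=L_zL_\theta r$, which is non-decreasing and tends to $0$ as $r \downarrow 0$.

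For Assumption~\ref{A3}, a parametric covering argument suffices. By the Lipschitz bound, an $r/(L_zL_\theta)$-net of $\Theta$ in Euclidean norm induces an $r$-net of $\mathcal{F}_\Theta$ in $L_2(\Q)$, uniformly over $\Q$. The set $\Theta$ lies in a Euclidean ball of radius $D$, so the standard volumetric bound gives a $\delta$-net of size at most $(1+2D/\delta)^d \le (3D/\delta)^d$ for $\delta\le D$. We then set $\delta = r/(L_zL_\theta)$ and split into two cases.
\begin{itemize}
\item If $\delta\le D$, then $\log\mathcal{N}(r,\mathcal{F}_\Theta,L_2(\Q)) \le d\log(3DL_zL_\theta/r) \le d\log(A_\mathcal{F}/r)$ with $A_\mathcal{F}=\max\{3DL_zL_\theta,e\}$.
\item If $\delta> D$, a single point covers $\Theta$, so the log-covering number is $0$. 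This is bounded by $d\log(A_\mathcal{F}/r)$ because $A_\mathcal{F}\ge e > 1 \ge r$.
\end{itemize}
Both cases give $d_\mathcal{F}=d\ge1$ and $A_\mathcal{F}\ge e$, as required. Note that the net should be taken of the ball containing $\Theta$ and then projected into $\Theta$, which at most doubles the radius. Alternatively, one can use an internal covering of $\Theta$ directly; the factor $3$ already absorbs this.

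The argument is essentially routine. The only points needing care are the handling of the constant in the covering bound, namely internal versus external nets and the regime $r$ close to $1$ where $\delta$ may exceed $D$, and checking that $A_\mathcal{F}\ge e$ makes the bound valid for all $r\in(0,1]$. Once Assumptions~\ref{A3} and~\ref{A4} are established, Theorem~\ref{thm:1} applies verbatim. With $\psi(r)=L_zL_\theta r$ one can choose $s_\varepsilon=\min\{\bar r,\varepsilon/(6L_zL_\theta)\}$, which makes $r_\varepsilon$ explicit.
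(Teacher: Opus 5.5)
Your proposal is correct and follows the paper's proof. Both arguments integrate the uniform Lipschitz chain $\abs{c(\pi_\theta(x);y)-c(\pi_\nu(x);y)}\le L_zL_\theta\norm{\theta-\nu}$ against $\P$ to obtain Assumption~\ref{A4}, and transfer the same bound to $L_2(\Q)$, combined with the volumetric estimate $(3D/\epsilon)^d$, to obtain Assumption~\ref{A3}.

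Your explicit treatment of the regime $r>L_zL_\theta D$, where the covering number is one, is slightly more careful than the paper, which derives the bound only for $r\le L_zL_\theta D$. One small correction: to keep the constant $3$, use the internal maximal-packing net of $\Theta$, which has size at most $(1+2D/\delta)^d$. Projecting a $\delta/2$-net of the enclosing ball into $\Theta$ would instead inflate the constant.
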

\proof{Proof of Corollary~\ref{cor:2}} The proof of Corollary~\ref{cor:2} is provided in the e-companion to this paper. \halmos\endproof
\begin{remark}
    The constants $L_y, \, L_f$, and $d_\mathcal{S}$ from Assumptions~\ref{A5} and~\ref{A6} do not appear in Corollary~\ref{cor:2}. The smooth case sidesteps the prediction layer entirely: Condition~\ref{cor:2} provides a direct Lipschitz chain with constants $L_\theta$ and $L_z$, so the entropy bound and the local risk modulus follow from a $d$-dimensional Lipschitz parameterization of the loss class. This contrasts with Corollary~\ref{cor:1}, where the loss class is piecewise constant in $\theta$ and complexity must be controlled through the multiclass partition structure of the predictor, which is where $d_\mathcal{S}$ enters.
\end{remark}
Corollary~\ref{cor:2} does not require Assumption~\ref{A6} because the smooth case derives both metric entropy and local risk stability from $L_z$ and $L_\theta$ in Condition~\ref{cor:2} alone. Assumption~\ref{A6} is retained as a global hypothesis because the out-of-sample guarantees in Section~\ref{sec:42} invoke it.
\subsection{Out-of-Sample Guarantees} \label{sec:42}
To characterize the out-of-sample performance of BCP, we first define its decision risk given any posterior $\rho \in \mathcal{P}(\Theta)$ as:
\begin{equation}
    \mathcal{R}(\rho) = \E_{\P}\lrb{ c (\pi_\rho(X); Y) }, \quad \mathcal{R}^\star = \inf_{\rho \in \mathcal{P}(\Theta)} \mathcal{R}(\rho). \label{risk-functional}
\end{equation}
We use the calligraphic $\mathcal{R}$ for a measure $\rho$ to distinguish it from $R$ for finite-dimensional $\theta$. 
To bound the out-of-sample risk $\mathcal{R}(\rho)$ induced by a posterior $\rho$, two natural comparators are $R^\star$ (parameter-level) and $\mathcal{R}^\star$ (measure-level).
We first derive the gap $\mathcal{R}(\Gibbs) - R^\star$ for the Gibbs posterior in Proposition~\ref{prop:1} to highlight the impact of model misspecification, model uncertainty, and posterior concentration on decision-making.

\begin{proposition}[Bayesian-Frequentist Gap]\label{prop:1}
    Suppose Assumptions~\ref{A5}(i) and~\ref{A6} hold, and suppose $\Theta$ has finite diameter $D$. Let $\rho \in \mathcal{P}(\Theta)$ be any posterior measure. For any measurable set $B\subseteq \Theta$, let $G = \Theta \setminus B$, assume $\rho(G) > 0$, and define the conditional posterior $\rho_{G}(\cdot) = \rho(\cdot \mid G)$.
    Then for any $\theta^\star \in \Theta^\star$,
    \begin{equation*}
        - \mathcal{E}(\rho_G) - \Delta(\rho_G) \leq \mathcal{R}(\rho) - R^\star \leq \mathcal{E}(\rho_G) + 2 L_y L_f D \rho(B), 
    \end{equation*}
    where $\mathcal{E}(\rho_G) = 2 L_y \E_{\P_X}\lrb{ \mathcal{W}_1\lra{ \P_{Y \mid X}, m(X;\rho_G) }  }$ and $\Delta(\rho_G) = 2 L_y L_f \E_{\rho_G}\lrb{\norm{\theta-\theta^\star}}$ serve as proxies for model misspecification and within-class model uncertainty, respectively.

    In addition, if $\rho=\rho_n$ is the Gibbs posterior, $B=B_\varepsilon, \, G=G_\varepsilon$ for any $\varepsilon > 0$, and the concentration $\rho_n(B_\varepsilon) \leq \delta_{n,\varepsilon}$ holds with probability at least $1- \eta$, then with probability at least $1 - \eta$,
    \begin{equation*}
        - \mathcal{E}(\rho_G) - \Delta(\rho_G) \leq \mathcal{R}(\rho) - R^\star \leq \mathcal{E}(\rho_G) + 2 L_y L_f D \delta_{n,\varepsilon}.
    \end{equation*}
    Under the assumptions of Theorem~\ref{thm:1}, 
    \begin{equation*}
        \delta_{n,\varepsilon} = \frac{\exp\lra{-\lambda n \varepsilon / 2}}{l_0 \mathbb{V}_d r_\varepsilon^d}.
    \end{equation*}
    
\end{proposition}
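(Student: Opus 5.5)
The plan is to work pointwise in the context $x$ and then integrate over $\P_X$. Everything rests on one elementary \emph{plug-in lemma}. Let $\mu,\nu\in\mathcal P(\mathcal Y)$ and let $z_\nu\in\argmin_{z\in\mathcal Z}\E_\nu[c(z;Y)]$. By Assumption~\ref{A6}, each $c(z;\cdot)$ is $L_y$-Lipschitz, so Kantorovich--Rubinstein duality gives
\begin{equation*}
\abs{\E_\mu[c(z;Y)]-\E_\nu[c(z;Y)]}\le L_y\mathcal W_1(\mu,\nu)\quad\text{for every } z.
\end{equation*}
Hence, for any $z'\in\mathcal Z$,
\begin{equation*}
\E_\mu[c(z_\nu;Y)]\le \E_\nu[c(z_\nu;Y)]+L_y\mathcal W_1(\mu,\nu)\le \E_\nu[c(z';Y)]+L_y\mathcal W_1(\mu,\nu)\le \E_\mu[c(z';Y)]+2L_y\mathcal W_1(\mu,\nu).
\end{equation*}
I will also use two facts about $\mathcal W_1$ for point predictors. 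First, by Assumption~\ref{A5}(i), $\mathcal W_1(\delta(f_\theta(x)),\delta(f_{\theta'}(x)))\le L_f\norm{\theta-\theta'}\le L_fD$. Second, $\mathcal W_1$ is jointly convex under mixtures, which follows by gluing couplings.

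\textbf{Upper bound.} Fix $x$ and apply the lemma with $\mu=\P_{Y\mid X=x}$, $\nu=m(x;\rho)$ and $z'=\pi_{\theta^\star}(x)$. This gives
\begin{equation*}
\E[c(\pi_\rho(x);Y)\mid x]-\E[c(\pi_{\theta^\star}(x);Y)\mid x]\le 2L_y\mathcal W_1(\P_{Y\mid x},m(x;\rho)).
\end{equation*}
Next I split the PPD as $m(x;\rho)=\rho(G)\,m(x;\rho_G)+\rho(B)\,m(x;\rho_B)$, where $\rho_B=\rho(\cdot\mid B)$ (if $\rho(B)=0$ this step is vacuous). Writing $m(x;\rho_G)=\rho(G)m(x;\rho_G)+\rho(B)m(x;\rho_G)$ and using convexity,
\begin{equation*}
\mathcal W_1(m(x;\rho_G),m(x;\rho))\le\rho(B)\,\mathcal W_1(m(x;\rho_G),m(x;\rho_B))\le\rho(B)L_fD.
\end{equation*}
The last inequality holds because both measures are mixtures of Diracs $\delta(f_\theta(x))$, and the product coupling of $\rho_G\otimes\rho_B$ moves mass at most $L_fD$. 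The triangle inequality then bounds $\mathcal W_1(\P_{Y\mid x},m(x;\rho))$ by $\mathcal W_1(\P_{Y\mid x},m(x;\rho_G))+L_fD\rho(B)$. Integrating over $\P_X$ and using $R(\theta^\star)=R^\star$ yields the upper bound $\mathcal E(\rho_G)+2L_yL_fD\rho(B)$.

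\textbf{Lower bound.} Let $V^\star(x)=\inf_z\E[c(z;Y)\mid x]$. Trivially $\E[c(\pi_\rho(x);Y)\mid x]\ge V^\star(x)$. Now apply the lemma with $\nu=\delta(f_{\theta^\star}(x))$, so that $z_\nu=\pi_{\theta^\star}(x)$:
\begin{equation*}
\E[c(\pi_{\theta^\star}(x);Y)\mid x]\le V^\star(x)+2L_y\mathcal W_1(\P_{Y\mid x},\delta(f_{\theta^\star}(x))).
\end{equation*}
Strictly, the lemma gives this with $z'$ arbitrary; taking the infimum over $z'$ gives $V^\star$. Then
\begin{equation*}
\mathcal W_1(\P_{Y\mid x},\delta(f_{\theta^\star}(x)))\le\mathcal W_1(\P_{Y\mid x},m(x;\rho_G))+\mathcal W_1(m(x;\rho_G),\delta(f_{\theta^\star}(x))),
\end{equation*}
and the last term is at most $L_f\E_{\rho_G}\norm{\theta-\theta^\star}$, again by convexity or an explicit coupling. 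Integrating over $\P_X$ gives $R^\star\le\E_{\P_X}[V^\star(X)]+\mathcal E(\rho_G)+\Delta(\rho_G)\le\mathcal R(\rho)+\mathcal E(\rho_G)+\Delta(\rho_G)$, which is the lower bound.

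\textbf{Gibbs specialization.} Take $\rho=\rho_n$ and $B=B_\varepsilon$. The two-sided bound holds deterministically for every realization of $\mathcal D_n$, and its upper side is nondecreasing in $\rho(B)$. So on the event $\{\rho_n(B_\varepsilon)\le\delta_{n,\varepsilon}\}$, which has probability at least $1-\eta$, I can replace $\rho(B)$ by $\delta_{n,\varepsilon}$. Under Theorem~\ref{thm:1}, $\delta_{n,\varepsilon}$ is exactly its stated bound.

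\textbf{Main obstacle.} I expect the hardest part to be technical rather than conceptual. Measurability of $x\mapsto\mathcal W_1(\P_{Y\mid x},m(x;\rho_G))$ and of the selected policies must be handled via the measurable-selection convention of Section~\ref{sec22}. Finiteness of first moments, assumed throughout, is needed so that the Kantorovich--Rubinstein step is valid. The mixture-convexity estimate also has to be written carefully with an explicit coupling so that the factor $\rho(B)$, rather than $1$, appears.
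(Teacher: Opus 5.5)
Your proposal is correct and follows essentially the same route as the paper's proof. The paper likewise uses the Kantorovich--Rubinstein plug-in inequality (its Lemma~\ref{lem:3}) and the mixture split $m(x;\rho)=\rho(G)\,m(x;\rho_G)+\rho(B)\,m(x;\rho_B)$ with a coupling that produces the factor $\rho(B)L_fD$; for the lower bound it compares with the pointwise oracle value and then applies the triangle inequality and the coupling of $\rho_G$ with $\theta^\star$, exactly as you do. The only differences are cosmetic: you compare directly with $\pi_{\theta^\star}(x)$ in the upper bound, and you use the infimum $V^\star(x)$ instead of assuming an oracle minimizer $z^\circ_x$ exists, which is if anything slightly more general.
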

\proof{Proof of Proposition~\ref{prop:1}} The proof is provided in the e-companion to this paper. \halmos\endproof
Proposition~\ref{prop:1} first dissects the gap between the out-of-sample performance of BCP given any posterior $\rho$ and the frequentist best-in-class policy. This gap decomposes into three factors. First, we use $\mathcal{E}(\rho_G)$ to indicate the impact of model misspecification, and it appears symmetrically on both sides of the gap; the second factor $\Delta(\rho_G)$ measures how the conditional posterior on the risk-sublevel set $G$ disperses around $\theta^\star$ and enters only in the lower bound, so a large $\Delta(\rho_G)$ allows the gap to be strictly negative. The last factor $\rho(B)$ measures the residual probability that a Bayesian framework puts on the risk-superlevel set $B$. This factor reveals the suboptimality of conventional likelihood-based Bayesian frameworks, in which the posterior concentrates around the \textit{pseudo-true parameter} $\theta_{\mathrm{KL}} \in \argmin_{\theta \in \Theta} \E_{\P_X}\KL{\P_{Y \mid X}}{f_{Y \mid X,\theta}}$. Unless the KL divergence is proved to be a consistent surrogate for the downstream optimization objective~\citep{Ho-Nguyen-2022-MS}, $\theta_{\mathrm{KL}}$ does not necessarily lie in $\Theta^\star$. In this case, $\rho(B)$ may fail to vanish regardless of sample size. 

For the Gibbs posterior, whose concentration has been established in Theorem~\ref{thm:1}, the third factor $\rho(B)$ vanishes exponentially fast in $n$. In a well-specified case, $\mathcal{E}(\rho_G)$ vanishes on both sides of the gap. When $\Theta^\star$ is a singleton and the risk-sublevel set $G_\varepsilon$ is convex, hence the optimizer $\theta^\star$ is identifiable, $\rho_n$ will asymptotically concentrate around $\theta^\star$, and the policy $\pi_{\rho_n}$ will recover the behavior of $\pi_{\theta^\star}$. When $\Theta^\star$ is not a singleton and $G_\varepsilon$ retains a union of disjoint neighborhoods around $\theta^\star \in \Theta^\star$, $\Delta(\rho_G)$ stays bounded away from zero for every $\varepsilon$; the lower bound is then strictly negative regardless of model misspecification, leaving potential for BCP to outperform the best-in-class frequentist policy through aggregation that the latter cannot achieve.
Our numerical experiments especially highlight this advantage of aggregation when $n$ is small, so that frequentist approaches can hardly distinguish the optimal estimator given sampling variability and noise.

Proposition~\ref{prop:1} shows that the bound does not preclude strict improvement by BCP; the variational perspective below identifies structural settings in which such improvement can occur.
To this end, consider the following variational optimization problem:
\begin{align}
     \mathcal{R}^\star = \inf_{\rho \in \mathcal{P}(\Theta)} \mathcal{R}(\rho) \leq \inf_{\rho \in \mathcal{P}_\delta(\Theta)} \mathcal{R}(\rho) = R^\star
     , \label{FP}
\end{align}
where $\mathcal{P}_\delta(\Theta)$ denotes the set of all Dirac measures in $\mathcal{P}(\Theta)$. The second equality in Problem~\eqref{FP} holds because each Dirac measure $\delta(\theta)$ corresponds to a choice of parameter $\theta$. Frequentist approaches that solve Problem~\eqref{FP} can be interpreted as a finite-dimensional restriction of $\inf_{\rho \in \mathcal{P}(\Theta)} \mathcal{R}(\rho)$ by limiting the feasible measure set to $\mathcal{P}_\delta(\Theta)$ for computational tractability in the finite-dimensional parameter space $\Theta$. Yet, restricting the variational problem to a single Dirac measure is often suboptimal~\citep{Rockafellar-2009-book}. This explains why $\pi_\rho$ derived from a nondegenerate posterior may outperform the frequentist optimizer if the posterior does not degenerate to a Dirac measure. To see this, consider a simple example: The true law $Y \mid X =x \sim \frac{1}{2}\mathcal{N}(-x, 1) + \frac{1}{2}\mathcal{N}(x, 1)$ is a Gaussian mixture, and if the predictive model is $m(x;\theta) = \mathcal{N}(\theta x , 1)$, the optimal posterior for any downstream problem is a mixture of Dirac measures: $\frac{1}{2}\delta(\theta =-1) + \frac{1}{2}\delta(\theta = 1)$, not a single Dirac measure $\delta(\theta_0)$ for some $\theta_0 \in \Theta$.

Our mechanism in Equation~\eqref{da-bayes} is a tractable approximation to Problem~\eqref{FP} that does not restrict $\mathcal{P}(\Theta)$ to $\mathcal{P}_\delta(\Theta)$, and that requires only pointwise evaluations of $R_n(\theta)$. To quantify the approximation error in decision-making, we construct an oracle-type out-of-sample guarantee for our policy in Theorem~\ref{thm:2} by bounding the excess risk against the measure-level oracle $\mathcal{R}^\star$ rather than $R^\star$ used in Proposition~\ref{prop:1}. In Section~\ref{sec:5_1}, we show that this variational problem has a tractable finite-dimensional form when the feasible set is limited to a variational family $\mathcal{Q} \subset \mathcal{P}(\Theta)$, not $\mathcal{P}_\delta(\Theta)$.
\begin{theorem}[Variational Excess Risk]\label{thm:2}
    Suppose Assumptions~\ref{A1} and~\ref{A6} hold and that the oracle measure $\rho^\star \in \argmin_{\rho \in \mathcal{P}(\Theta)} \mathcal{R}(\rho)$ exists and satisfies $\KL{\rho^\star}{\rho_0} < \infty$. 
    Let $\rho_n$ be the Gibbs posterior defined in Equation~\eqref{da-bayes} with a fixed $\lambda$ and $\mathcal{D}_n$.
    Then for any $\lambda >0$, any $\eta \in (0,1)$, the following inequality holds with probability at least $ 1- 2\eta$ over the draw of $\mathcal{D}_n$:
    \begin{equation*}
        \mathcal{R}(\Gibbs) - \mathcal{R}^\star \leq  \mathcal{E}(\rho_n)  + \Gamma^\star  +
        \frac{ \KL{\rho^\star}{\rho_0} + \log(1 / \eta)  }{\lambda n}
         + \frac{\lambda}{8} + \sqrt{\frac{\log (1/\eta)}{2n}},
    \end{equation*}
    where $\mathcal{E}(\rho_n) = 2L_y\E_{\P_X}[\W_1(\P_{Y\mid X}, m(X;\Gibbs ))]$, and $\Gamma^\star \equiv \mathbb{E}_{\rho^\star}\lrb{R(\theta)} - \mathcal{R}^\star$ is data-independent. In particular, taking $\lambda = O(n^{-1/2})$ leads to
    \begin{equation*}
        \mathcal{R}(\Gibbs) - \mathcal{R}^\star \leq 
        \mathcal{E}(\rho_n) + \Gamma^\star + O(n^{-1/2}).
    \end{equation*}
\end{theorem}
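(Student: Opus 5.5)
The plan is to split the excess risk into three pieces and bound each separately:
\[
\mathcal{R}(\Gibbs)-\mathcal{R}^\star = \bigl[\mathcal{R}(\Gibbs)-\E_{\Gibbs}[R(\theta)]\bigr] + \bigl[\E_{\Gibbs}[R(\theta)]-\E_{\rho^\star}[R(\theta)]\bigr] + \Gamma^\star .
\]
The last term is already $\Gamma^\star$ by definition. The first term compares the aggregate policy with the randomized (Thompson) policy. The middle term is a standard PAC-Bayes oracle inequality for the Gibbs posterior.

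\textbf{Aggregate versus randomized policy.} For the first bracket I need the pointwise bound
\[
\mathcal{R}(\rho) - \E_\rho[R(\theta)] \le 2L_y \E_{\P_X}[\W_1(\P_{Y|X}, m(X;\rho))] \quad\text{for any } \rho.
\]
Fix $x$ and write $V_\mu(z)=\E_\mu[c(z;Y)]$. By Assumption~\ref{A6} and Kantorovich--Rubinstein duality, $|V_{\P_{Y|X=x}}(z)-V_{m(x;\rho)}(z)|\le L_y \W_1$ uniformly in $z$. The conditional risk of $\pi_\rho$ is therefore at most
\[
V_{m(x;\rho)}(\pi_\rho(x)) + L_y\W_1 \le V_{m(x;\rho)}(\pi_\theta(x)) + L_y\W_1 \quad\text{for every } \theta,
\]
using that $\pi_\rho(x)$ minimizes $V_{m(x;\rho)}$. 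Averaging over $\theta\sim\rho$ and switching back to $\P_{Y|X=x}$ costs another $L_y\W_1$, which gives
\[
\E_{\P_{Y|X=x}} c(\pi_\rho(x);Y) \le \int \E_{\P_{Y|X=x}} c(\pi_\theta(x);Y)\,\rho(d\theta) + 2L_y\W_1 .
\]
Integrating over $\P_X$ and applying Fubini (the cost is bounded by Assumption~\ref{A1}) yields the claim. Measurability of $\theta\mapsto\pi_\theta(x)$ is handled by the measurable selection convention.

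\textbf{PAC-Bayes bound for the middle bracket.} Since $c\in[0,1]$, Hoeffding's lemma gives $\E e^{\lambda n(R-R_n)}\le e^{\lambda^2 n/8}$ for each fixed $\theta$. Integrating against $\rho_0$, applying Markov's inequality, and then the Donsker--Varadhan change of measure shows that with probability at least $1-\eta$, simultaneously for all $\rho$,
\[
\E_\rho R \le \E_\rho R_n + \frac{\KL{\rho}{\rho_0}+\log(1/\eta)}{\lambda n}+\frac{\lambda}{8}.
\]
This is the content of Lemma~\ref{lem:2}, which I would invoke. Evaluate it at $\rho=\Gibbs$. By the variational characterization in Lemma~\ref{lem:1}, $\Gibbs$ minimizes $\E_\rho R_n + \KL{\rho}{\rho_0}/(\lambda n)$, so the right-hand side is at most
\[
\E_{\rho^\star}R_n + \frac{\KL{\rho^\star}{\rho_0}+\log(1/\eta)}{\lambda n}+\frac{\lambda}{8}.
\]

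\textbf{Returning to the population risk.} It remains to replace $\E_{\rho^\star}R_n$ by $\E_{\rho^\star}R$. Because $\rho^\star$ is data-independent, $\E_{\rho^\star}R_n$ is an average of $n$ i.i.d.\ $[0,1]$-valued variables $\int c(\pi_\theta(x_i);y_i)\rho^\star(d\theta)$ with mean $\E_{\rho^\star}R$. One-sided Hoeffding then gives $\E_{\rho^\star}R_n\le\E_{\rho^\star}R+\sqrt{\log(1/\eta)/(2n)}$ with probability at least $1-\eta$. A union bound over this event and the PAC-Bayes event gives probability at least $1-2\eta$. Summing the three pieces yields the stated bound, and choosing $\lambda\asymp n^{-1/2}$ balances $\KL{\rho^\star}{\rho_0}/(\lambda n)$ against $\lambda/8$, giving $O(n^{-1/2})$.

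The main obstacle is the first step: it is the only place where the aggregate policy $\pi_\rho$, rather than the randomized policy, enters, and it requires the two-sided uniform-in-$z$ Wasserstein comparison together with the optimality of $\pi_\rho$ under the PPD. One point to check carefully is that the PAC-Bayes bound holds uniformly over $\rho$, so that plugging in the data-dependent $\Gibbs$ is legitimate. This is ensured because the Donsker--Varadhan step precedes the choice of $\rho$.
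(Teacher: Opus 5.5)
Your proposal is correct and follows the paper's proof essentially step for step. The paper likewise (i) uses the two-sided $\mathcal{W}_1$ comparison with the optimality of $\pi_{\rho_n}$ under the PPD (its Lemma EC.3(b)) to get $\mathcal{R}(\rho_n) \le \E_{\rho_n}[R(\theta)] + \mathcal{E}(\rho_n)$, (ii) applies the PAC-Bayes bound at $\rho_n$, (iii) uses the Gibbs variational characterization to replace $\rho_n$ by $\rho^\star$, and (iv) applies Hoeffding to the data-independent $\rho^\star$ with a union bound, giving probability at least $1-2\eta$.
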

\proof{Proof of Theorem~\ref{thm:2}} The proof is provided in the e-companion to this paper. \halmos\endproof
Theorem~\ref{thm:2} establishes an $O(n^{-1/2})$ excess risk rate for the Gibbs posterior $\rho_n$ against the oracle measure $\rho^\star$ up to two fixed residuals. The bound contains two non-vanishing residuals beyond the parametric rate. The first factor $\mathcal E(\rho_n)$ reflects model misspecification; under well-specification and asymptotic concentration of $\rho_n$ on $\Theta^\star$, $\mathcal E(\rho_n)$ vanishes as $n \to \infty$. The second term $\Gamma^\star$ is the oracle aggregate gap, a deterministic, data-independent quantity that captures the irreducible bias. This residual is the price of computational tractability: our framework requires only pointwise evaluation of $R_n(\theta)$ in Equation~\eqref{da-bayes} and never evaluates $\mathcal{R}_n(\rho) = \tfrac{1}{n}\sum_{i=1}^n c(\pi_\rho(x_i); y_i)$ or the first variation of $\mathcal{R}$ with respect to $\rho$, sidestepping the analytical and computational obstacles that would otherwise arise.

Theorem~\ref{thm:2} also reveals that the inverse temperature $\lambda$ governs a bias--variance tradeoff in the parametric component of the bound. A small $\lambda$ enforces strong KL regularization toward the prior $\rho_0$, reducing the variance term $\lambda/8$ but inflating the complexity term $\KL{\rho^\star}{\rho_0}/(\lambda n)$, which manifests as underfitting bias. A large $\lambda$ sharpens the posterior around empirical risk minimizers and reverses this tradeoff. Balancing the two yields the optimal scaling $\lambda = C_\lambda n^{-1/2}$ for some $C_\lambda > 0$, so that the weight placed on data relative to the prior grows as $O(n^{1/2})$ with sample size. The residuals $\mathcal{E}(\rho_n)$ and $\Gamma^\star$ lie outside this tradeoff: the former is determined by the predictive model class and the latter is data-independent, so neither is materially affected by the choice of $\lambda$ once $\rho_n$ has sufficiently concentrated on $\Theta^\star$. For moderate $n$, we therefore recommend selecting $\lambda$ in a decision-focused manner to improve the practical performance of $\pi_\rho$, analogous to tuning regularization in frequentist methods. 
\begin{remark}[BCO Oracle Advantage]\label{rem:oracle}
Since $\mathcal{P}_\delta(\Theta) \subset \mathcal{P}(\Theta)$, the variational inequality in Problem~\eqref{FP} gives $\mathcal{R}^\star \leq R^\star$ directly: the measure-level comparator is no worse than the Dirac-restricted comparator. When the cost is affine in $y$, i.e., $c(z;y) = a(z) + b(z)^\top y$, and the model is a point predictor, this gap has a clean structural interpretation. By linearity, $\E_\rho[c(z;f_\theta(x))] = c(z;\bar{f}_\rho(x))$ where $\bar{f}_\rho(x) \coloneqq \E_\rho[f_\theta(x)]$, so the BCP under any posterior $\rho$ optimizes against the posterior predictive mean. As $\rho$ ranges over $\mathcal{P}(\Theta)$, the mean $\bar{f}_\rho$ sweeps out $\mathrm{conv}(\mathcal{F}_\Theta)$, which is strictly richer than $\mathcal{F}_\Theta$ when the model class is nonlinear. The strict inequality $\mathcal{R}^\star < R^\star$ then holds whenever the true conditional mean $\mu(X) \in \mathrm{conv}(\mathcal{F}_\Theta)$ $\P_X$-a.e.\ and no single $\theta \in \Theta$ achieves the oracle decision $\pi^\star(X)$ $\P_X$-a.e. This is the structural mechanism through which BCO can strictly improve over any frequentist policy under affine cost.
\end{remark}

\section{Computational Methods}\label{sec:5}
The operational likelihood introduces a computational challenge: unlike standard Bayesian frameworks in which conjugate likelihood--prior pairs yield closed-form posteriors, the operational likelihood generally leads to a Gibbs posterior without an analytical density.
We therefore turn to approximate Bayesian inference.
The differentiability of the downstream optimization problem provides a natural criterion for selecting algorithms.  
When the cost gradient $\nabla_\theta c(\pi_\theta(x);y)$ exists and is cheap to evaluate, we tailor a variational inference scheme that recasts posterior approximation as a tractable optimization problem (Section~\ref{sec:5_1}).  
When the gradient is undefined or prohibitive to evaluate, as is common in combinatorial and nonconvex settings, we propose an MCMC sampler that requires only pointwise evaluations of the operational likelihood (Section~\ref{sec:5_2}).

\subsection{Variational Inference for Differentiable Problems}\label{sec:5_1}
Variational inference refers to a family of methods that approximate an intractable posterior by optimizing over a parametric family of distributions, thereby recasting Bayesian inference as an optimization problem~\citep{Blei-2017-JASA}. Note that in our algorithm the parametric family governs the posterior approximation, not the likelihood itself: the latter retains its decision-focused form throughout.

Let $\mathcal{Q} \coloneqq \{q_\omega : \omega \in \Omega\} \subset \mathcal{P}(\Theta)$ denote a parametric density family.  We approximate the Gibbs posterior $\Gibbs$ by the best-in-class candidate under the reverse KL divergence:
\begin{equation}\label{KLVI}
  q_{\omega^\star}
  \coloneqq \argmin_{q_\omega \in \mathcal{Q}}\;
  \KL{q_\omega}{\Gibbs}.
\end{equation}
Because Problem~\eqref{KLVI} minimizes the reverse KL divergence $\KL{q_\omega}{\Gibbs}$, the optimizer $q_{\omega^\star}$ exhibits the well-known mode-seeking behavior and may underestimate posterior uncertainty, especially when a unimodal family $\mathcal{Q}$ is used to approximate a multimodal posterior.  Alternative divergence measures can mitigate this limitation but generally incur additional computational cost; we therefore adopt Problem~\eqref{KLVI} for its tractability.

Indeed, Problem~\eqref{KLVI} admits the following reformulation:
\begin{align}
  \argmin_{q_\omega \in \mathcal{Q}}\;\KL{q_\omega}{\Gibbs}
  &= \argmin_{q_\omega \in \mathcal{Q}}\;
     \E_{q_\omega}\!\bigl[\log q_\omega(\theta)
       - \log\Gibbs(\theta)\bigr]
     \notag\\
  &= \argmin_{q_\omega \in \mathcal{Q}}\;
     \E_{q_\omega}\!\bigl[\log q_\omega(\theta)
       + \lambda n\,R_n(\theta) - \log\rho_0(\theta)\bigr]
     + \log I_{n,\lambda}
     \notag\\
  &= \argmin_{q_\omega \in \mathcal{Q}}\;\Bigl\{
     \E_{q_\omega}[R_n(\theta)]
     + \tfrac{1}{\lambda n}\,\KL{q_\omega}{\rho_0}
     \Bigr\},
  \label{VI}
\end{align}
where $I_{n,\lambda} = \int_{\Theta}\exp\lra{-\lambda n\,R_n(\theta)}\,\rho_0(\theta)\,\mathrm{d}\theta$ is the normalizing constant independent of $q_\omega$ and hence drops from the optimization. Problem~\eqref{VI} reveals that the variational problem is again a finite-dimensional restriction: the feasible set reduces from $\mathcal{P}(\Theta)$ to $\mathcal{Q}$, not to $\mathcal{P}_\delta(\Theta)$, while the objective retains the same structure as Problem~\eqref{bissiri_eq}.

When the estimate-then-optimize pipeline is differentiable, that is, when $\nabla_\theta c(\pi_\theta(x);y)$ exists, and hence so does $\nabla_\theta R_n(\theta)$, Problem~\eqref{VI} can be solved by first-order methods whose per-iteration cost matches that of the frequentist IEO alternative. In our numerical experiments, we implement a mean-field Gaussian family (Section~\ref{sec:6_1}) and a mean-field Laplace family (Section~\ref{sec:6_2}) for their computational tractability. 
When the prior and variational family are both mean-field Gaussian (or both mean-field Laplace), $\KL{q_\omega}{\rho_0}$ is available in closed form, eliminating the need for stochastic approximation of the regularizer.
We summarize this gradient-descent procedure in Algorithm~\ref{algo:1} of~\ref{EC_algorithm}.

Once $q_{\omega^\star}$ is obtained, the PPD $m(x;q_{\omega^\star})$ is fully determined for every $x \in \mathcal{X}$.  The remaining step is to evaluate the BCP $\pi_{q_{\omega^\star}}(x)$ by solving, for each $x$, a stochastic program whose expectation is taken over $m(x;q_{\omega^\star})$.  We approximate this program via SAA:
\begin{equation*}
  \pi_{q_{\omega^\star}}(x)
  \approx \argmin_{z \in \mathcal{Z}}\;
  \frac{1}{K}\sum_{k=1}^{K} c(z;y_k),
\end{equation*}
where $\theta_k \sim q_{\omega^\star}$ and
$y_k = f_{\theta_k}(x)$ is the corresponding prediction, so that each $y_k$ constitutes a draw from $m(x;q_{\omega^\star})$.  The number of scenarios $K$ is a user-specified quantity that trades off prescriptive accuracy against computational cost.  Since the PPD is fully determined after inference, scenario reduction techniques~\citep{Henrion-2022-MP, Bertsimas-2023-OR} can be applied to refine the scenario set; we leave this extension for future work.

\subsection{MCMC for Nondifferentiable Problems}\label{sec:5_2}
Our Gibbs posterior assigns probability density over $\Theta$ based on the operational likelihood $L_\lambda(\mathcal{D}_n\mid\theta)$, requiring only pointwise evaluation of $R_n(\theta)$.
This gradient-independence enables MCMC sampling, a workhorse of Bayesian statistics.
In this work, we tailor a Metropolis-Hastings (MH) algorithm for BCO, summarized in Algorithm~\ref{algo:2} of~\ref{EC_algorithm}, because of its simplicity and generality.

Derivative-free MH algorithms mainly use a random-walk proposal and an accept-reject mechanism to sample from complex distributions. It is well established, however, that na\"ive random-walk MH suffers from a curse of dimensionality: in high-dimensional spaces the acceptance rate collapses and the chain fails to produce effective samples~\citep{Roberts-2001-SS}. 
To improve the efficiency of MH algorithms in our framework for expressive models, we consider the following practical techniques. 

First, we adopt the adaptive-scaling scheme~\citep{Garthwaite-2016-CSTM} and apply it to diagonal Gaussian proposals, avoiding the computation of a high-dimensional covariance matrix. Second, following the standard practice of MCMC methods, we construct multiple chains using different initial points to explore the multimodality of the posterior and discard the first $T_b$ samples (burn-in) from each chain of length $T$ to reduce initialization bias and better approximate stationarity. Third, to determine the inverse temperature, i.e., the constant $C_\lambda$ characterizing $\lambda n = C_\lambda n^{1/2}$ following Theorem~\ref{thm:2}, we gradually increase the value of $C_\lambda$ until the traces of empirical decision loss stabilize. This algorithm is applied to the numerical experiment in Section~\ref{sec:6_3}.

After executing the MH algorithm, we use the sample set, denoted by $\Theta_{\mathrm{MH}}$, to construct the SAA model for BCP. It is straightforward to use all the post-burn-in samples, setting the number of scenarios $K=|\Theta_{\mathrm{MH}}|=N_c(T-T_b)$, where $N_c$ is the number of chains, and $w_k=1/K$. However, a smaller scenario set may be preferred for computational reasons. MCMC thinning~\citep{South-2022-ARSA} offers one principled approach. For simplicity, we extract $K$ samples at regular intervals. A more theoretically sound alternative is the \emph{support points} approach~\citep{Mak-2018-AOS}; we regard this as a promising direction for future work.
\section{Numerical Experiments}\label{sec:6}

\subsection{Experiment 1: Two-Stage Shipment}\label{sec:6_1}
Our first numerical experiment is based on the synthetic problem of~\cite{Bertsimas-2020-MS}, which corresponds to a two-stage linear program with recourse that can be formulated as follows:
\begin{equation*}
    \min_{z \geq 0}\; c(z;y) = \min_{z \geq 0}\; c_q^\top z + Q(z,y),
\end{equation*}
where $z \in \R^4_+$ denotes the first-stage ordering amounts, $y\in \R^{12}$ the uncertain demand, $c_q \in \R^4$ the unit-order cost, and $Q(\cdot,y): \mathcal{Z} \rightarrow \R$ the second-stage recourse function. We adopt the original setting for the cost coefficients and data-generating process.

We design our experiments as follows. First, we consider $n\in\lrc{50,100,200, 400}$ to focus on the small-to-moderate sample regimes. 
Based on the original setting, we append noisy dimensions to $X$ that have no prescriptive power to vary the signal-to-noise ratio in order to illustrate the robustness of different approaches against model uncertainty. 
The original ARMA(2,2) process generates a 3-dimensional $X$; we append additional independent standard Gaussian features so that the number of noisy dimensions ranges over $\{0, 3, 6, 9\}$ (0 corresponds to the original setting).

For each run, we generate an independent validation set with 50 samples to facilitate hyperparameter tuning and early stopping. Once the training finishes, we use an independently generated test set $\mathcal{T}$, with 5,000 samples, to evaluate the out-of-sample performance of different approaches. Given a data-driven policy $\pi$ that prescribes $z$, we compute 
$
    \hat{R} = \frac{1}{\abs{\mathcal{T}}} \sum_{(x_i,y_i) \in \mathcal{T}} c(\pi(x_i); y_i)
$
as the out-of-sample metric. Note that this metric evaluates the decision quality of $\hat\pi(x_i)$ by solving the recourse problem $Q$ under the realization $y_i$. We repeat 50 independent runs for each sample size.

Five policies, drawn from three frameworks, are considered in this experiment. The first is the exponential Nadaraya-Watson kernel estimator baseline, denoted by ENW, which assigns weights to training samples conditional on a new context and solves the corresponding weighted SAA. The second one is the frequentist IEO baseline, which solves the empirical risk minimization in Equation~\eqref{erm-g} by gradient descent. The third is BCO, which yields three policies: TPS, MAP, and BCP, derived from the same Gibbs posterior with a mean-field Gaussian variational family. For both IEO and BCO, we use a simple piecewise affine model $f_\theta(X) = \mathrm{ReLU}(WX+b)$ where $\mathrm{ReLU}$ refers to the rectified linear unit. Details on each framework’s configuration, including kernel bandwidth, learning rate, batch size, and early-stopping criterion, are provided in~\ref{ECEXP1}.



\begin{figure}[htbp]
\centering
\caption{Average Out-of-Sample Cost of Five Policies in the Two-Stage Shipment Problem}\label{fig:case1}
\includegraphics[width=0.95\textwidth]{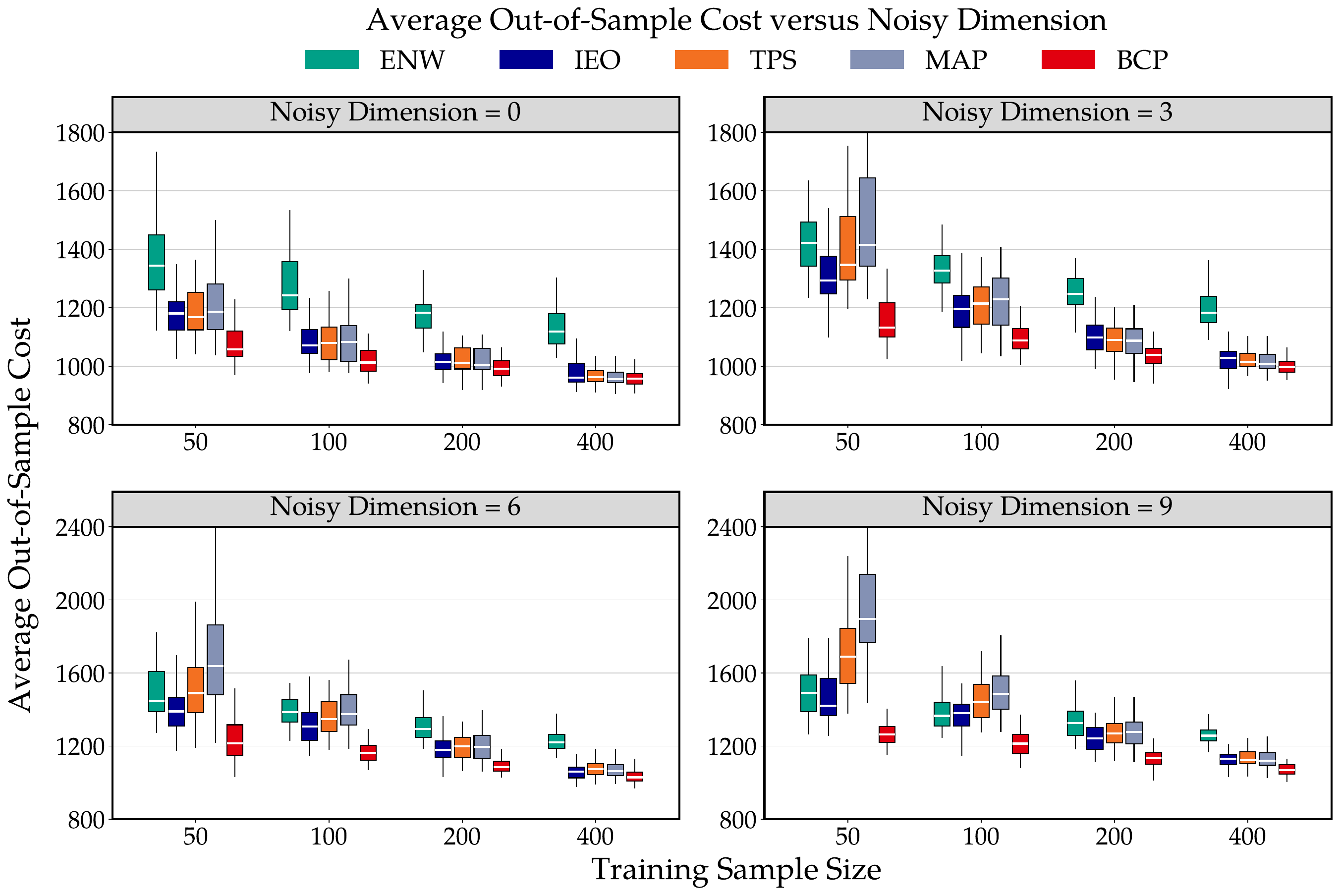}
\begin{minipage}{\textwidth}
\vskip 0.1cm
\small
\textit{Notes}. ENW: exponential Nadaraya-Watson kernel regressor; IEO: integrated estimate-optimize; TPS: Thompson policy; MAP: maximum a posteriori; BCP: Bayesian contextual policy. All results over 50 independent runs. TPS, MAP, and BCP share the same posterior. Box: 25th-75th percentiles; horizontal box line: median.
\end{minipage}
\end{figure}

Figure~\ref{fig:case1} reports the average out-of-sample cost across four sample sizes and four noisy dimensions. All policies improve with more training data and deteriorate with more noisy dimensions, reflecting lower signal-to-noise ratios and elevated model uncertainty. Overall, BCP outperforms IEO, the frequentist alternative, and ENW, which also leverages distributional estimates.

Figure~\ref{fig:case1} clearly reveals the posterior concentration established by Theorem~\ref{thm:1} in the empirical behavior of TPS and MAP compared to BCP.
Regardless of the noisy dimension, the medians of TPS and MAP costs converge to that of BCP. This pattern is consistent with the exponential concentration of $\Gibbs$ around the best-in-class parameter: as $\lambda n$ grows in $O(n^{1/2})$, the posterior becomes increasingly sharp, so a single posterior draw (TPS) or the posterior mode (MAP) becomes an increasingly accurate proxy of the full PPD used by BCP. Conversely, in the small-sample regime where the posterior is sufficiently wide, TPS draws parameter vectors far from the high-density region of the posterior and MAP commits to a mode that is not representative of the bulk of the posterior mass, resulting in inferior and volatile performance. In this sense, the gap between TPS (or MAP) and BCP can be interpreted as an empirical signature of model uncertainty that is discarded when the posterior is collapsed to a single parameter vector.
\begin{figure}[htbp]
\centering
\caption{Normalized Out-of-Sample Performance of Five Policies using BCP Median as Baseline in the Two-Stage Shipment Problem}\label{fig:case1b}
\includegraphics[width=0.7\textwidth]{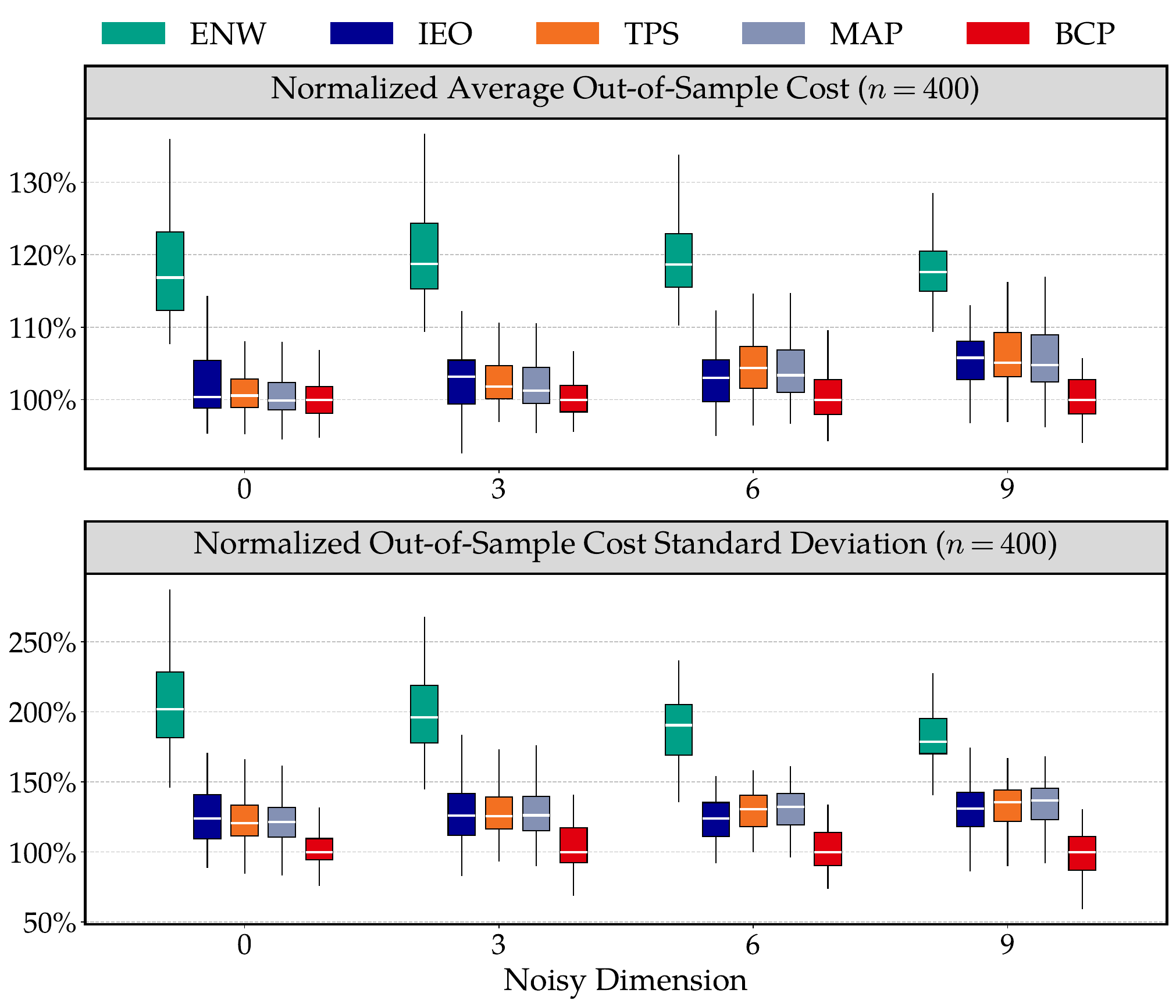}
\begin{minipage}{\textwidth}
\vskip 0.1cm
\small
\textit{Notes}. Out-of-sample performance normalized by BCP's median at $n=400$, per each noisy dimension. Top: normalized average out-of-sample cost. Bottom: normalized standard deviation. Both computed over 50 independent runs. TPS, MAP, and BCP share the same Gibbs posterior.
\end{minipage}
\end{figure}

BCP consistently outperforms IEO across all settings, with the largest gains at $n=50$ under severe noise, where model uncertainty is highest and a single empirical risk minimizer is least reliable. This highlights the benefit of the decision-focused belief updating mechanism combined with Bayesian model aggregation: by propagating the posterior mass through the PPD rather than committing to a single estimator, BCP hedges against model uncertainty that frequentist approaches cannot capture, echoing the Bayesian-frequentist gap characterized in Proposition~\ref{prop:1}. Yet BCP's advantage extends beyond average cost reduction. 

Figure~\ref{fig:case1b} normalizes out-of-sample performance at $n = 400$ using BCP's median as baseline, reporting average cost and standard deviation for all policies across four noisy dimensions. The average-cost advantage of BCP is modest under no noise, consistent with posterior concentration, but grows to at least 5\% as the noisy dimension increases to 9, confirming that Bayesian aggregation remains beneficial even at moderate sample sizes.

More strikingly, the bottom panel of Figure~\ref{fig:case1b} shows that the standard deviation of the out-of-sample cost for ENW, IEO, TPS, and MAP is systematically $\sim$125\% (175\% for ENW) regardless of noise level. This variance reduction is a known consequence of Bayesian aggregation~\citep{Satop-2022-OR}, naturally inherited by BCP. Even when the Gibbs posterior is sufficiently concentrated around the frequentist minimizer, the residual posterior dispersion still yields a predictive distribution strictly richer than any single-point predictor. The resulting policy is therefore less sensitive to training sample realizations. For decision-makers concerned with tail behavior, this stability may be as valuable as the reduction in expected cost.

\subsection{Experiment 2: Newsvendor}\label{sec:6_2}
We now consider an unconstrained contextual newsvendor problem with cost function:
\begin{equation*}
    c(z;y) \coloneqq c_u\lra{y-z}^+ + c_o\lra{z-y}^+,
\end{equation*}
where $c_u$ and $c_o$ are the unit underage and overage costs, respectively. We denote the critical fractile $\tau = c_u / (c_u + c_o)$ and set $c_u + c_o = 1$. We follow~\cite{Shen-2024-JMLR} to construct three data-generating processes reflecting distinct feature-demand characteristics. Their conditional $\tau$-quantile functions $f(\tau\mid X)$ are defined as:
\begin{align}
f(\tau\mid X) &= 2 A^\top X + F^{-1}_{t_3}(\tau), \tag{DGP-1}\\
f(\tau\mid X) &= \exp\lra{0.1\cdot A^\top X} + \abs{\sin\lra{\pi B^\top X}} \Phi^{-1}(\tau), \tag{DGP-2}\\
f(\tau\mid X) &= 3 X_1 + 4\lra{X_2-0.5}^2 + 2 \sin\lra{\pi X_3} - 5 \abs{X_4 - 0.5} \notag \\
              &+ \exp\lra{0.1\lra{B^\top X - 0.5}} \Phi^{-1}(\tau) \tag{DGP-3}
\end{align}
where $\pi$ exceptionally denotes the mathematical constant, $X \sim \mathrm{Uniform}(0,1)^8$, $F^{-1}_{t_3}(\tau)$ is the $\tau$-quantile of a Student’s \(t_3\) distribution, $\Phi^{-1}(\tau)$ the standard normal quantile, and $A, B \in \R^8$ are factor vectors. DGP-1 tests heavy-tail handling under linear location structure; DGP-2 tests separation of location and scale features; DGP-3 tests recovery of sparse nonlinear and periodic structure under heteroskedasticity.

We consider $n\in\{256, 512, 1024, 2048, 4096\}$ and $\tau\in\{0.50, 0.75, 0.95\}$ spanning small- to large-sample regimes and symmetric through highly asymmetric loss structures.
Since a separate model is trained per $\tau$, we use a feedforward network mapping $8$-dimensional inputs to scalar demands via two hidden layers of width $128$ with ReLU activations (17,793 parameters). For the unconstrained newsvendor, $f_\theta(x) = \pi_\theta(x)$, so frequentist IEO reduces to the decision rule of~\cite{Ban-2019-OR}. An independent validation set of size $n/4$ is used for early stopping and regularization selection. A mean-field Laplace family is used for both the prior and posterior.

We compared only IEO and BCP, omitting nonparametric baselines given the established advantage of neural networks in newsvendor problems~\cite{Han-2025-MS}. Out-of-sample performance is evaluated on a $2^{15}$ test set generated via a scrambled Sobol sequence, reporting the optimality gap relative to the oracle policy (since the quantile function is known). Results are averaged over 50 independent runs.~\ref{ECEXP2} lists all the omitted details.



\begin{table}[htbp]
\centering
\caption{Optimality Gap ($\mu\% \pm \sigma \%$) in the Newsvendor Problem}
\label{tab:newsvendor}
\begin{tabular}{lcccccc}
\toprule
\multirow{2}{*}{$\tau$} & \multirow{2}{*}{Policy}
& \multicolumn{5}{c}{Sample size $n$} \\
\cmidrule(lr){3-7}
& & $256$ & $512$ & $1024$ & $2048$ & $4096$ \\
\midrule
\multicolumn{7}{c}{DGP-1} \\
\multirow{2}{*}{$0.50$}
 & IEO & $26.3\pm29.9$ & $10.5\pm4.09$ & $6.47\pm4.31$ & $2.42\pm1.64$ & $1.04\pm0.74$ \\
 & \bcohl{BCP} & \bcohl{$5.18\pm2.33$} & \bcohl{$2.10\pm1.04$} & \bcohl{$1.22\pm0.33$} & \bcohl{$0.94\pm0.35$} & \bcohl{$0.76\pm0.27$} \\
\multirow{2}{*}{$0.75$}
 & IEO & $22.5\pm22.7$ & $13.9\pm6.11$ & $7.79\pm4.57$ & $2.52\pm2.26$ & $1.28\pm0.96$ \\
 & \bcohl{BCP} & \bcohl{$5.11\pm1.86$} & \bcohl{$2.90\pm1.30$} & \bcohl{$1.39\pm0.61$} & \bcohl{$0.97\pm0.35$} & \bcohl{$0.79\pm0.24$} \\
\multirow{2}{*}{$0.95$}
 & IEO & $50.4\pm40.6$ & $31.1\pm15.9$ & $13.3\pm7.61$ & $5.16\pm3.13$ & $3.41\pm2.23$ \\
 & \bcohl{BCP} & \bcohl{$7.70\pm5.36$} & \bcohl{$5.35\pm2.68$} & \bcohl{$3.47\pm2.22$} & \bcohl{$2.38\pm1.43$} & \bcohl{$1.82\pm0.75$} \\
\midrule
\multicolumn{7}{c}{DGP-2} \\
\multirow{2}{*}{$0.50$}
 & IEO & $31.0\pm22.9$ & $14.7\pm9.13$ & $8.26\pm5.59$ & $3.04\pm3.05$ & $0.98\pm0.81$ \\
 & \bcohl{BCP} & \bcohl{$2.49\pm1.38$} & \bcohl{$1.33\pm0.75$} & \bcohl{$0.68\pm0.30$} & \bcohl{$0.42\pm0.14$} & \bcohl{$0.30\pm0.14$} \\
\multirow{2}{*}{$0.75$}
 & IEO & $37.7\pm23.3$ & $24.6\pm8.19$ & $15.4\pm5.08$ & $10.2\pm1.75$ & $8.97\pm0.86$ \\
 & \bcohl{BCP} & \bcohl{$10.2\pm2.77$} & \bcohl{$9.22\pm1.07$} & \bcohl{$8.53\pm0.45$} & \bcohl{$8.37\pm0.39$} & \bcohl{$8.14\pm0.25$} \\
\multirow{2}{*}{$0.95$}
 & IEO & $97.9\pm52.6$ & $63.0\pm16.7$ & $41.6\pm11.8$ & $30.7\pm4.93$ & $25.9\pm2.44$ \\
 & \bcohl{BCP} & \bcohl{$31.2\pm4.94$} & \bcohl{$28.7\pm3.54$} & \bcohl{$27.1\pm2.16$} & \bcohl{$25.5\pm1.10$} & \bcohl{$24.9\pm0.78$} \\
\midrule
\multicolumn{7}{c}{DGP-3} \\
\multirow{2}{*}{$0.50$}
 & IEO & $33.9\pm13.2$ & $25.6\pm6.38$ & $17.4\pm4.88$ & $12.2\pm3.59$ & $6.71\pm3.46$ \\
 & \bcohl{BCP} & \bcohl{$21.3\pm2.19$} & \bcohl{$15.4\pm3.96$} & \bcohl{$7.91\pm2.61$} & \bcohl{$4.31\pm1.13$} & \bcohl{$3.01\pm0.49$} \\
\multirow{2}{*}{$0.75$}
 & IEO & $32.6\pm9.77$ & $25.8\pm4.16$ & $18.9\pm4.19$ & $14.1\pm3.50$ & $7.12\pm3.14$ \\
 & \bcohl{BCP} & \bcohl{$22.0\pm2.82$} & \bcohl{$18.2\pm3.16$} & \bcohl{$10.6\pm4.35$} & \bcohl{$4.96\pm1.35$} & \bcohl{$2.85\pm0.65$} \\
\multirow{2}{*}{$0.95$}
 & IEO & $51.9\pm17.2$ & $41.3\pm12.4$ & $25.6\pm6.14$ & $17.8\pm3.75$ & $9.31\pm3.91$ \\
 & \bcohl{BCP} & \bcohl{$23.7\pm3.21$} & \bcohl{$21.6\pm2.27$} & \bcohl{$19.2\pm1.92$} & \bcohl{$12.4\pm3.98$} & \bcohl{$6.46\pm2.99$} \\
\bottomrule
\end{tabular}
\begin{minipage}{\textwidth}
\vskip 0.3cm
\small
\textit{Notes}. Each entry is computed from 50 independent trials and reports the mean and standard deviation $(\mu\pm\sigma)$ of the optimality gap.
\end{minipage}
\end{table}

Table~\ref{tab:newsvendor} shows that BCP outperforms IEO in all settings, with the most pronounced advantage at $n \leq 2048$. 
Note that $\ell_1$-regularized regressor (IEO here) is a Bayesian MAP estimate under a Laplace prior. 
When $n$ is small and many parameter vectors are empirically indistinguishable, BCP's decision-focused aggregation over the posterior reduces mean gap by at least 5\% for $n \leq 1024$.

BCP's robustness against sampling variability is further highlighted by the standard deviation of the optimality gap in Table~\ref{tab:newsvendor} (and Figure~\ref{fig:intro}). Even with a homogeneous $\mathrm{Laplace}(0,1)$ prior, BCP hedges effectively against sampling variability, whereas IEO remains unstable despite $\ell_1$-regularization and validation-based model selection.
This robustness persists even at $n=4096$ across all settings. Although the two policies share a similar convergence order under standard assumptions, their practical performance differs substantially. As a byproduct, these results demonstrate that DFL is applicable not only large-sample regimes but also under small- and moderate-sample conditions, provided that model uncertainty and sampling variability are properly addressed.

Across critical fractiles, DGP-1 and DGP-3 exhibits similar performance at $\tau = 0.50$ and $0.75$, while the gap of each policy grows under DGP-2. 
This is because the oscillatory scale component $\abs{\sin\lra{\pi B^\top X}}\Phi^{-1}(\tau)$ in DGP-2 vanishes at $\tau = 0.50$ (where $\Phi^{-1}(0.5)=0$) but activates at $\tau = 0.75$.
At $\tau=0.95$, residual gaps reflect model misspecification from the heteroscedastic architecture's inability to capture the oscillatory scale; BCP nonetheless converges faster to the best-in-class performance than IEO.

\subsection{Experiment 3: Return-Constrained Portfolio}\label{sec:6_3}
To validate our framework on a nondifferentiable problem, we adapt the contextual portfolio problem of~\cite{SPO}: the decision-maker minimizes the portfolio variance, subject to a hard return constraint, namely:
\begin{align}
    \argmin_{\pi}         \quad & \E_{\P}\,[\pi^\top(X) \, \Sigma \,  \pi(X)] \\
    \text{s.t.} \quad  & \P\lra{ Y^\top \pi(X) \geq U_0  } \geq \alpha, \; \mathbf{1}^\top \pi(X) \leq 1, \notag
\end{align}
where $X \in \mathcal{X} \subset \R^5$ is the context, $Y \in \mathcal{Y} \subset \R^{50}$ the uncertain return, $\pi: \mathcal{X} \rightarrow \mathcal{Z} \subset \R^{50}$ the policy, $\Sigma \in \R^{50 \times 50}$ a known covariance matrix, $\alpha \in [0,1]$ the target feasibility level, and $U_0$ the target return. A dataset $\mathcal{D}_n$ drawn from $\P$ is available.~\ref{ECEXP3} provides the details of the data-generating process. 

We consider $n\in\{100, 200, 400\}$, as larger samples do not materially affect feasibility for any policy. Each configuration is repeated 50 times with independent training sets; Evaluation uses a fixed test set $\mathcal T$ of 10,000 samples drawn from the same distribution.
Two metrics are reported. The first is marginal feasibility: $F_\pi = \frac{1}{\abs{\mathcal T}} \sum_{(x_i,y_i) \in \mathcal T}\mathbb{I}\lrc{ y_i^\top \pi(x_i) \geq U_0 } $. Second is the relative regret of feasible samples:
\begin{equation*}
G_{\pi} = \frac{ \sum_{(x_i, y_i) \in \mathcal{T}} \mathbb{I}\lrc{ y_i^\top \pi(x_i) \geq U_0 } \pi^\top(x_i) \Sigma\, \pi(x_i) }
{ \sum_{(x_i, y_i) \in \mathcal{T}} \mathbb{I}\lrc{ y_i^\top \pi(x_i) \geq U_0 } c_i^\star } ,
\end{equation*}
where $c_i^\star$ is the oracle objective for sample $y_i \in \mathcal{T}$. We set $\alpha = 0.95$ and $U_0 = 0.09$, chosen so that the constraint is non-trivial yet the oracle remains feasible on nearly all test samples.

Three policies are compared. The first baseline pairs the ENW estimator with a CVaR model for feasibility, following~\cite{Rahimian-2023-SIAMOPT} and~\cite{Lin-2022-POMS}. The second baseline is frequentist IEO from~\cite{Mandi-2025-NIPS}, which trains point predictors and produces risk-averse point predictions rather than combining with a risk-averse model. BCP is paired with the same CVaR model, demonstrating its compatibility with risk-averse optimization. Both IEO and BCP use a one-hidden-layer network of width 200 (11,250 parameters). See~\ref{ECEXP3} for policy construction and training details. 

\begin{figure}[htbp]
\centering
\caption{Out-of-Sample Feasibility and Relative Regret of ENW, IEO and BCP in the Return-Constrained Portfolio Problem}\label{fig:case3}
\includegraphics[width=1\textwidth]{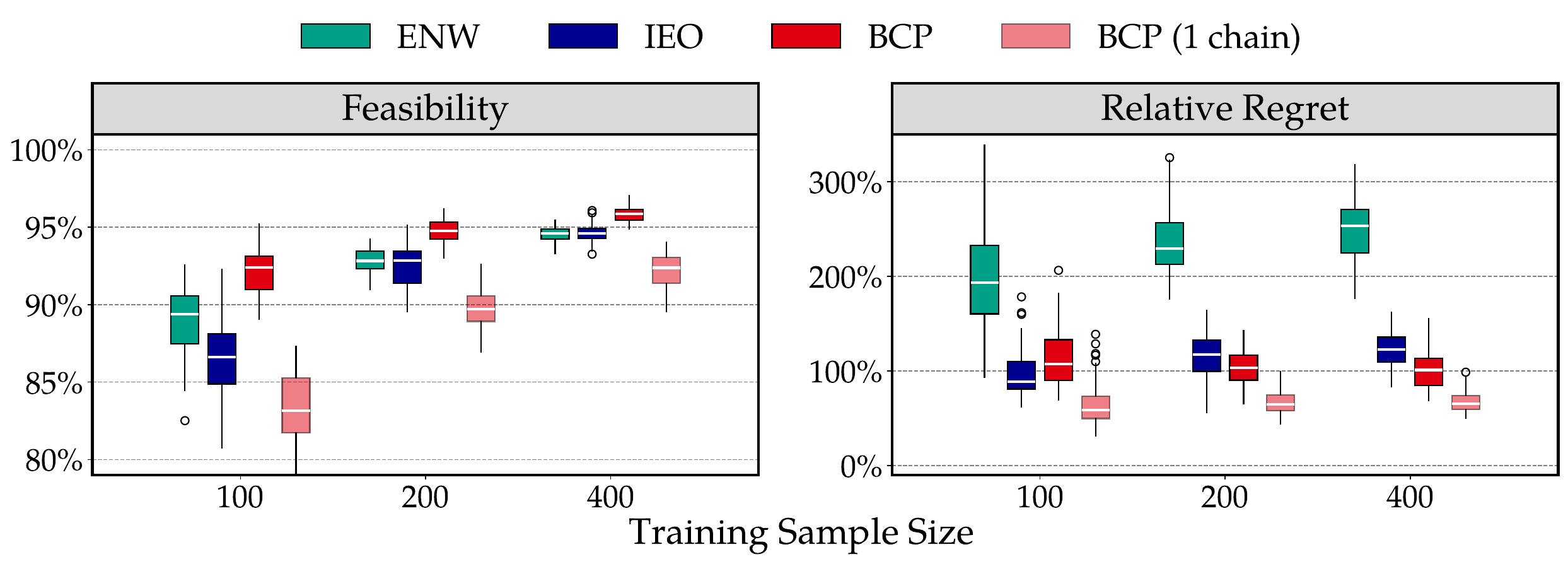}
\begin{minipage}{\textwidth}
\vskip 0.2cm
\small
\textit{Notes}. Both ENW and BCP prescribe decisions via a CVaR model given distributional estimates. IEO uses a deterministic optimization model given point predictions. BCP aggregates posterior samples from four independent MCMC chains, whereas BCP (1 chain) uses samples from a single MCMC chain. Both metrics are computed from 50 independent trials per sample size.
\end{minipage}
\end{figure}

The left panel of Figure~\ref{fig:case3} reports out-of-sample feasibility. BCP reaches the target $\alpha = 0.95$ at $n=400$ and dominates both baselines across all sample sizes, while ENW and IEO remain statistically indistinguishable. This is consistent with their shared failure to propagate model uncertainty: ENW reweights training samples and IEO commits to a point predictor, whereas BCP evaluates constraint against the PPD $m(x;\Gibbs)$, which absorbs posterior dispersion that frequentist alternatives discard. The substantially lower feasibility of BCP (1 chain) isolates a separate mechanism. 
Our MH algorithm accepts a proposal only if its empirical feasibility exceeds $\alpha$ on $\mathcal{D}_n$; a single chain concentrates on a in-sample feasible region unrepresentative of the full posterior, yielding an overly optimistic out-of-sample feasibility estimate. Four independent chains attenuates this bias by covering posterior modes that single-chain exploration misses.

The right panel reports relative regret. BCP and IEO achieve comparable regret while ENW is roughly twice as costly, confirming that BCP retains the benefit of DFL.
At $n=100$, BCP shows marginally higher median regret than IEO but substantially higher feasibility; this trade-off resolves at $n \geq 200$, where BCP attains both lower regret and higher feasibility. 
We attribute this to IEO's reliance on a surrogate loss to approximate decision quality, whereas BCP optimizes the actual downstream decision loss during the random walk.

This experiment demonstrates two extensions of BCO beyond the differentiable settings of Sections~\ref{sec:6_1}--\ref{sec:6_2}. First, paired with the adaptive MH sampler, the framework handles nondifferentiable problems where gradient-based methods are inapplicable. Second, the aggregation mechanism carries over to problems with uncertainty in the constraints, where propagating posterior mass enables compatibility with risk-averse formulations such as CVaR constraints. We note that the out-of-sample guarantees of Section~\ref{sec:42} cover uncertainty in the objective; the chance-constrained setting is treated empirically here, and a formal analysis remains an open direction.

\section{Concluding Remarks}\label{sec:7}
We proposed BCO, a framework unifying Bayesian statistics, machine learning, and OR/MS for data-driven CSO. 
By replacing a statistical likelihood with an operational one induced by downstream decision loss, BCO yields a decision-focused Gibbs posterior that accounts for model uncertainty and is robust to misspecification. The resulting Bayesian contextual policy aggregates candidate models through the posterior predictive distribution, propagating model uncertainty into the decision objective. We established exponential posterior concentration and nonasymptotic out-of-sample guarantees that characterize the roles of misspecification, model uncertainty, and posterior concentration. We further developed practical inference via variational inference and adaptive Metropolis-Hastings. Experiments on two-stage shipment, newsvendor, and return-constrained portfolio problems confirm that BCO improves out-of-sample performance and stability against model uncertainty and sampling variability, especially in small- and moderate-sample regimes.

\clearpage
\putbib[ref]
\end{bibunit}

\clearpage
\appendix
\setcounter{section}{0}
\renewcommand{\thesection}{EC.\arabic{section}}
\renewcommand{\thesubsection}{\thesection.\arabic{subsection}}
\renewcommand{\thesubsubsection}{\thesubsection.\arabic{subsubsection}}
\numberwithin{equation}{section}
\setcounter{lemma}{0}
\renewcommand{\thelemma}{EC.\arabic{lemma}}
\setcounter{figure}{0}
\renewcommand{\thefigure}{EC.\arabic{figure}}
\setcounter{table}{0}
\renewcommand{\thetable}{EC.\arabic{table}}
\setcounter{algorithm}{0}
\renewcommand{\thealgorithm}{EC.\arabic{algorithm}}

\begin{bibunit}[plainnat]
\ECHead{\centering E-Companion to \\ From Frequentist to Bayesian Contextual Optimization}
\section{Omitted Proofs}

\subsection{Supplementary Results}

\subsubsection{PAC-Bayes and Generalized Bayesian Updating}
\begin{lemma}[Gibbs Posterior Optimality,~\textnormal{\citet{EC-Bissiri-2016-JRSSB}}]\label{lem:1} 
Let $(\Theta, \mathcal{B})$ be a measurable space, let prior $\rho_0$ be a probability measure on $(\Theta, \mathcal{B})$, and let $L:\Theta \rightarrow \mathbb{R}\cup\{ + \infty\}$ be $\mathcal{B}$-measurable. $\forall \lambda >0$, assume the normalizing constant
    $
        I_0 \coloneqq \int_\Theta e^{-\lambda L(\theta)}\rho_0(\mathrm{d}\theta)
    $
    satisfies  $I_0 < \infty$ and assume 
    $
        \int_\Theta \abs{L(\theta)}\rho(\mathrm{d}\theta) < \infty
    $
    for all probability measures $\rho$ on $(\Theta, \mathcal{B})$. Define the Gibbs measure $\rho^*(\mathrm{d}\theta)$ given any prior $\rho_0$ by:
    \begin{equation*}
        \rho^*(\mathrm{d}\theta) \coloneqq \frac{ e^{-\lambda L(\theta)} }{ I_0 } \rho_0(\mathrm{d}\theta).
    \end{equation*}
    Consider a functional $\mathcal{J}$ of probability measures $\rho$ on $(\Theta, \mathcal{B})$:
    \begin{equation*}
    \mathcal{J}(\rho) \coloneqq \begin{cases}
        \E_{\rho}[L(\theta)] + \frac{1}{\lambda }\KL{\rho}{\rho_0}, & \text{if } \rho \ll \rho_0,\\ 
        +\infty, & \text{otherwise},
    \end{cases}
    \end{equation*}
    where $\rho \ll \rho_0$ means $\rho$ is absolutely continuous with respect to $\rho_0$. 
    Then the Gibbs posterior $\rho^*$ is the unique minimizer of the functional $\mathcal{J}$, i.e. $\rho^* = \argmin_{\rho \in \mathcal{P}(\Theta)} \mathcal{J}(\rho)$.
\end{lemma}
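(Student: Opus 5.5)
The plan is the Donsker--Varadhan (Gibbs) variational formula. First I will show that $\mathcal J(\rho)$ equals $\frac{1}{\lambda}\KL{\rho}{\rho^*}$ plus a constant that does not depend on $\rho$. Nonnegativity of KL divergence, with equality only when $\rho=\rho^*$, then gives both the minimality and the uniqueness of $\rho^*$.

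\textbf{Step 1 (reduction).} If $\rho \not\ll \rho_0$, then $\mathcal J(\rho)=+\infty$ by definition. These measures are never minimizers, because $\mathcal J(\rho^*)$ will turn out to be finite. Since $I_0<\infty$ and $e^{-\lambda L}>0$ wherever $L<\infty$, the density $d\rho^*/d\rho_0 = e^{-\lambda L}/I_0$ is well defined. I also need $\rho^*$ and $\rho_0$ to be mutually absolutely continuous, i.e.\ $L<\infty$ $\rho_0$-a.e. This follows from the standing integrability hypothesis: applied to $\rho=\rho_0$, it forces $L$ to be finite $\rho_0$-a.e. (I also need $I_0>0$, which holds for the same reason.) Consequently $\rho\ll\rho_0$ if and only if $\rho\ll\rho^*$.

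\textbf{Step 2 (identity).} Fix $\rho\ll\rho_0$. By the chain rule for Radon--Nikodym derivatives,
\[
\log\frac{d\rho}{d\rho_0} \;=\; \log\frac{d\rho}{d\rho^*} + \log\frac{d\rho^*}{d\rho_0} \;=\; \log\frac{d\rho}{d\rho^*} - \lambda L - \log I_0 .
\]
Integrating against $\rho$ gives
\[
\KL{\rho}{\rho_0} = \KL{\rho}{\rho^*} - \lambda \E_\rho[L] - \log I_0 ,
\]
and therefore
\[
\mathcal J(\rho) = \frac{1}{\lambda}\KL{\rho}{\rho^*} - \frac{1}{\lambda}\log I_0 .
\]

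\textbf{Step 3 (conclusion).} By Gibbs' inequality, $\KL{\rho}{\rho^*}\ge 0$, with equality if and only if $\rho=\rho^*$. Hence $\mathcal J$ attains its minimum value $-\lambda^{-1}\log I_0$ exactly at $\rho^*$, which proves both existence and uniqueness of the minimizer.

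\textbf{Main obstacle.} The delicate point is justifying the integration in Step 2 without producing an undefined $\infty-\infty$. The hypothesis $\int|L|\,d\rho<\infty$ makes $\E_\rho[L]$ finite, so splitting the integral of the sum is legitimate. Each KL term is well defined in $[0,\infty]$, because the negative part of $\log(d\rho/d\mu)$ is always $\rho$-integrable. The identity between the two KL divergences then holds in the extended reals: either both sides are finite or both are $+\infty$. I will handle the case $\KL{\rho}{\rho_0}=\infty$ separately, noting that in that case $\KL{\rho}{\rho^*}=\infty$ as well, so the identity still holds and such $\rho$ cannot be minimizers.
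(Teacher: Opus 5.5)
Your proposal is correct and takes the same route as the argument of Bissiri et al.\ (2016), which the paper cites instead of reproducing: rewrite $\mathcal{J}(\rho)$ as $\frac{1}{\lambda}\KL{\rho}{\rho^*} - \frac{1}{\lambda}\log I_0$, then conclude from nonnegativity of the KL divergence, with equality only at $\rho=\rho^*$. Your additional care supplies measure-theoretic details that the original derivation leaves implicit: you show $L<\infty$ $\rho_0$-a.e., which gives $I_0>0$ and mutual absolute continuity, and you avoid $\infty-\infty$ by using the $\rho$-integrability of $L$ and of the negative part of the log-density.
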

\begin{proof}{Proof of Lemma~\ref{lem:1}}
    The proof is given in~\citet[page 8]{EC-Bissiri-2016-JRSSB}. \hfill\Halmos
\end{proof}
\begin{lemma}[PAC-Bayes Bound,~\textnormal{\citet{EC-Alquier-2024-BOOK}}]\label{lem:2}
    Denote $S = (X,Y), s = (x,y)$. Given a loss function $\ell(\theta, S): \theta \mapsto \R_+$, denote its risk and empirical counterpart in $\theta$ as:
    \begin{equation*}
        L(\theta) = \mathbb{E}_{\P}[\ell(\theta,S)],\quad  L_n(\theta) = \frac{1}{n}\sum_{i=1}^n \ell(\theta, s_i).
    \end{equation*}
    Assume $S_1,\dots, S_n \stackrel{i.i.d.}{\sim} \P$ and assume $\ell$ is a bounded random variable such that $\ell \in [0, 1]$.
    Let $\rho_0 \in\mathcal{P}(\Theta)$ be a prior independent of $\mathcal{D}_n$. Then for any $\lambda > 0$, and any $\eta \in (0,1)$, the following inequality holds with probability at least $1 - \eta$ over the draw $\mathcal{D}_n$ simultaneously for all $\rho \ll \rho_0$:
    \begin{equation*}
        \mathbb{E}_\rho[L(\theta)] \leq \E_\rho[L_n(\theta)] + \frac{ \KL{\rho}{\rho_0} + \log ( 1 / \eta)  }{\lambda n }  + \frac{\lambda }{8}. \tag{PAC-Bayes Bound}
    \end{equation*}
    Fix $\lambda, \eta,$ and $\mathcal{D}_n$. Minimizing the right-hand side of the inequality in $\rho$ is equivalent to minimizing 
    $
        \E_\rho[L_n(\theta)] + \frac{1}{\lambda n} \KL{\rho}{\rho_0}.
    $
    Therefore, the Gibbs posterior $\rho^*(\mathrm{d}\theta) = \exp\lra{ - \lambda n L_n(\theta)}\rho_0(\mathrm{d}\theta)$ minimizes the PAC-Bayes bound as shown in Lemma~\ref{lem:1}.
\end{lemma}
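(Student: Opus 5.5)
The plan is the standard Catoni-type argument: an exponential-moment bound for a fixed parameter, integrated against the prior, then lifted to all posteriors simultaneously by the Donsker--Varadhan variational formula. Throughout, write $t \coloneqq \lambda n$.

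\emph{Step 1: fixed $\theta$.} Fix $\theta\in\Theta$. The variables $\ell(\theta,S_i)\in[0,1]$ are i.i.d. with mean $L(\theta)$. Hoeffding's lemma applied to each centered summand $\frac{t}{n}\bigl(L(\theta)-\ell(\theta,S_i)\bigr)$, whose range has length $t/n$, gives
\begin{equation*}
\E_{\mathcal D_n}\bigl[e^{t(L(\theta)-L_n(\theta))}\bigr]\le \exp\lra{\frac{t^2}{8n}}=\exp\lra{\frac{\lambda^2 n}{8}}.
\end{equation*}

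\emph{Step 2: average over the prior and apply Markov.} Because $\rho_0$ does not depend on $\mathcal D_n$, Tonelli's theorem lets us swap the two integrals, so $\E_{\mathcal D_n}\E_{\rho_0}[e^{t(L-L_n)}]\le e^{\lambda^2 n/8}$. Markov's inequality then gives, with probability at least $1-\eta$,
\begin{equation*}
\E_{\rho_0}\bigl[e^{t(L-L_n)}\bigr]\le \frac{e^{\lambda^2 n/8}}{\eta}.
\end{equation*}

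\emph{Step 3: uniformity over posteriors.} On this single event, apply the Donsker--Varadhan formula to the measurable function $h=t(L-L_n)$, which is bounded because $\ell\in[0,1]$. For every $\rho\ll\rho_0$ it gives
\begin{equation*}
\E_\rho[h]\le \KL{\rho}{\rho_0}+\log\E_{\rho_0}[e^{h}]\le \KL{\rho}{\rho_0}+\frac{\lambda^2 n}{8}+\log\frac1\eta .
\end{equation*}
Dividing by $t=\lambda n$ yields the stated bound. It holds simultaneously for all $\rho\ll\rho_0$, because the probabilistic event in Step 2 involves only $\rho_0$.

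\emph{Step 4: the minimizer.} For fixed $\lambda$, $\eta$ and $\mathcal D_n$, the terms $\log(1/\eta)/(\lambda n)$ and $\lambda/8$ do not depend on $\rho$. Minimizing the right-hand side over $\rho$ is therefore the same as minimizing $\E_\rho[L_n]+\frac{1}{\lambda n}\KL{\rho}{\rho_0}$. Multiplying by $n$, this is exactly $\mathcal J$ in Lemma~\ref{lem:1} with loss $nL_n$ and inverse temperature $\lambda$. Lemma~\ref{lem:1} applies because $nL_n\in[0,n]$ gives a finite normalizing constant and integrability under every $\rho$. Its conclusion is that the unique minimizer is the Gibbs measure $\propto e^{-\lambda nL_n}\rho_0$.

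\emph{Main obstacle.} The only delicate points are the Hoeffding scaling, which must produce $\lambda/8$ rather than $\lambda/(8n)$ given the $\lambda n$ convention, and the requirement that the high-probability event be defined through $\rho_0$ alone, so that the bound is uniform in $\rho$.
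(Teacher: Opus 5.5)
Your proposal is correct and is essentially the argument behind the paper's proof, which simply defers to Alquier's Theorem 2.1. That argument uses Hoeffding's lemma for fixed $\theta$, Fubini over the data-independent prior, the Donsker--Varadhan formula, and a Markov/Chernoff step; Alquier applies Donsker--Varadhan inside the expectation before Markov while you apply Markov first, which is immaterial, and Alquier's $\lambda$ is your $t=\lambda n$, so dividing $\lambda^2 n/8$ by $\lambda n$ gives $\lambda/8$ as you found. Your Step 4 likewise matches the paper's appeal to Lemma~\ref{lem:1}.
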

\begin{proof}{Proof of Lemma~\ref{lem:2}}
    The proof is given in~\citet[Theorem 2.1]{EC-Alquier-2024-BOOK}. \hfill\Halmos
\end{proof}
\subsubsection{Supplementary Lemmas}
\begin{lemma}\label{lem:3}
Suppose Assumption~\ref{A6} holds. For any $P, Q \in \mathcal{P}(\mathcal{Y})$,
\begin{enumerate}[label=(\alph*)]
    \item for every $z \in \mathcal{Z}$,
    \begin{equation*}
        \abs{V(z,P) - V(z,Q)} \leq L_y \mathcal{W}_1(P,Q);
    \end{equation*}
    \item for every $z \in \mathcal{Z}$, if $z_Q \in \argmin_{z \in \mathcal{Z}} V(z,Q)$, then
    \begin{equation*}
        V(z_Q,P) \leq V(z, P) + 2 L_y \mathcal{W}_1(P,Q);
    \end{equation*}
    \item if also $z_P \in \argmin_{z \in \mathcal{Z}} V(z,P)$, then
    \begin{equation*}
        0 \leq V(z_Q,P) - V(z_P, P) \leq 2 L_y \mathcal{W}_1(P,Q).
    \end{equation*}
\end{enumerate}
\end{lemma}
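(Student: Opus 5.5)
The three parts follow from the Kantorovich--Rubinstein duality combined with the uniform Lipschitz property in Assumption~\ref{A6}. I prove them in order, each building on the previous one.

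\emph{Part (a).} Fix $z\in\mathcal Z$. By Assumption~\ref{A6}, the map $y\mapsto c(z;y)$ is $L_y$-Lipschitz on $\mathcal Y$, and this holds uniformly in $z$. Since $P$ and $Q$ have finite first moments, $c(z;\cdot)$ is integrable under both. Kantorovich--Rubinstein duality states
\[
\mathcal W_1(P,Q)=\sup_{\mathrm{Lip}(g)\le 1}\left|\int g\,\mathrm dP-\int g\,\mathrm dQ\right|.
\]
Applying it to $g=c(z;\cdot)/L_y$ (when $L_y=0$ the claim is trivial) gives
\[
|V(z,P)-V(z,Q)|\le L_y\mathcal W_1(P,Q).
\]
If one prefers to avoid duality, the same bound follows from any coupling $\gamma$ of $(P,Q)$:
\[
|\mathbb E_\gamma[c(z;Y)-c(z;Y')]|\le L_y\,\mathbb E_\gamma\|Y-Y'\|,
\]
and taking the infimum over $\gamma$ yields the result.

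\emph{Part (b).} Let $z_Q$ be a minimizer of $V(\cdot,Q)$ and fix any $z\in\mathcal Z$. The plan is the standard three-term chain:
\[
V(z_Q,P)\le V(z_Q,Q)+L_y\mathcal W_1\le V(z,Q)+L_y\mathcal W_1\le V(z,P)+2L_y\mathcal W_1 .
\]
The first and third inequalities are part (a), applied at $z_Q$ and at $z$ respectively. The middle inequality is the optimality of $z_Q$ for $V(\cdot,Q)$.

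\emph{Part (c).} The lower bound $0\le V(z_Q,P)-V(z_P,P)$ holds because $z_P$ minimizes $V(\cdot,P)$ and $z_Q\in\mathcal Z$. The upper bound is part (b) applied with $z=z_P$.

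The only real obstacle is technical: ensuring every expectation is finite, so that differences such as $V(z,P)-V(z,Q)$ are well defined. This is handled by the standing finite-first-moment assumption together with the Lipschitz bound
\[
|c(z;y)|\le|c(z;y_0)|+L_y\|y-y_0\|
\]
for a fixed reference point $y_0$. No attainment issue arises for minimizers, because (b) and (c) assume the minimizers exist.
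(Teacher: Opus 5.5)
Your proof is correct and follows the paper's argument: part (a) applies Kantorovich--Rubinstein duality to the $L_y$-Lipschitz map $y \mapsto c(z;y)$, part (b) is the same three-step chain (part (a) twice plus the optimality of $z_Q$), and part (c) follows from (b) with $z = z_P$ and the optimality of $z_P$. Your coupling variant of (a) is a good addition, since only the easy direction of the duality is needed: any coupling bounds the integral difference, and taking the infimum gives $L_y\mathcal{W}_1(P,Q)$.
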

\proof{Proof of Lemma~\ref{lem:3}}
By Assumption~\ref{A6}, for each fixed $z$, the map $y \mapsto c(z;y)$ is $L_y$-Lipschitz. The Kantorovich-Rubinstein duality for $\mathcal{W}_1$ gives Lemma~\ref{lem:3}(a); see~\citet[Theorem 5.10]{OptimalTransport}.

For Lemma~\ref{lem:3}(b), use Lemma~\ref{lem:3}(a) twice and the optimality of $z_Q$ under $Q$:
\begin{equation*}
    \begin{aligned}
        V(z_Q, P) &\leq V(z_Q, Q) + L_y \mathcal{W}_1(P,Q) \\
        &\leq V(z, Q) + L_y \mathcal{W}_1(P,Q) \\
        &\leq V(z, P) + 2 L_y \mathcal{W}_1(P,Q).
    \end{aligned}
\end{equation*}
Lemma~\ref{lem:3}(c) follows from Lemma~\ref{lem:3}(b) with $z=z_P$, together with the optimality inequality $V(z_Q,P) \geq V(z_P, P)$.
\hfill\Halmos
\endproof
\subsection{Omitted Proofs in Section~\ref{sec:41}}

\subsubsection{Proof of Theorem~\ref{thm:1}}
\proof{Proof of Theorem~\ref{thm:1}}
    First, denote $ Z_n \coloneqq \sup_{\theta \in \Theta} \abs{R_n(\theta) - R(\theta)}$. Let $\P_n$ be the probability law generating $\mathcal{D}_n$, we first claim there exist universal constants $C_0, C_1 > 0$ such that if the sample size condition holds, then
    \begin{equation}
        \P_n\lra{Z_n \leq \varepsilon / 6} \geq 1 - \eta. \label{Thm1_1}
    \end{equation}
    
    By Assumption~\ref{A1} the loss class $\mathcal{F}_\Theta$ is uniformly bounded by $1$ and admits the constant envelope $F\equiv 1$. By Assumption~\ref{A3}, for every probability measure $\Q$ on $\mathcal X \times \mathcal Y$ and every $r \in (0, 1]$, 
    \begin{equation*}
        \log \mathcal{N}\lra{r, \mathcal{F}_\Theta, L_2(\mathbb{Q})} \leq d_\mathcal{F} \log (A_\mathcal{F} / r).
    \end{equation*}
    Hence the uniform entropy integral satisfies
    \begin{align}
        J(1,\mathcal{F}_\Theta) 
        &\coloneqq \sup_{\Q} \int_0^1 
        \sqrt{
        1 + \log \mathcal{N}\lra{u, \mathcal{F}_\Theta, L_2(\mathbb{Q})}
        }\,\mathrm{d}u \notag\\
        &\leq \int_0^1 \sqrt{1 + d_\mathcal{F} \log(A_\mathcal{F} / u)} \, \mathrm{d}u 
        \leq C_a \sqrt{d_\mathcal{F} \log (C_b A_\mathcal{F})} \label{Thm1_2},
    \end{align}
    for universal constants $C_a, C_b >0$ by using $d_\mathcal{F} \geq 1, \, A_\mathcal{F} \geq e$, and the change of variables $t=\sqrt{\log (A_\mathcal{F}/ u)}$. Applying the maximal inequality for VC-type classes with envelope 1~\citep[Theorem 2.14.1]{vandervaart-1996-book} to the centered process $\theta \mapsto R_n(\theta) - R(\theta)$ yields a universal constant $C_m$ with 
    \begin{equation}
        \E_{\P_n}[Z_n] \leq \frac{C_m J(1,\mathcal{F}_\Theta)}{\sqrt{n}} \leq 
        \frac{C_m C_a \sqrt{d_\mathcal{F} \log(C_b A_\mathcal{F})}}{\sqrt{n}}. \label{Thm1_3}
    \end{equation}

    Because the loss is bounded in $[0,1]$, replacing a single observation $(x_i,y_i)$ alters $Z_n$ by at most $1/ n$. McDiarmid’s bounded-differences inequality~\citep[Theorem 6.2]{Boucheron-2013-book} therefore yields
    \begin{equation}
        Z_n \leq \frac{C_m C_a \sqrt{d_\mathcal{F} \log(C_b A_\mathcal{F})}}{\sqrt{n}} + 
        \sqrt{\frac{\log(1/\eta)}{2n}}.
    \end{equation}
    A sufficient condition for the right-hand side to be at most $\varepsilon/6$ is:
    \begin{equation*}
        n \geq \frac{C_0}{\varepsilon^2}\lra{
        d_\mathcal{F} \log(C_1 A_\mathcal{F}) + \log (1 / \eta)
        },
    \end{equation*}
    for suitable universal constants $C_0 = 36\lra{C_a C_m + 1 / \sqrt{2}}^2$ and $C_1 = C_b$. Denote the event $E_n = \{Z_n \leq \varepsilon / 6\}$. The remainder of the argument is deterministic on $E_n$. Next, we bound the numerator and denominator of the Gibbs posterior.

    On the event $E_n$, every $\theta \in B_\varepsilon$ satisfies
    \begin{equation*}
        R_n(\theta)\geq R(\theta) - \frac{\varepsilon}{6} \geq R^\star + \varepsilon - \frac{\varepsilon}{6} = R^\star + \frac{5\varepsilon}{6}.
    \end{equation*}
    Since $\rho_0$ is a probability measure with $\rho_0(B_\varepsilon) \leq 1$,
    \begin{equation}
        \int_{B_\varepsilon} e^{-\lambda n R_n(\theta)} \rho_0(\mathrm{d}\theta) \leq 
        \exp\lra{-\lambda n (R^\star + 5\varepsilon /6)}.\label{Thm1_4}
    \end{equation}

    Consider the ball $\mathbb{B}_{r_\varepsilon}(\theta^\star)$. Because $r_\varepsilon \leq r_0 /2 < r_0 = \mathrm{dist}(\theta^\star, \partial \Theta)$ by Assumption~\ref{A2}(i), the ball is contained in the interior of $\Theta: \mathbb{B}_{r_\varepsilon}(\theta^\star) \subset \mathrm{int}(\Theta)$. Because $r_\varepsilon \leq s_\varepsilon \leq \bar{r}$, Assumption~\ref{A4} applies on this ball, yielding for every $\theta \in \mathbb{B}_{r_\varepsilon}(\theta^\star)$:
    \begin{equation*}
        R(\theta) - R^\star \leq \psi\lra{\norm{\theta - \theta^\star}} \leq \psi(r_\varepsilon) \leq \psi(s_\varepsilon) \leq \varepsilon / 6.
    \end{equation*}
    Combined with $Z_n \leq \varepsilon /6$ on $E_n$,
    \begin{equation*}
        R_n(\theta) \leq R(\theta) + \varepsilon / 6 \leq R^\star + \varepsilon / 3, \quad \forall \theta \in \mathbb{B}_{r_\varepsilon}(\theta^\star).
    \end{equation*}
    Using the prior lower bound $\rho_0(\theta) \geq l_0$ by Assumption~\ref{A2}(ii) and the standard Lebesgue volume $\mathrm{Vol}(\mathbb{B}_{r_\varepsilon}(\theta^\star)) = \mathbb{V}_d r_\varepsilon^d$,
    \begin{equation}
        \int_\Theta e^{-\lambda n R_n(\theta)} \rho_0(\mathrm{d}\theta) \geq 
        \int_{\mathbb{B}_{r_\varepsilon}(\theta^\star)} e^{-\lambda n R_n(\theta)} \rho_0(\mathrm{d}\theta) \geq 
        \exp\lra{
        -\lambda n (R^\star + \varepsilon/ 3) 
        }l_0\mathbb{V}_dr_\varepsilon^d \label{Thm1_5}.
    \end{equation}
    Combining~\eqref{Thm1_4} and~\eqref{Thm1_5} on event $E_n$ completes the proof. \hfill\Halmos
\endproof
\subsubsection{Proof of Corollary~\ref{cor:1}}
\proof{Proof of Corollary~\ref{cor:1}}
The proof consists of two parts by verifying Assumptions~\ref{A3} and~\ref{A4} separately.

We start with Assumption~\ref{A4}. Fix $\theta^\star \in \Theta^\star \cap \mathrm{int}(\Theta)$. For each $x \in \mathcal{X}$, write $z^\star(x) = \pi_{\theta^\star}(x)$ and recall the margin from Condition~\ref{cond:1}(iii):
\begin{equation*}
    \Delta^\star(x) = \min_{ z \in \mathcal{V} \setminus \{z^\star(x)\}} \lrb{ 
    c(z;f_{\theta^\star}(x)) - c(z^\star(x);f_{\theta^\star}(x))
    } \geq 0.
\end{equation*}
By Assumption~\ref{A5}(i), $f_\theta$ is $L_f$-Lipschitz in $\theta$ uniformly in $x$, so $\norm{f_\theta(x) - f_{\theta^\star}(x)} \leq L_f \norm{\theta - \theta^\star}$. By Assumption~\ref{A6}, $c(z;\cdot)$ is $L_y$-Lipschitz uniformly in $z \in \mathcal{V}$. Hence for every $z \in \mathcal{V}$,
\begin{equation}
    \abs{
    c(z;f_\theta(x)) - c(z;f_{\theta^\star}(x))
    } \leq L_y L_f \norm{\theta - \theta^\star}. \label{cor1:1}
\end{equation}
Applying~\eqref{cor1:1} to $z = z^\star(x)$ and to any $z \in \mathcal{V} \setminus \lrc{z^\star(x)}$:
\begin{align*}
    &c(z;f_\theta(x)) - c(z^\star(x);f_{\theta}(x)) \\
    &\quad \geq \lrb{c(z;f_{\theta^\star}(x)) - c(z^\star(x);f_{\theta^\star}(x))}
    - 2 L_y L_f \norm{\theta - \theta^\star} \\
    &\quad \geq \Delta^\star(x) - 2L_yL_f\norm{\theta - \theta^\star}.
\end{align*}
Therefore, if $\Delta^\star(x) > 2 L_y L_f \norm{\theta -\theta^\star}$, then $z^\star(x)$ strictly minimizes $c(\cdot;f_\theta(x))$ over $\mathcal{V}$, and Condition~\ref{cond:1}(i) gives $\pi_\theta(x) = z^\star(x) = \pi_{\theta^\star}(x)$. Contrapositively,
\begin{equation}
    \lrc{\pi_\theta(X) \neq \pi_{\theta^\star}(X)} \subseteq \lrc{\Delta^\star(X) \leq 2 L_y L_f \norm{\theta - \theta^\star}}. \label{cor1:2}
\end{equation}
By Assumption~\ref{A1}, $c\in[0,1]$, so $\abs{ c(\pi_\theta(X);Y) - c(\pi_{\theta^\star}(X);Y) } \leq 1$ and the difference vanishes when $\pi_\theta(X) = \pi_{\theta^\star}(X)$. Hence
\begin{equation}
    R(\theta) - R^\star \leq \P_X\lra{ \pi_\theta(X) \neq \pi_{\theta^\star}(X) }.
\end{equation}
Combining~\eqref{cor1:2} with Condition~\ref{cond:1}(iii), using $\P_{X}\lra{\Delta^\star(X) = 0}=0$, we obtain
\begin{equation*}
    R(\theta) - R^\star \leq C_{\Delta}\lra{2 L_y L_f \norm{\theta - \theta^\star}}^\beta.
\end{equation*}
This verifies Assumption~\ref{A4} with $\psi(r) = C_{\Delta}\lra{2 L_y L_f r}^\beta$ and $\bar{r} = D$. The function $\psi$ is non-decreasing and $\psi(r) \downarrow 0$ as $r \downarrow 0 $.

Next, we verify Assumption~\ref{A3}. By Condition~\ref{cond:1}(ii), for every pair $j \neq k$, the region
\begin{equation*}
    \lrc{y \in \mathcal{Y}: c(z_k;y) \leq c(z_j;y)}
\end{equation*}
is a Boolean formula in at most $M$ affine inequalities in $y$. Let $\lrc{\lra{a_t,b_t}}_{t=1}^{T}$ enumerate the distinct affine functionals appearing across all $\binom{K}{2}$ unordered pairs, so $T \leq \binom{K}{2}M\leq K^2M /2$. Define
\begin{equation*}
    h_t(x;\theta) \coloneqq \mathbb{I}\lrc{a_t^\top f_\theta(x) + b_t \leq 0}, \quad 
    \sigma_\theta(x) \coloneqq \lra{ h_1(x;\theta), \dots, h_T(x;\theta) } \in \lrc{0,1}^T.
\end{equation*}
By Condition~\ref{cond:1}(i)--(ii), $\zeta_\theta(x)$ is a fixed deterministic function of $\sigma_\theta(x)$. In particular, $c(\pi_\theta(x);y)$ depends on $(\theta,x)$ only through $\sigma_\theta(x)$.

Fix $t\in\lrc{1,\dots, T}$. By Assumption~\ref{A5}(ii), the scalar class $\mathcal{S}$ has pseudo-dimension at most $d_\mathcal{S}$. Since the pseudo-dimension of a real-valued class upper-bounds the Vapnik–Chervonenkis (VC) dimension of any of its threshold subclasses~\citep[Theorem 11.3]{EC-anthony-1999-book}, the class
\begin{equation*}
    \mathcal{H}_t \coloneqq \lrc{x \mapsto h_t(x;\theta): \theta \in \Theta}
\end{equation*}
of $\lrc{0,1}$-valued functions satisfies $\mathrm{VCdim}(\mathcal{H}_t) \leq d_\mathcal{S}$. Because $h_t \in \lrc{0,1}$, for any probability measure $\Q_X$ on $\mathcal{X}$, 
\begin{equation*}
    \Q_X\lra{h_t(\cdot;\theta_1) \neq h_t(\cdot;\theta_2)} = \norm{ h_t(\cdot;\theta_1) - h_t(\cdot;\theta_2) }_{L_2(\Q_X)}^{2}.
\end{equation*}
Since $\sigma_{\theta_1}(x) = \sigma_{\theta_2}(x) \Rightarrow \zeta_{\theta_1}(x) = \zeta_{\theta_2}(x)$, so a union bound over the $T$ coordinates yields, 
\begin{equation}
    \Q_X(\zeta_{\theta_1} \neq \zeta_{\theta_2}) \leq \sum_{t=1}^{T} \norm{ h_t(\cdot;\theta_1) - h_t(\cdot;\theta_2) }_{L_2(\Q_X)}^{2}. \label{cor1:3}
\end{equation}
For any probability measure $\Q$ on $\mathcal{X} \times \mathcal{Y}$ with marginal $\Q_X$, the cost values at $\theta_1, \theta_2$ coincide on the event $\lrc{\zeta_{\theta_1}(X) = \zeta_{\theta_2}(X)}$, and differ by at most 1 off this event by boundedness in Assumption~\ref{A1}. Hence
\begin{equation}
    \norm{
    c\lra{\pi_{\theta_1}(\cdot);\cdot} - c\lra{\pi_{\theta_2}(\cdot);\cdot}
    }_{L_2(\Q)}^2  \leq \Q_X\lra{\zeta_{\theta_1} \neq \zeta_{\theta_2}}
    \leq \sum_{t=1}^{T} \norm{ h_t(\cdot;\theta_1) - h_t(\cdot;\theta_2) }_{L_2(\Q_X)}^{2}.\label{cor1:4}
\end{equation}
For each $t$, fix an $r/\sqrt{T}$-net $\mathcal{N}_t \subset L_2(\Q_X)$ of $\mathcal{H}_t$ of minimum size $N_t \coloneqq \mathcal{N}(r/\sqrt{T}, \mathcal{H}_t, L_2(\Q_X))$. For every $\theta \in \Theta$, pick $g_t(\theta) \in \mathcal{N}_t$ with $\norm{h_t(\cdot;\theta) - g_t(\theta)}_{L_2(\Q_X)}\leq r / \sqrt{T}$. Because the elements of $\mathcal{N}_t \subset \mathcal{H}_t$ are $\lrc{0,1}$-valued, the tuple $\lra{g_1(\theta),\dots, g_T(\theta)} \in \lrc{0,1}^T$ is a valid input to the deterministic rule of choosing decisions; let $\hat{\zeta}_\theta(x) \in \lrc{1,\dots,K}$ be the rule applied to the tuple, and define
\begin{equation*}
    \hat{f}_\theta(x,y) \coloneqq c(z_{\hat{\zeta}_{\theta}(x)}; y).
\end{equation*}
The bound in~\eqref{cor1:4} applied with $\lra{h_t(\cdot;\theta), g_t(\theta)}$ in place of $\lra{h_t(\cdot;\theta_1), h_t(\cdot;\theta_2)}$ gives
\begin{equation*}
    \norm{c\lra{\pi_\theta(\cdot);\cdot} - \hat{f}_\theta}_{L_2(\Q)}^{2} \leq \sum_{t=1}^{T}\norm{h_t(\cdot;\theta) - g_t(\theta)}_{L_2(\Q_X)}^{2} \leq T \cdot (r/\sqrt{T})^2 = r^2.
\end{equation*}
The set $\lrc{\hat f_\theta: \theta \in \Theta}$ has cardinality at most $\prod_{t=1}^{T} N_t$, so
\begin{equation*}
    \log \mathcal{N}(r,\mathcal{F}_\Theta, L_2(\Q)) \leq \sum_{t=1}^{T} \log \mathcal{N}(r/\sqrt{T},\mathcal{H}_t, L_2(\Q_X)). \label{cor1:5}
\end{equation*}

By Haussler’s covering bound for $\lrc{0,1}$-valued VC classes~\citep[Theorem 2.6.7]{vandervaart-1996-book}, for any probability measure $\Q_X$, any VC class $\mathcal{H}$ with $\mathrm{VCdim}(\mathcal{H}) \leq C_V$, and any $\epsilon \in (0,1]$,
\begin{equation*}
    \log \mathcal{N}(\epsilon,\mathcal{H}, L_2(\Q_X)) \leq 2 C_V \log(2e /\epsilon).
\end{equation*}
Applying this with $C_V = d_\mathcal{S}$ and $\epsilon = r / \sqrt{T}$, then summing over $t$ via~\eqref{cor1:5},
\begin{equation*}
    \log \mathcal{N}(r,\mathcal{F}_\Theta, L_2(\Q)) \leq 2 T d_\mathcal{S} \log \frac{2e\sqrt{T}}{r}, \quad r \in (0,1].
\end{equation*}
Using $T \leq K^2 M / 2$ derives
\begin{equation*}
    2T d_\mathcal{S} \leq K^2Md_\mathcal{S}, \quad 2 e \sqrt{T} \leq \sqrt{2}eK \sqrt{M}.
\end{equation*}
Hence Assumption~\ref{A3} holds with
\begin{equation*}
    d_\mathcal{F} \leq K^2 M d_\mathcal{S}, \quad A_\mathcal{F} = \max\lrc{\sqrt{2}eK \sqrt{M}, e}.
\end{equation*}
Rename $\sqrt{2}e = C_{c1}$ completes the proof.\hfill\Halmos
\endproof
\subsubsection{Proof of Corollary~\ref{cor:2}}
\proof{Proof of Corollary~\ref{cor:2}}
The proof consists of two parts by verifying Assumptions~\ref{A3} and~\ref{A4} separately.

We start with Assumption~\ref{A4}. By the two Lipschitz conditions in Condition~\ref{cond:2},
\begin{equation*}
    \abs{ c\lra{\pi_{\theta_1}(x);y} - c\lra{\pi_{\theta_2}(x);y} } \leq L_z \norm{\pi_{\theta_1}(x) - \pi_{\theta_2}(x)} \leq L_z L_\theta \norm{\theta_1 - \theta_2}. \label{cor2:1}
\end{equation*}
Taking $\theta_2 = \theta^\star$ and integrating against $\P$,
\begin{equation*}
    R(\theta) - R^\star = \E_{\P}\lrb{
    c\lra{\pi_{\theta}(X);Y} - c\lra{\pi_{\theta^\star}(X);Y}
    } \leq L_z L_\theta \norm{\theta - \theta^\star}.
\end{equation*}
This verifies Assumption~\ref{A4} with $\psi(r) = L_z L_\theta r$ and $\bar{r} =D$, i.e., $\psi$ is globally Lipschitz.

Next, we verify Assumption~\ref{A3}.~\eqref{cor2:1} implies that for any probability measure $\Q$ on $\mathcal{X} \times \mathcal{Y}$, 
\begin{equation*}
    \norm{
    c\lra{ \pi_{\theta_1}(\cdot);\cdot } - c\lra{ \pi_{\theta_2}(\cdot);\cdot }
    }_{L_2(\Q)} \leq L_z L_\theta \norm{\theta_1 - \theta_2}.
\end{equation*}
Hence any $\epsilon$-net of $\lra{\Theta,  \norm{\cdot}}$ induces an $L_zL_\theta \epsilon$-net of $\mathcal{F}_\Theta$ in $L_2(\Q)$:
\begin{equation}
    \mathcal{N}(r, \mathcal{F}_\Theta, L_2(\Q)) \leq \mathcal{N} \lra{
    \frac{r}{L_z L_\theta}, \Theta, \norm{\cdot}
    }.\label{cor2:2}
\end{equation}
By Assumption~\ref{A2}(i), $\Theta$ is compact with diameter $D$. The standard volumetric covering bound~\citep[Lemma 2.5]{vandervaart-1996-book} gives, for every $\epsilon \in (0,D]$,
\begin{equation*}
    \mathcal{N}(\epsilon, \Theta, \norm{\cdot}) \leq \lra{\frac{3D}{\epsilon}}^d.
\end{equation*}
Substituting $\epsilon = r / (L_z L_\theta)$ into~\eqref{cor2:2}:
\begin{equation*}
    \log \mathcal{N}(r ,\mathcal{F}_\Theta, L_2(\Q)) \leq d \log \frac{3DL_zL_\theta}{r}, \quad r \in (0, L_z L_\theta D].
\end{equation*}
Set $A_\mathcal{F}= \max\lrc{3 D L_z L_\theta, e}$ to ensure $A_\mathcal{F} \geq e $ as required by Assumption~\ref{A3}. If $3 D L_z L_\theta \geq e $, then $A_\mathcal{F} = 3 D L_z L_\theta$ and the bound holds verbatim for all $ r \in (0, 1]$. If $3 D L_z L_\theta < e$, the $A_\mathcal{F} = e$ and $\log (A_\mathcal{F}/ r) = \log (e / r) \geq \log (3 D L_z L_\theta / r)$ for every $r > 0$, so the bound still holds. Hence Assumption~\ref{A3} holds with $d_\mathcal{F} = d$ and $A_\mathcal{F} = \max\{3 D L_z L_\theta ,e \}$, and Theorem~\ref{thm:1} applies. \hfill\Halmos
\endproof
\subsection{Omitted Proofs in Section~\ref{sec:42}}

\subsubsection{Proof of Proposition~\ref{prop:1}}
\proof{Proof of Proposition~\ref{prop:1}}
Fix $\theta^\star\in\Theta^\star$. Let
\begin{equation*}
    z^\circ_x \in \argmin_{z \in \mathcal{Z}} V(z, \P_{Y\mid X =x})
\end{equation*}
be an oracle decision under the true conditional distribution $P_x = \P_{Y \mid X = x}$, and set 
\begin{equation*}
    R^\circ \coloneqq \E_{\P_X}\lrb{V(z^\circ_X, \P_{Y \mid X})}.
\end{equation*}
Since $R^\star = R(\theta^\star) = \E_{\P_{X}}\lrb{ V(\pi_{\theta^\star}(X) , \P_{Y \mid X}) }$, we have $R^\circ \leq R^\star$.

By Lemma~\ref{lem:3}(c), applied pointwise with $P=P_x, \, Q = m(x;\rho)$, and $z_Q = \pi_\rho(x)$, 
\begin{equation*}
    V(\pi_\rho(x), P_x) - V(z^\circ_x, P_x) \leq 2 L_y \mathcal{W}_1(P_x, m(x;\rho)).
\end{equation*}
It remains to replace $m(x;\rho)$ by $m(x;\rho_G)$. If $\rho(B) > 0$, let $\rho_B = \rho(\cdot \mid B)$; otherwise the following is interpreted with the $B$-term equal to zero. Since
\begin{equation*}
    \rho = \rho(G)\rho_G + \rho(B) \rho_B,
\end{equation*}
Definition~\ref{def:1} gives
\begin{equation*}
    m(x;\rho) = \rho(G) m(x;\rho_G) + \rho(B) m(x;\rho_B).
\end{equation*}
Under Assumption~\ref{A5}(i), $m(x;\theta) = \delta(f_\theta(x))$ and $f_\theta$ is $L_f$-Lipschitz in $\theta$. Couple $m(x;\rho)$ and $m(x;\rho_G)$ as follows: with probability $\rho(G)$, use the same $\theta \in \rho_G$ in both distributions; with probability $\rho(B)$, use $\theta_B \sim \rho_B$ for $m(x;\rho)$ and an independent $\theta_G \sim \rho_G$ for $m(x;\rho_G)$. Since $\Theta$ has diameter $D$,
\begin{equation*}
    \mathcal{W}_1( m(x;\rho), m(x;\rho_G) ) \leq \rho(B) \E_{\rho_B,\rho_G}\lrb{\norm{f_{\theta_B}(x) - f_{\theta_G}(x)}} \leq \rho(B) L_f D.
\end{equation*}
Therefore,
\begin{equation*}
    \mathcal{R}(\rho) - R^\star \leq 2 L_y \E_{\P_X}\lrb{
    \mathcal{W}_1( P_x, m(x;\rho_G) ) + \mathcal{W}_1(m(x;\rho_G), m(x;\rho))
    }
    \leq \mathcal{E}(\rho_G) + 2 L_y L_f D \rho(B).
\end{equation*}
For every $x$, optimality of $z^\circ_x$ under $P_x$ gives $V(\pi_\rho(x), P_x) \geq V(z_x^\circ, P_x)$. Hence
\begin{equation*}
    V \lra{\pi_\rho(x), P_x} - V \lra{\pi_{\theta^\star}(x), P_x} \geq 
    V \lra{z_x^\circ, P_x} - V \lra{\pi_{\theta^\star}(x), P_x} \geq - 2 L_y \mathcal{W}_1\lra{P_x, m(x;\theta^\star)},
\end{equation*}
where the last inequality is Lemma~\ref{lem:3}(c) applied with $Q = m(x;\theta^\star)$, because $\pi_{\theta^\star}(x)$ minimizes $V(\cdot, m(x;\theta^\star))$.

By the triangle
inequality,
\begin{equation*}
    \mathcal{W}_1 (P_x, m(x;\theta^\star)) \leq 
    \mathcal{W}_1 (P_x, m(x;\rho_G)) +
    \mathcal{W}_1 (m(x;\rho_G), m(x;\theta^\star)) .
\end{equation*}
Using Assumption~\ref{A5}(i) again and coupling $\theta \sim \rho_G$ with the fixed point $\theta^\star$,
\begin{equation*}
    \mathcal{W}_1 (m(x;\rho_G) ,m(x;\theta^\star)) \leq \E_{\rho_G}\lrb{\norm{  f_\theta(x) - f_{\theta^\star}(x)  }} \leq L_f \E_{\rho_G}\lrb{\norm{\theta - \theta^\star}}.
\end{equation*}
Integrating over $\P_X$ yields
\begin{equation*}
    R(\rho) - R^\star \geq - \mathcal{E}(\rho_G) - \Delta(\rho_G).
\end{equation*}
If $\rho = \rho_n, \, B = B_\varepsilon, \, G = G_\varepsilon$, and $\rho_n(B_\varepsilon) \leq \delta_{n,\varepsilon}$, the preceding deterministic bound gives the claimed inequality. Using the upper bound in Theorem~\ref{thm:1} for $\rho_n(B_\varepsilon)$ completes the proof. \hfill\Halmos
\endproof
\subsubsection{Proof of Theorem~\ref{thm:2}}
\proof{Proof of Theorem~\ref{thm:2}}
For each $x$, denote $P_x = \P_{Y \mid X =x }$. Definition~\ref{def:2} gives
\begin{equation*}
    \pi_{\rho_n}(x) \in \argmin_{z \in \mathcal{Z}} V(z, m(x;\rho_n)).
\end{equation*}
Applying Lemma~\ref{lem:3}(b) with $P = P_x,\, Q = m(x;\rho_n),\, z_Q = \pi_{\rho_n}(x)$, and $z = \pi_\theta(x)$ yields, for every $\theta \in \Theta$,
\begin{equation*}
    V(\pi_{\rho_n}(x), P_x) \leq V(\pi_\theta(x), P_x) + 2 L_y \mathcal{W}_1(P_x, m(x;\rho_n)).
\end{equation*}
Integrating first with respect to $\rho_n(\mathrm{d}\theta)$ and then with respect to $\P_X$ gives 
\begin{equation}
    \mathcal{R}(\rho_n) \leq \E_{\rho_n}\lrb{R(\theta)} + \mathcal{E}(\rho_n). \label{thm2:1}
\end{equation}
Next, set 
\begin{equation*}
    \ell(\theta,(x,y)) \coloneqq c(\pi_\theta(x);y).
\end{equation*}
By Assumption~\ref{A1}, $\ell \in \lrb{0,1}$, its population risk is $R(\theta)$, and its empirical risk is $R_n(\theta)$ by definition. Applying Lemma~\ref{lem:2} with this loss, and then taking $\rho = \rho_n$, gives an event $\mathcal{A}$ with probability at least $1 -\eta$ on which
\begin{equation}
    \E_{\rho_n}\lrb{R(\theta)} \leq \E_{\rho_n}\lrb{R_n(\theta)} + 
    \frac
    {\KL{\rho_n}{\rho_0} + \log(1/\eta)}
    {\lambda n} + \frac{\lambda}{8}. \label{thm2:2}
\end{equation}
By Lemma~\ref{lem:1} applied to $L = R_n$ and inverse temperature $\lambda n$, the Gibbs posterior $\rho_n$ minimizes
\begin{equation*}
    \rho \mapsto \E_{\rho}\lrb{R_n(\theta)} + \frac{1}{\lambda n}\KL{\rho}{\rho_0}.
\end{equation*}
Since $\KL{\rho^\star}{\rho_0} < \infty$, on the same event $\mathcal{A}$,
\begin{equation}
    \E_{\rho_n}\lrb{R(\theta)} \leq \E_{\rho^\star}\lrb{R_n(\theta)} + 
    \frac
    {\KL{\rho^\star}{\rho_0} + \log(1/\eta)}
    {\lambda n} + \frac{\lambda}{8}. \label{thm2:3}
\end{equation}
It remains to replace $\E_{\rho^\star}\lrb{R_n(\theta)}$ by its population counterpart. Define
\begin{equation*}
    G(S_i) \coloneqq \E_{\rho^\star}\lrb{\ell(\theta, S_i)}, \quad S_i = (X_i, Y_i).
\end{equation*}
Again by Assumption~\ref{A1}, $0 \leq G(S_i) \leq 1$. Moreover,
\begin{equation*}
    \frac{1}{n}\sum_{i=1}^{n} G(S_i) = \E_{\rho^\star}\lrb{R_n(\theta)}, \quad 
    \E_{\P}\lrb{G(S_i)} = \E_{\rho^\star}\lrb{R(\theta)}.
\end{equation*}
Hoeffding’s inequality gives an event $\mathcal{B}$ with probability at least $1 - \eta$ on which
\begin{equation}
    \E_{\rho^\star}\lrb{R_n(\theta)} \leq \E_{\rho^\star}\lrb{R(\theta)} + \sqrt{\frac{\log(1/\eta)}{2n}}.\label{thm2:4}
\end{equation}
On $\mathcal{A} \cap \mathcal{B}$,  whose probability is at least $ 1 - 2 \eta$, combining~\eqref{thm2:1},~\eqref{thm2:3}, and~\eqref{thm2:4} yields
\begin{equation*}
\begin{aligned}
    \mathcal{R}(\rho_n) - \mathcal{R}^\star 
    &\leq \mathcal{E}(\rho_n) + \E_{\rho^\star}\lrb{R(\theta)} - \mathcal{R}^\star 
    +  \frac {\KL{\rho^\star}{\rho_0} + \log(1/\eta)}{\lambda n} + \frac{\lambda}{8} + \sqrt{\frac{\log(1/\eta)}{2n}} \\
    &= \mathcal{E}(\rho_n) + \Gamma^\star +  \frac {\KL{\rho^\star}{\rho_0} + \log(1/\eta)}{\lambda n} + \frac{\lambda}{8} + \sqrt{\frac{\log(1/\eta)}{2n}}.
\end{aligned}
\end{equation*}
This proves the stated high-probability bound.

Finally, minimizing the right-hand side in $\lambda$ gives $\lambda = O(n^{-1/2})$ for fixed $\eta$ and $\KL{\rho^\star}{\rho_0}$. All other $\lambda$-related terms are in $O(n^{-1/2})$ accordingly. \hfill\Halmos
\endproof

\newpage
\section{Experimental Details}
\subsection{Omitted Algorithms}\label{EC_algorithm}
We present the two algorithms mentioned in Section~\ref{sec:5} here.
\begin{algorithm}
\caption{Mean-Field Variational Inference Gradient Descent (Gaussian / Laplace)}\label{algo:1}
\begin{algorithmic}[1]
\SingleSpacedXI
\small
\Procedure{MeanFieldVI}{$\mathcal{D}_n, \lambda, \alpha, \rho_0, T, \mathcal{Q}$}
    \State $\{(\mu_j, \sigma_j)\}_{j=1}^{d} \gets$ Initialization
    \For{$t \gets 1$ \textbf{to} $T$}
        \\
        \State \( 
        \xi_t \gets
        \begin{cases}
            u \sim \mathcal{N}(0, \mathbf{I}_d), & \text{if } \mathcal{Q} = \textsc{Gaussian} \\
            -\mathrm{sign}(u)\log(1 - \abs{u}),\, u \sim \mathrm{Uniform}(-1,1)^d, & \text{if } \mathcal{Q} = \textsc{Laplace}
        \end{cases}
        \)
        \\
        \State $\theta_t \gets \mu + \xi_t \circ \sigma$
        \State \(
        J(\theta_t) \gets \frac{1}{n}\sum_{i=1}^{n} c(\pi_{\theta_t}(x_i); y_i) + \frac{1}{\lambda n}\lrb{\log q_{\mu,\sigma}(\theta_t) - \log \rho_0(\theta_t)}
        \)
        \State $\mu_j \gets \mu_j - \alpha \cdot \nabla_{\theta_t} J(\theta_t)$
        \State $\sigma_j \gets \sigma_j - \alpha \,\xi_{j,t} \circ \nabla_{\theta_t} J(\theta_t)$
    \EndFor
    \State \Return $\{(\mu_j, \sigma_j)\}_{j=1}^{d}$
\EndProcedure
\end{algorithmic}
\end{algorithm}

\begin{algorithm}[htbp]
\caption{Adaptive Metropolis-Hastings for Nondifferentiable Problems}\label{algo:2}
\begin{algorithmic}[1]
\SingleSpacedXI
\small
\Procedure{AdaptiveMH}{$\mathcal{D}_n, \lambda, \rho_0, T_b, T, A^\star, \theta_0, v_0, s_0$}
\State $\Theta_{\mathrm{MH}} \gets \{\}, \; \hat{\mu} \gets \theta_{0}, \; \hat{v} \gets v_0$
\For{$t = 1, 2, \ldots, T$}                            
\State $\alpha_t = ( 1 + t)^{-0.6}$              
\State $\theta^p_{t} \gets \theta \sim \mathcal{N}(\theta_{t-1}, s_{t-1}^2 v_{t-1})$
\State 
\(
M_t \gets  L_\lambda(\mathcal{D}_n \mid \theta^p_t) /L_\lambda(\mathcal{D}_n \mid \theta_{t-1}) \cdot  \rho_0(\theta^p_t) /  \rho_0(\theta_{t-1}),  \; \text{Draw}\; a \sim \mathrm{Uniform}(0,1)
\)
\If{$a \leq M_t$} 
\State $\theta_t = \theta_t^p, \quad A_t = 1.$
\Else
\State $\theta_t = \theta_{t-1},\quad A_t = 0.$
\EndIf
\If{$t \leq T_b$}
\State  $\hat{\mu}_{t} \gets \hat{\mu}_{t-1} + \alpha_t(\theta_{t} - \hat{\mu}_{t-1})$
\State  $\hat{v}_{t} \gets ( 1-\alpha_t)\hat{v}_{t-1} + \alpha_t(\theta_{t} - \hat{\mu}_{t-1})^{\circ 2}$
\State  $ s_t \gets s_{t-1}\exp\lra{\alpha_t(A_t - 0.234)}$.
\Else 
\State  $\Theta_{\mathrm{MH}} \gets \Theta_{\mathrm{MH}} \cup \{\theta_{t}\}$.
\EndIf
\EndFor
\State \Return $\Theta_{\mathrm{MH}}$
\EndProcedure
\end{algorithmic}
\end{algorithm}

\subsection{Experiment 1}\label{ECEXP1}
For completeness, we present the details of the synthetic problem proposed by~\cite{Bertsimas-2020-MS-EC}. In this problem, the complete cost function is
\begin{align*}
    c(z;y) &= c_q^\top z + Q(z,y), \\
    Q(z,y) &= \min_{z_l, z_t \geq 0}\lrc{c_l^\top z_l + \sum_{i=1}^{I}\sum_{j=1}^{J} c_{t,ij}z_{t,ij},
    \sum_{i=1}^{I} z_{t,ij} \geq y_{j}\; \forall j, \sum_{j=1}^{J} z_{t,ij} \leq z_{i} + z_{l,i} \; \forall i
    },
\end{align*}
where $c_q = (5,5,5,5)^\top \in \R^4, c_l = (100, 100, \dots, 100)^\top \in \R^{12}$. The distance matrix is 
\begin{equation*}
    c_t = \begin{pmatrix}
0.15 & 1.3124 & 1.85 & 1.3124 \\
0.50026 & 0.93408 & 1.7874 & 1.6039 \\
0.93408 & 0.50026 & 1.6039 & 1.7874 \\
1.3124 & 0.15 & 1.3124 & 1.85 \\
1.6039 & 0.50026 & 0.93408 & 1.7874 \\
1.7874 & 0.93408 & 0.50026 & 1.6039 \\
1.85 & 1.3124 & 0.15 & 1.3124 \\
1.7874 & 1.6039 & 0.50026 & 0.93408 \\
1.6039 & 1.7874 & 0.93408 & 0.50026 \\
1.3124 & 1.85 & 1.3124 & 0.15 \\
0.93408 & 1.7874 & 1.6039 & 0.50026 \\
0.50026 & 1.6039 & 1.7874 & 0.93408
\end{pmatrix}.
\end{equation*}
Furthermore, the 3-dimensional ARMA(2,2) process is used to generate the feature vector $X$:
\begin{equation*}
    X(t) = \Phi_1 X(t-1) + \Phi_2 X(t-2) + \Theta_1 U(t-1) + \Theta_2 U(t-2) + U(t),
\end{equation*}
where $U(t) \sim \mathcal{N}(0, \Sigma_U)$ are i.i.d. innovations, and 
\begin{align*}
&\Phi_1 =
\begin{pmatrix}
0.5 & -0.9 & 0 \\
1.1 & -0.7 & 0 \\
0 & 0 & 0.5
\end{pmatrix},
\quad
\Phi_2 =
\begin{pmatrix}
0. & -0.5 & 0 \\
0.5 & 0 & 0 \\
0 & 0 & 0
\end{pmatrix},\\
&\Theta_1 =
\begin{pmatrix}
0.4 & 0.8 & 0 \\
-1.1 & -0.3 & 0 \\
0 & 0 & 0
\end{pmatrix},
\quad
\Theta_2 =
\begin{pmatrix}
0 & -0.8 & 0 \\
-1.1 & 0 & 0 \\
0 & 0 & 0
\end{pmatrix}, 
\Sigma_U =
\begin{pmatrix}
1. & 0.5 & 0 \\
0.5 & 1.2 & 0.5 \\
0 & 0.5 & 0.8
\end{pmatrix}.
\end{align*}
When generating all the training, validation, and testing sets, we randomly draw $X(0)$ and $X(1)$ from $\mathcal{N}(0, \mathbf{I}_3)$, and continue to generate future states $X(t), t > 2$.

The following factor model is employed to generate $Y$ for each $X$:
\begin{equation*}
    Y = \max\lrc{ 0, A^\top (X+\delta /4) + (B^\top X)\epsilon},
\end{equation*}
where $\delta$ and $\epsilon$ are independent standard Gaussian samples, and
\begin{equation*}
    A = 2.5 \times
\begin{pmatrix}
0.8 & 0.1 & 0.1 \\
0.1 & 0.8 & 0.1 \\
0.1 & 0.1 & 0.8 \\
0.8 & 0.1 & 0.1 \\
0.1 & 0.8 & 0.1 \\
0.1 & 0.1 & 0.8 \\
0.8 & 0.1 & 0.1 \\
0.1 & 0.8 & 0.1 \\
0.1 & 0.1 & 0.8 \\
0.8 & 0.1 & 0.1 \\
0.1 & 0.8 & 0.1 \\
0.1 & 0.1 & 0.8
\end{pmatrix},
\quad
B = 7.5 \times
\begin{pmatrix}
0 & -1 & -1 \\
-1 & 0 & -1 \\
-1 & -1 & 0 \\
0 & -1 & 1 \\
-1 & 0 & 1 \\
-1 & 1 & 0 \\
0 & 1 & -1 \\
1 & 0 & -1 \\
1 & -1 & 0 \\
0 & 1 & 1 \\
1 & 0 & 1 \\
1 & 1 & 0
\end{pmatrix}.
\end{equation*}
The exponential Nadaraya-Watson kernel estimator prescribes the decision for a new context $x_\mathrm{new}$ as follows:
\begin{align*}
    \argmin_{z \geq 0}& \quad c_q^\top z + \sum_{i=1}^{n} w^{\mathrm{NW}}_i Q(z,y_i), \\
    \text{s.t.} &\quad  w^{\mathrm{NW}}_i = \frac{\exp\lra{-\norm{x_\mathrm{new} - x_i}_2/h}}{
    \sum_{j=1}^{n} \exp\lra{-\norm{x_\mathrm{new} - x_j}_2/h}
    }.
\end{align*}
We allow the bandwidth $h$ to take values in $\{0.10, 0.15, 0.20,\dots, 0.50\}$, and we choose the bandwidth that achieves the lowest validation loss for out-of-sample evaluation.

For both IEO and BCO, the cost gradient with respect to a prediction $\hat{y}_i$ is derived as follows:
\begin{align*}
    \hat{z}_i &= \argmin_{z\geq 0}\; c_q^\top z + Q(z,\hat{y}_i), \\
    \nabla_{\hat{y}_i}c(\hat{z}_i; y_i) &= \nabla_{\hat{y}_i} \lra{ c_q^\top \hat{z}_i + Q(\hat{z}_i, y_i) }.
\end{align*}
To train IEO, we use the \texttt{SGD} optimizer with learning rate 0.1 and batch size 50, and employ early stopping using the validation set with patience of 20 epochs. To train BCO, we also use the \texttt{SGD} optimizer with learning rate 0.1 and batch size 50, and set $\lambda n = C_\lambda (n/50)^{0.5}$ where $C_\lambda = 0.01$. This value is chosen by sweeping several values of $C_\lambda$. Similarly, we early stop the training of BCO until no further improvement on the validation set is observed for 20 epochs. We set the number of scenarios $K = 100$ for BCP in this experiment. The prior is set independently as $\mathcal{N}(0,1)$ for each parameter in the piecewise affine model.
\subsection{Experiment 2}\label{ECEXP2}
The factor vectors in this experiment are:
\begin{equation*}
\begin{aligned}
    A &= \lra{0.409, 0.908, 0, 0, -2.061, 0.254, 3.024, 1.280}^\top, \\
    B &= \lra{ 1.386, -0.902, 5.437, 0, 0, -0.482, 4.611, 0}^\top.
\end{aligned}
\end{equation*}
    In this experiment, IEO is trained using the \texttt{Adam} optimizer with learning rate 0.01 and batch size 128. We implement early stopping with patience of 20 epochs. Moreover, we sweep regularization strength $\gamma \in \{10^{-6}, 10^{-5}, \dots, 10^{0}\}$ for IEO, and report the results for the model trained with the value of $\gamma$ that achieves the best performance over the validation set. Similarly, we use the same optimizer, learning rate, and batch size for BCO. The prior is set independently as $\mathrm{Laplace}(0,1)$ for each parameter in the neural network. We set $\lambda n = C_\lambda (n/256)^{0.5}$ and choose $C_\lambda = 10^{-6}$ for all DGPs, determined by sweeping several values to balance bias-variance on the validation set.

\subsection{Experiment 3}\label{ECEXP3}
We use a synthetic data-generating process based on the contextual portfolio optimization setting in~\cite{EC-SPO}. The model generates paired samples $(x_i,y_i)$ where $x_i \in \R^5$ is the contextual feature vector and $y_i \in \R^{50}$ is the realized return vector of 50 assets. To generate this dataset, we first sample a feature-loading matrix $B \in \R^{50 \times 5}$, where each entry is independently drawn from a Bernoulli distribution with probability 0.5. This matrix determines which contextual features affect the expected return of each asset. Next, a factor-loading matrix $E \in \R^{50 \times 4}$, whose entries are independently drawn from the uniform distribution $\mathrm{Uniform}(-0.0025 \tau, 0.0025\tau)$, is generated to produce the covariance matrix of the asset returns $\Sigma = E E^\top + (0.01\tau)^2\mathbf{I}_{50}$. In this work, we take $\tau =2$. Additionally, we fix $B$ and $E$ across all experiments once they are generated to ensure a consistent analysis across sample sizes.

For each observation $i=1,\dots,n$, the uncertain return vector $y_i$ is generated by:
\begin{equation*}
    y_i = \lra{ \frac{0.05}{\sqrt{5}} B x_i + 0.1^{1/p}}^{p} + F_i E^\top + \epsilon_i,
\end{equation*}
where $p$ is the polynomial degree and is set to 1 in this work, feature vector $x_i$ is drawn from $\mathcal{N}(0,\mathbf{I}_5)$, the latent factor shock $F_i$ is drawn from $\mathcal{N}(0,\mathbf{I}_4)$, and the idiosyncratic noise $\epsilon_i$ is drawn from $\mathcal{N}(0,(0.01\tau)^2\mathbf{I}_{50})$. In each independent experiment for a given sample size, we change the random seed to ensure variation across runs.

In this experiment, we demonstrate the benefit of distributional estimators by incorporating them into risk-averse optimization models, adopting a conditional value-at-risk (CVaR) model. The ENW-CVaR policy in this experiment can be formulated as follows for a new context $x_{\mathrm{new}}$:
\begin{equation*}
    \begin{aligned}
        \argmin_{z \geq 0}\min_{\eta, v}  \quad & z^\top \Sigma \, z  \\
        \text{s.t.}\quad & w^{\mathrm{NW}}_i = \frac{\exp\lra{-\norm{x_\mathrm{new} - x_i}_2/h}}{
            \sum_{j=1}^{n} \exp\lra{-\norm{x_\mathrm{new} - x_j}_2/h}}, \\
        & \eta + \frac{1}{1-\alpha} \sum_{i=1}^{n} v_iw^{\mathrm{NW}}_i \leq 0, \\
        & v_i \geq U_0  - y_i^\top z  - \eta, && i = 1,\dots, n,\\
        & v \geq 0, \; \bm{1}^\top z \leq 1.
    \end{aligned}
\end{equation*}
We sweep $h \in \{0.20, 0.25, 0.30, 0.35, 0.40\}$ for ENW. In Figure~\ref{fig:case3}, we report the bandwidth that achieves the best feasibility. Accordingly, $h=0.25$ is chosen when $n$ is 100 and 200; $h=0.20$ is chosen when $n = 400$.

Similarly, our BCP policy becomes:
\begin{equation*}
    \begin{aligned}
        \argmin_{z \geq 0}\min_{\eta, v}  \quad & z^\top \Sigma \, z  \\
        \text{s.t.}\quad & \eta + \frac{1}{1-\alpha} \sum_{k=1}^{K} v_kw_k \leq 0, \\
        & v_k \geq U_0  - y_k^\top z  - \eta, && k = 1,\dots, K,\\
        & v \geq 0, \; \bm{1}^\top z \leq 1,
    \end{aligned}
\end{equation*}
where $y_k$ are samples from the PPD and $w_k = 1 / K$ are their weights.

\begin{algorithm}[htbp]
\caption{Adaptive Metropolis-Hastings for Hard Constraint Problems}
\label{algo:3}
\begin{algorithmic}[1]
\OneAndAHalfSpacedXI
\small
\Procedure{FeasibleMH}{$\mathcal{D}_n, \lambda, \rho_0, T_b, T, \theta_0, v_0, s_0, F_0$}
\State $\Theta_{\mathrm{MH}} \gets \{\}, \; \hat{\mu} \gets \theta_{0}, \; \hat{v} \gets v_0$
\For{$t = 1, 2, \ldots, T$}                            
\State $\alpha_t = ( 1 + t)^{-0.6}$              
\State $\theta^p_{t} \gets \theta \sim \mathcal{N}(\theta_{t-1}, s_{t-1}^2 v_{t-1})$
\State 
\(
M_t \gets  L_\lambda(\mathcal{D}_n \mid \theta^p_t) /L_\lambda(\mathcal{D}_n \mid \theta_{t-1}) \cdot  \rho_0(\theta^p_t) /  \rho_0(\theta_{t-1}),  \; \text{Draw}\; a \sim \mathrm{Uniform}(0,1)
\)
\If{$a \leq M_t \mathbb{I}\{ F_n(\theta_t^p) \geq F_0 \}$} 
\State $\theta_t = \theta_t^p, \quad A_t = 1.$
\Else
\State $\theta_t = \theta_{t-1},\quad A_t = 0.$
\EndIf
\If{$t \leq T_b$}
\State  $\hat{\mu}_{t} \gets \hat{\mu}_{t-1} + \alpha_t(\theta_{t} - \hat{\mu}_{t-1})$
\State  $\hat{v}_{t} \gets ( 1-\alpha_t)\hat{v}_{t-1} + \alpha_t(\theta_{t} - \hat{\mu}_{t-1})^{\circ 2}$
\State  $ s_t \gets s_{t-1}\exp\lra{\alpha_t(A_t - 0.234)}$.
\Else 
\State  $\Theta_{\mathrm{MH}} \gets \Theta_{\mathrm{MH}} \cup \{\theta_{t}\}$.
\EndIf
\EndFor
\State \Return $\Theta_{\mathrm{MH}}$. 
\EndProcedure
\end{algorithmic}
\end{algorithm}

To accommodate the requirement of feasibility due to the hard constraint in this problem, we adjust the MH algorithm to take feasibility into consideration. The modified algorithm is summarized in Algorithm~\ref{algo:3}, in which we introduce a feasibility measure to accept or reject a sample. Its definition is given by:
\begin{equation*}
    F_n(\theta) \coloneqq \frac{1}{n} \sum_{i=1}^{n} \mathbb{I}\lrc{ y_i^\top \pi_\theta(x_i) \geq U_0 },
\end{equation*}
which measures the empirical marginal feasibility of a frequentist policy $\pi_\theta$ over the dataset $\mathcal{D}_n$. We only accept parameters that ensure $F_n(\theta) \geq F_0$, where $F_0$ is the preferred feasibility level. We set $F_0 = \alpha = 0.95$ in this experiment. To prioritize feasibility in this problem, we do not consider a gradual sequence for $\lambda n$ but fix $\lambda n = 1000$ for all sample sizes in this experiment. The burn-in length for each MCMC chain is 5,000, and the full-chain length is 15,000. We run four independent MCMC chains for BCP.

To the best of our knowledge, there is no decision-focused frequentist IEO framework that can address nonlinear problems with uncertainty in the constraints with strong theoretical guarantees. Therefore, we turn to the heuristic proposed by~\cite{EC-Mandi-2025-NIPS}, which we denote IEO in this experiment. This approach proposes two surrogate loss functions for decision quality (called OPL) and feasibility (called IPL), and constructs a convex combination of them to train a point predictor. When applying this approach to our problem, we found that it struggled to obtain an acceptable parameter vector. Therefore, we use MSE loss to warm-start this approach. We train the model until the empirical feasibility exceeds the target $\alpha=0.95$. Note that this approach supports only deterministic optimization and is not applicable to the CVaR model. We repeat this training procedure five times for each sample size and report the one with the highest feasibility on the test set.

\clearpage
\renewcommand{\refname}{References for the Appendix}
\putbib[ref_ec]
\end{bibunit}


\begin{thebibliography}{55}
\providecommand{\natexlab}[1]{#1}
\providecommand{\url}[1]{\texttt{#1}}
\expandafter\ifx\csname urlstyle\endcsname\relax
  \providecommand{\doi}[1]{doi: #1}\else
  \providecommand{\doi}{doi: \begingroup \urlstyle{rm}\Url}\fi

\bibitem[Agrawal et~al.(2026)Agrawal, Avadhanula, Goyal, and
  Zeevi]{Agrawal-2025-MOR}
Shipra Agrawal, Vashist Avadhanula, Vineet Goyal, and Assaf Zeevi.
\newblock Thompson sampling for the multinomial logit bandit.
\newblock \emph{Mathematics of Operations Research}, 51\penalty0 (1):\penalty0
  568--590, 2026.

\bibitem[Alquier(2024)]{Alquier-2024-BOOK}
Pierre Alquier.
\newblock User-friendly introduction to {PAC-Bayes} bounds.
\newblock \emph{Foundations and Trends in Machine Learning}, 17\penalty0
  (2):\penalty0 174--303, 2024.

\bibitem[Anthony and Bartlett(1999)]{anthony-1999-book}
Martin Anthony and Peter~L. Bartlett.
\newblock \emph{Neural Network Learning: Theoretical Foundations}.
\newblock Cambridge University Press, Cambridge, UK, 1999.

\bibitem[Azoury and Miyaoka(2009)]{Azoury-2009-MS}
Katy~S Azoury and Julia Miyaoka.
\newblock Optimal policies and approximations for a {Bayesian} linear
  regression inventory model.
\newblock \emph{Management Science}, 55\penalty0 (5):\penalty0 813--826, 2009.

\bibitem[Ban and Rudin(2019)]{Ban-2019-OR}
Gah-Yi Ban and Cynthia Rudin.
\newblock The big data newsvendor: Practical insights from machine learning.
\newblock \emph{Operations Research}, 67\penalty0 (1):\penalty0 90--108, 2019.

\bibitem[Bartlett et~al.(2019)Bartlett, Harvey, Liaw, and
  Mehrabian]{Bartlett-2019-JMLR}
Peter~L. Bartlett, Nick Harvey, Christopher Liaw, and Abbas Mehrabian.
\newblock Nearly-tight {VC}-dimension and pseudo-dimension bounds for piecewise
  linear neural networks.
\newblock \emph{Journal of Machine Learning Research}, 20\penalty0
  (63):\penalty0 1--17, 2019.

\bibitem[Bertsimas and Kallus(2020)]{Bertsimas-2020-MS}
Dimitris Bertsimas and Nathan Kallus.
\newblock From predictive to prescriptive analytics.
\newblock \emph{Management Science}, 66\penalty0 (3):\penalty0 1025--1044,
  2020.

\bibitem[Bertsimas and Koduri(2022)]{Bertsimas-2022-OR}
Dimitris Bertsimas and Nihal Koduri.
\newblock Data-driven optimization: A reproducing kernel {Hilbert} space
  approach.
\newblock \emph{Operations Research}, 70\penalty0 (1):\penalty0 454--471, 2022.

\bibitem[Bertsimas and Mundru(2023)]{Bertsimas-2023-OR}
Dimitris Bertsimas and Nishanth Mundru.
\newblock Optimization-based scenario reduction for data-driven two-stage
  stochastic optimization.
\newblock \emph{Operations Research}, 71\penalty0 (4):\penalty0 1343--1361,
  2023.

\bibitem[Bertsimas and Van~Parys(2022)]{Bertsimas-2022-MP}
Dimitris Bertsimas and Bart Van~Parys.
\newblock Bootstrap robust prescriptive analytics.
\newblock \emph{Mathematical Programming}, 195\penalty0 (1):\penalty0 39--78,
  2022.

\bibitem[Bissiri et~al.(2016)Bissiri, Holmes, and Walker]{Bissiri-2016-JRSSB}
P.~G. Bissiri, C.~C. Holmes, and S.~G. Walker.
\newblock A general framework for updating belief distributions.
\newblock \emph{Journal of the Royal Statistical Society Series B: Statistical
  Methodology}, 78\penalty0 (5):\penalty0 1103--1130, 2016.

\bibitem[Blei et~al.(2017)Blei, Kucukelbir, and McAuliffe]{Blei-2017-JASA}
David~M Blei, Alp Kucukelbir, and Jon~D McAuliffe.
\newblock Variational inference: A review for statisticians.
\newblock \emph{Journal of the American Statistical Association}, 112\penalty0
  (518):\penalty0 859--877, 2017.

\bibitem[Chen and Ryzhov(2020)]{Chen-2020-OR}
Ye~Chen and Ilya~O Ryzhov.
\newblock Consistency analysis of sequential learning under approximate
  {Bayesian} inference.
\newblock \emph{Operations Research}, 68\penalty0 (1):\penalty0 295--307, 2020.

\bibitem[Dias~Garcia et~al.(2025)Dias~Garcia, Street, Homem-de Mello, and
  Mu{\~n}oz]{Dias-2025-OR}
Joaquim Dias~Garcia, Alexandre Street, Tito Homem-de Mello, and Francisco~D
  Mu{\~n}oz.
\newblock Application-driven learning: A closed-loop prediction and
  optimization approach applied to dynamic reserves and demand forecasting.
\newblock \emph{Operations Research}, 73\penalty0 (1):\penalty0 22--39, 2025.

\bibitem[Elmachtoub and Grigas(2022)]{SPO}
Adam~N Elmachtoub and Paul Grigas.
\newblock Smart “predict, then optimize”.
\newblock \emph{Management Science}, 68\penalty0 (1):\penalty0 9--26, 2022.

\bibitem[Elmachtoub et~al.(2023)Elmachtoub, Lam, Zhang, and
  Zhao]{Elmachtoub-2023-ARXIV}
Adam~N Elmachtoub, Henry Lam, Haofeng Zhang, and Yunfan Zhao.
\newblock Estimate-then-optimize versus integrated-estimation-optimization
  versus sample average approximation: a stochastic dominance perspective.
\newblock \emph{arXiv preprint arXiv:2304.06833}, 2023.

\bibitem[Elmachtoub et~al.(2025)Elmachtoub, Lam, Lan, and
  Zhang]{Elmachtoub-2025-AISTATS}
Adam~N. Elmachtoub, Henry Lam, Haixiang Lan, and Haofeng Zhang.
\newblock Dissecting the impact of model misspecification in data-driven
  optimization.
\newblock In \emph{Internat. Conf. Artificial Intelligence Statist.}, volume
  258, pages 1594--1602. PMLR, 2025.

\bibitem[Garthwaite et~al.(2016)Garthwaite, Fan, and
  Sisson]{Garthwaite-2016-CSTM}
Paul~H Garthwaite, Yanan Fan, and Scott~A Sisson.
\newblock Adaptive optimal scaling of {Metropolis--Hastings} algorithms using
  the {Robbins--Monro} process.
\newblock \emph{Communications in Statistics-Theory and Methods}, 45\penalty0
  (17):\penalty0 5098--5111, 2016.

\bibitem[Goldberg and Jerrum(1995)]{Goldberg-1995-ML}
Paul~W. Goldberg and Mark~R. Jerrum.
\newblock Bounding the vapnik-chervonenkis dimension of concept classes
  parameterized by real numbers.
\newblock \emph{Machine Learning}, 18\penalty0 (2-3):\penalty0 131--–148,
  1995.

\bibitem[Han et~al.(2025)Han, Hu, and Shen]{Han-2025-MS}
Jinhui Han, Ming Hu, and Guohao Shen.
\newblock Deep neural newsvendor.
\newblock \emph{Management Science}, Articles in Advance, 2025.

\bibitem[Harrison et~al.(2012)Harrison, Keskin, and Zeevi]{Harrison-2012-MS}
J~Michael Harrison, N~Bora Keskin, and Assaf Zeevi.
\newblock {Bayesian} dynamic pricing policies: Learning and earning under a
  binary prior distribution.
\newblock \emph{Management Science}, 58\penalty0 (3):\penalty0 570--586, 2012.

\bibitem[Henrion and R{\"o}misch(2022)]{Henrion-2022-MP}
R{\'e}ne Henrion and Werner R{\"o}misch.
\newblock Problem-based optimal scenario generation and reduction in stochastic
  programming.
\newblock \emph{Mathematical Programming}, 191\penalty0 (1):\penalty0 183--205,
  2022.

\bibitem[Ho-Nguyen and K{\i}l{\i}n{\c{c}}-Karzan(2022)]{Ho-Nguyen-2022-MS}
Nam Ho-Nguyen and Fatma K{\i}l{\i}n{\c{c}}-Karzan.
\newblock Risk guarantees for end-to-end prediction and optimization processes.
\newblock \emph{Management Science}, 68\penalty0 (12):\penalty0 8680--8698,
  2022.

\bibitem[Hu et~al.(2022)Hu, Kallus, and Mao]{Hu-2022-MS}
Yichun Hu, Nathan Kallus, and Xiaojie Mao.
\newblock Fast rates for contextual linear optimization.
\newblock \emph{Management Science}, 68\penalty0 (6):\penalty0 4236--4245,
  2022.

\bibitem[Jiang and Tanner(2008)]{Wenxin-2008-AS}
Wenxin Jiang and Martin~A. Tanner.
\newblock {Gibbs posterior for variable selection in high-dimensional
  classification and data mining}.
\newblock \emph{The Annals of Statistics}, 36\penalty0 (5):\penalty0
  2207--2231, 2008.

\bibitem[Kallus and Mao(2023)]{Kallus-2023-MS}
Nathan Kallus and Xiaojie Mao.
\newblock Stochastic optimization forests.
\newblock \emph{Management Science}, 69\penalty0 (4):\penalty0 1975--1994,
  2023.

\bibitem[Kannan et~al.(2025)Kannan, Bayraksan, and Luedtke]{Kannan-2025-OR}
Rohit Kannan, G{\"u}zin Bayraksan, and James~R Luedtke.
\newblock Data-driven sample average approximation with covariate information.
\newblock \emph{Operations Research}, 73\penalty0 (6):\penalty0 3245--3259,
  2025.

\bibitem[Lin et~al.(2022)Lin, Chen, Li, and Shen]{Lin-2022-POMS}
Shaochong Lin, Youhua Chen, Yanzhi Li, and Zuo-Jun~Max Shen.
\newblock Data-driven newsvendor problems regularized by a profit risk
  constraint.
\newblock \emph{Production and Operations Management}, 31\penalty0
  (4):\penalty0 1630--1644, 2022.

\bibitem[Luo et~al.(2023)Luo, Guo, and Wang]{Luo-2023-MSOM}
Zhenwei Luo, Pengfei Guo, and Yulan Wang.
\newblock Manage inventories with learning on demands and buy-up substitution
  probability.
\newblock \emph{Manufacturing \& Service Operations Management}, 25\penalty0
  (2):\penalty0 563--580, 2023.

\bibitem[Mak and Joseph(2018)]{Mak-2018-AOS}
Simon Mak and V~Roshan Joseph.
\newblock Support points.
\newblock \emph{The Annals of Statistics}, 46\penalty0 (6A):\penalty0
  2562--2592, 2018.

\bibitem[Mandi et~al.(2024)Mandi, Kotary, Berden, Mulamba, Bucarey, Guns, and
  Fioretto]{DFLreview}
Jayanta Mandi, James Kotary, Senne Berden, Maxime Mulamba, Victor Bucarey, Tias
  Guns, and Ferdinando Fioretto.
\newblock Decision-focused learning: Foundations, state of the art, benchmark
  and future opportunities.
\newblock \emph{Journal of Artificial Intelligence Research}, 80:\penalty0
  1623--1701, 2024.

\bibitem[Mandi et~al.(2025)Mandi, Defresne, Berden, and Guns]{Mandi-2025-NIPS}
Jayanta Mandi, Marianne Defresne, Senne Berden, and Tias Guns.
\newblock Feasibility-aware decision-focused learning for predicting parameters
  in the constraints.
\newblock In \emph{{Adv. Neural Inform. Processing Systems}}, volume~38, pages
  163285--163309, Red Hook, NY, 2025. Curran Associates, Inc.

\bibitem[Murphy(2012)]{Murphy-2012-book}
Kevin~P. Murphy.
\newblock \emph{Machine Learning: A Probabilistic Perspective}.
\newblock MIT Press, Cambridge, MA, 2012.

\bibitem[Nott et~al.(2023)Nott, Drovandi, and Frazier]{Nott-2023-ARSA}
David~J Nott, Christopher Drovandi, and David~T Frazier.
\newblock {Bayesian} inference for misspecified generative models.
\newblock \emph{Annual Review of Statistics and Its Application}, 11:\penalty0
  179--202, 2023.

\bibitem[Qi et~al.(2023)Qi, Shi, Qi, Ma, Yuan, Wu, and Shen]{Qi-2023-MS}
Meng Qi, Yuanyuan Shi, Yongzhi Qi, Chenxin Ma, Rong Yuan, Di~Wu, and Zuo-Jun
  Shen.
\newblock A practical end-to-end inventory management model with deep learning.
\newblock \emph{Management Science}, 69\penalty0 (2):\penalty0 759--773, 2023.

\bibitem[Qi et~al.(2025)Qi, Grigas, and Shen]{Qi-2025-OR}
Meng Qi, Paul Grigas, and Zuo-Jun Shen.
\newblock Integrated conditional estimation-optimization.
\newblock \emph{Operations Research}, 74\penalty0 (3):\penalty0 1604--1625,
  2025.

\bibitem[Rahimian and Pagnoncelli(2023)]{Rahimian-2023-SIAMOPT}
Hamed Rahimian and Bernardo Pagnoncelli.
\newblock Data-driven approximation of contextual chance-constrained stochastic
  programs.
\newblock \emph{SIAM Journal on Optimization}, 33\penalty0 (3):\penalty0
  2248--2274, 2023.

\bibitem[Ribeiro and Fanzeres(2026)]{Rafaela-2026-EJOR}
Rafaela Ribeiro and Bruno Fanzeres.
\newblock Integrated estimate-and-optimize decision trees learning for
  two-stage linear decision-making problems.
\newblock \emph{European Journal of Operational Research}, 329\penalty0
  (2):\penalty0 607--628, 2026.

\bibitem[Roberts and Rosenthal(2001)]{Roberts-2001-SS}
Gareth~O Roberts and Jeffrey~S Rosenthal.
\newblock Optimal scaling for various {Metropolis-Hastings} algorithms.
\newblock \emph{Statistical Science}, 16\penalty0 (4):\penalty0 351--367, 2001.

\bibitem[Rockafellar and Wets(1998)]{Rockafellar-2009-book}
R.~Tyrrell Rockafellar and Roger J.-B. Wets.
\newblock \emph{Variational Analysis}, volume 317 of \emph{Grundlehren der
  mathematischen Wissenschaften}.
\newblock Springer, Berlin, Heidelberg, 1998.
\newblock Corrected 3rd printing, 2009.

\bibitem[Ro{\v{c}}kov{\'a} and van~der Pas(2020)]{Rockova-2020-AOS}
Veronika Ro{\v{c}}kov{\'a} and St{\'e}phanie van~der Pas.
\newblock Posterior concentration for {B}ayesian regression trees and forests.
\newblock \emph{The Annals of Statistics}, 48\penalty0 (4):\penalty0
  2108--2131, 2020.

\bibitem[Sadana et~al.(2025)Sadana, Chenreddy, Delage, Forel, Frejinger, and
  Vidal]{EJORReview}
Utsav Sadana, Abhilash Chenreddy, Erick Delage, Alexandre Forel, Emma
  Frejinger, and Thibaut Vidal.
\newblock A survey of contextual optimization methods for decision-making under
  uncertainty.
\newblock \emph{European Journal of Operational Research}, 320\penalty0
  (2):\penalty0 271--289, 2025.

\bibitem[Satop{\"a}{\"a}(2022)]{Satop-2022-OR}
Ville~A Satop{\"a}{\"a}.
\newblock Regularized aggregation of one-off probability predictions.
\newblock \emph{Operations Research}, 70\penalty0 (6):\penalty0 3558--3580,
  2022.

\bibitem[Shapiro et~al.(2023)Shapiro, Zhou, and Lin]{Shapiro-2023-SIAMOPT}
Alexander Shapiro, Enlu Zhou, and Yifan Lin.
\newblock {Bayesian} distributionally robust optimization.
\newblock \emph{SIAM Journal on Optimization}, 33\penalty0 (2):\penalty0
  1279--1304, 2023.

\bibitem[Shen et~al.(2024)Shen, Jiao, Lin, Horowitz, and Huang]{Shen-2024-JMLR}
Guohao Shen, Yuling Jiao, Yuanyuan Lin, Joel~L Horowitz, and Jian Huang.
\newblock Nonparametric estimation of non-crossing quantile regression process
  with deep {ReQU} neural networks.
\newblock \emph{Journal of Machine Learning Research}, 25\penalty0
  (88):\penalty0 1--75, 2024.

\bibitem[Sim et~al.(2025)Sim, Tang, Zhou, and Zhu]{Sim-2025-OR}
Melvyn Sim, Qinshen Tang, Minglong Zhou, and Taozeng Zhu.
\newblock The analytics of robust satisficing: Predict, optimize, satisfice,
  then fortify.
\newblock \emph{Operations Research}, 73\penalty0 (5):\penalty0 2708--2728,
  2025.

\bibitem[South et~al.(2022)South, Riabiz, Teymur, and Oates]{South-2022-ARSA}
Leah~F South, Marina Riabiz, Onur Teymur, and Chris~J Oates.
\newblock Postprocessing of {MCMC}.
\newblock \emph{Annual Review of Statistics and Its Application}, 9:\penalty0
  529--555, 2022.

\bibitem[Spokoiny(2012)]{Spokoiny-2012-AOS}
Vladimir Spokoiny.
\newblock Parametric estimation. {F}inite sample theory.
\newblock \emph{The Annals of Statistics}, 40\penalty0 (6):\penalty0
  2603--2655, 2012.

\bibitem[Sriram et~al.(2013)Sriram, Ramamoorthi, and Ghosh]{Sriram-2013-BA}
Karthik Sriram, R.V. Ramamoorthi, and Pulak Ghosh.
\newblock Posterior consistency of {Bayesian} quantile regression based on the
  misspecified asymmetric {Laplace} density.
\newblock \emph{Bayesian Analysis}, 8\penalty0 (2):\penalty0 479--504, 2013.

\bibitem[Sun et~al.(2023)Sun, Liu, and Li]{Sun-2023-ICML}
Chunlin Sun, Shang Liu, and Xiaocheng Li.
\newblock Maximum optimality margin: A unified approach for contextual linear
  programming and inverse linear programming.
\newblock In \emph{Internat. Conf. Machine Learning}, volume 202, pages
  32886--32912. PMLR, 2023.

\bibitem[Wainwright(2019)]{Wainwright-2019-Book}
Martin~J. Wainwright.
\newblock \emph{High-Dimensional Statistics: A Non-Asymptotic Viewpoint}.
\newblock Cambridge Series in Statistical and Probabilistic Mathematics.
  Cambridge University Press, Cambridge, 2019.

\bibitem[Wang et~al.(2026)Wang, Srivastava, Hanasusanto, and
  Ho]{Wang-2026-MSOM}
Yijie Wang, Prateek~R. Srivastava, Grani~A. Hanasusanto, and Chin~Pang Ho.
\newblock On data-driven prescriptive analytics with side information: A
  regularized {Nadaraya--Watson} approach.
\newblock \emph{Manufacturing \& Service Operations Management}, 28\penalty0
  (3):\penalty0 841--859, 2026.

\bibitem[Wu et~al.(2018)Wu, Zhu, and Zhou]{Wu-2018-SIAMOPT}
Di~Wu, Helin Zhu, and Enlu Zhou.
\newblock A {Bayesian} risk approach to data-driven stochastic optimization:
  Formulations and asymptotics.
\newblock \emph{SIAM Journal on Optimization}, 28\penalty0 (2):\penalty0
  1588--1612, 2018.

\bibitem[Wu and Martin(2023)]{Wu-2023-BA}
Pei-Shien Wu and Ryan Martin.
\newblock A comparison of learning rate selection methods in generalized
  {Bayesian} inference.
\newblock \emph{Bayesian Analysis}, 18\penalty0 (1):\penalty0 105--132, 2023.

\bibitem[Yu and Moyeed(2001)]{Yu-2001-SPL}
Keming Yu and Rana~A Moyeed.
\newblock Bayesian quantile regression.
\newblock \emph{Statistics \& Probability Letters}, 54\penalty0 (4):\penalty0
  437--447, 2001.

\end{thebibliography}


\begin{thebibliography}{9}
\providecommand{\natexlab}[1]{#1}
\providecommand{\url}[1]{\texttt{#1}}
\expandafter\ifx\csname urlstyle\endcsname\relax
  \providecommand{\doi}[1]{doi: #1}\else
  \providecommand{\doi}{doi: \begingroup \urlstyle{rm}\Url}\fi

\bibitem[Alquier(2024)]{EC-Alquier-2024-BOOK}
Pierre Alquier.
\newblock User-friendly introduction to {PAC-Bayes} bounds.
\newblock \emph{Foundations and Trends in Machine Learning}, 17\penalty0
  (2):\penalty0 174--303, 2024.

\bibitem[Anthony and Bartlett(1999)]{EC-anthony-1999-book}
Martin Anthony and Peter~L. Bartlett.
\newblock \emph{Neural Network Learning: Theoretical Foundations}.
\newblock Cambridge University Press, Cambridge, UK, 1999.

\bibitem[Bertsimas and Kallus(2020)]{Bertsimas-2020-MS-EC}
Dimitris Bertsimas and Nathan Kallus.
\newblock From predictive to prescriptive analytics.
\newblock \emph{Management Science}, 66\penalty0 (3):\penalty0 1025--1044,
  2020.

\bibitem[Bissiri et~al.(2016)Bissiri, Holmes, and
  Walker]{EC-Bissiri-2016-JRSSB}
P.~G. Bissiri, C.~C. Holmes, and S.~G. Walker.
\newblock A general framework for updating belief distributions.
\newblock \emph{Journal of the Royal Statistical Society Series B: Statistical
  Methodology}, 78\penalty0 (5):\penalty0 1103--1130, 2016.

\bibitem[Boucheron et~al.(2013)Boucheron, Lugosi, and
  Massart]{Boucheron-2013-book}
Stéphane Boucheron, Gábor Lugosi, and Pascal Massart.
\newblock \emph{Concentration Inequalities: A Nonasymptotic Theory of
  Independence}.
\newblock Oxford University Press, 02 2013.

\bibitem[Elmachtoub and Grigas(2022)]{EC-SPO}
Adam~N. Elmachtoub and Paul Grigas.
\newblock Smart ``predict, then optimize''.
\newblock \emph{Management Science}, 68\penalty0 (1):\penalty0 9--26, 2022.

\bibitem[Mandi et~al.(2025)Mandi, Defresne, Berden, and
  Guns]{EC-Mandi-2025-NIPS}
Jayanta Mandi, Marianne Defresne, Senne Berden, and Tias Guns.
\newblock Feasibility-aware decision-focused learning for predicting parameters
  in the constraints.
\newblock In \emph{{Adv. Neural Inform. Processing Systems}}, volume~38, pages
  163285--163309, Red Hook, NY, 2025. Curran Associates, Inc.

\bibitem[van~der Vaart and Wellner(1996)]{vandervaart-1996-book}
Aad~W. van~der Vaart and Jon~A. Wellner.
\newblock \emph{Weak Convergence and Empirical Processes}.
\newblock Springer Series in Statistics. Springer New York, New York, NY, 1996.

\bibitem[Villani(2009)]{OptimalTransport}
Cédric Villani.
\newblock \emph{Optimal transport: old and new}.
\newblock Springer, Berlin, 2009.

\end{thebibliography}
\end{document}